\let\leq\leqslant
\let\geq\geqslant
\let\setminus\smallsetminus
\let\rho\varrho
\let\implies\Rightarrow
\let\iff\Leftrightarrow
\newcommand{\brac}[1]{{\left(#1\right)}}
\newcommand{\sbrac}[1]{{\left[#1\right]}}
\newcommand{\tbrac}[1]{{\left<#1\right>}}
\newcommand{\set}[1]{\left\{#1\right\}}
\newcommand{\norm}[1]{{\left|#1\right|}}
\newcommand{\Oh}[1]{O\brac{#1}}
\newtheorem{theorem}{Theorem}
\newtheorem{corollary}[theorem]{Corollary}
\newtheorem{lemma}[theorem]{Lemma}
\newtheorem{observation}[theorem]{Observation}
\author{Patryk Mikos\affiliationmark{1}\thanks{Research was partially supported by the National Science Center of Poland under grant no.~2014/14/A/ST6/00138.}}
\title{Efficient Enumeration of Non-isomorphic Interval Graphs}
\affiliation{
  Theoretical Computer Science Department,
  Faculty of Mathematics and Computer Science,
  Jagiellonian University, Krak\'ow, Poland}
\keywords{combinatorics, graph theory, enumeration, interval graphs}
\begin{document}
\publicationdetails{23}{2021}{1}{2}{6164}

\maketitle

\begin{abstract}
Recently, Yamazaki et al. provided an algorithm that enumerates all
non-isomorphic interval graphs on $n$ vertices with an $\Oh{n^4}$ time delay.
In this paper, we improve their algorithm and achieve $\Oh{n^3 \log n}$ time delay.
We also extend the catalog of these graphs providing a list of all non-isomorphic interval graphs for all $n$ up to $15$.
\end{abstract}

\section{Introduction}
\label{sec:Intro}

Graph enumeration problems, besides their theoretical value, are of interest not only for computer scientists,
but also to other fields, such as physics, chemistry, or biology.
Enumeration is helpful when we want to verify some hypothesis on a quite big set of different instances, or find a small counterexample.
For graphs it is natural to say that two graphs are "different" if they are non-isomorphic.
Many papers dealing with the problem of enumeration were published for certain graph classes, see \cite{Kiyomi06,Saitoh10,Saitoh12,Yamazaki18}.
A series of potential applications in molecular biology, DNA sequencing, network multiplexing, resource allocation, job scheduling,
and many other problems, makes the class of \emph{interval graphs}, i.e. intersection graphs of intervals on the real line,
a particularly interesting class of graphs.
In this paper, we focus on interval graphs,
and our goal is to find an efficient algorithm that for a given $n$ lists all non-isomorphic interval graphs on $n$ vertices.
It is well-known that the number of such graphs is roughly $n^{nc}$ for some constant $c$, see \cite{Huseyin18,Halon82,Yang17}.
For that reason, we measure the efficiency of the enumeration algorithm by the worst-case time delay between output of any two successive graphs.

\subsection{Previous work}
\cite{Yamazaki18} presented an enumeration algorithm for non-isomorphic interval graphs
that works with the worst-case time delay $\Oh{n^6}$, and recently \cite{Yamazaki19} improved it to $\Oh{n^4}$.
Their algorithm is based on the \emph{reverse search method}, invented by \cite{Avis96}, and in its general form works in the following way.
Let $\mathcal{C}$ be a family of graphs we want to enumerate, and let $G_1,\ldots,G_k \in \mathcal{C}$ be some special graphs called \emph{roots}.
We define a \emph{family forest} $\mathcal{F}$ which spans $\mathcal{C}$, and consists of $k$ rooted trees $\mathcal{T}_1,\ldots,\mathcal{T}_k$ such that the root of $\mathcal{T}_i$ is $G_i$.
For graphs that are not roots, let $par: \mathcal{C} \setminus \set{G_1,\ldots,G_k} \rightarrow \mathcal{C}$ be the \emph{parent function}.
In order to enumerate all graphs in the family $\mathcal{C}$, we enumerate all graphs that belong to each tree independently,
and to enumerate all graphs in the tree $\mathcal{T}_i$ we use any tree traversal algorithm like BFS or DFS.
The most time consuming operation in the tree traversal is computing the children of a graph $G$.
From the definition, children of $G$ are those graphs $G' \in \mathcal{C}$ whose parent is $G$.
Hence, if we want to design a fast enumeration algorithm that uses this technique, we need to carefully define the parent function.
\cite{Yamazaki19} used the fact that for every interval graph $G = \brac{V,E}$ that is not a complete graph,
there is at least one edge $e$ such that the graph $G_e = \brac{V,E \cup \set{e}}$ is also an interval graph.
They defined the only root $G_1$ to be a complete graph on $n$ vertices,
and $par\brac{G} = G_e$, where the edge $e$ is uniquely defined - for more details see \cite{Yamazaki19}.
The consequence of this approach is the fact that for every graph $G$ on $n$ vertices there are at most $\Oh{n^2}$ candidates for the children of $G$ in $\mathcal{F}$,
and so this number does not depend on the size of the enumerated family.
Moreover, the authors observed that in order to enumerate only non-isomorphic graphs, it is enough to filter out isomorphic copies from the set of children.
Isomorphism test in the class of interval graphs is an easy problem thanks to \emph{MPQ}-trees, see Section \ref{sec:Preliminaries}.
Hence, to compute the set of children for a graph $G = \brac{V,E}$, the authors consider all graphs $G_e = \brac{V,E \setminus \set{e}}$.
For each of them, they check whether $G_e$ is an interval graph using some linear time recognition algorithm.
Then, if $G_e$ is an interval graph, they check whether $G = par\brac{G_e}$, and build a corresponding \emph{MPQ}-tree.
Finally, they store a set of children trees, effectively removing all duplicates.

\subsection{Our results}
In this paper we revisit the work of Yamazaki et al. and show how to modify their enumeration algorithm
to significantly reduce the worst-case time delay between the output of two successive graphs.
Our key observation is the fact that having an \emph{MPQ}-tree corresponding to a graph $G = \brac{V,E}$
we are able to list all edges $e$ such that a graph $G_e = \brac{V,E \setminus \set{e}}$ is an interval graph.
Moreover, for each such edge we show how to build an \emph{MPQ}-tree corresponding to the graph $G_e$
without constructing it explicitly.

\subsection{Organization of this paper}
In the next section we introduce concepts and definitions that are widely used in this paper,
and also provide a detailed description of \emph{MPQ}-trees along with their most important properties.
In Section \ref{sec:CanonicalMPQ}, we present a total ordering over all \emph{MPQ}-trees, define a canonical \emph{MPQ}-tree using this ordering,
and also present a fast algorithm that for a given \emph{MPQ}-tree $\mathcal{T}$ computes its canonical form $\mathcal{T}'$.
In Section \ref{sec:IntervalEdges} we consider an \emph{MPQ}-tree $\mathcal{T}$ corresponding to an interval graph $G = \brac{V,E}$,
and characterize edges $e$ such that the graph $G_e = \brac{V,E \setminus \set{e}}$ is also an interval graph.
Moreover, for every edge $e$ we either show a linear time algorithm that produces a string representing $G_e$ if it is an interval graph,
or show an induced chordless cycle on four vertices or an asteroidal triple in $G_e$ that certifies that $G_e$ is not an interval graph.
In Section \ref{sec:Listing} we develop data structures and algorithms that make use of combinatorial characterization from Section \ref{sec:IntervalEdges}
and present a fast algorithm which for a given \emph{MPQ}-tree lists all edges $e$ such that $G_e$ is an interval graph.
Finally, in Section \ref{sec:ParentChild} we show how to combine all parts together and build the graph enumeration algorithm.
We also show the worst-case performance analysis of our algorithm in this section.
The last section contains a discussion of some implementation heuristics that do not change the worst-case analysis,
but significantly speedup the execution.

\section{Preliminaries}
\label{sec:Preliminaries}

In this paper we consider only simple graphs without loops and multiple edges.
We use the standard notations for graphs, so $n = \norm{V\brac{G}}$ and $m = \norm{E\brac{G}}$.
For a graph $G = \brac{V,E}$ and a pair of vertices $i,j \in V$,
we denote $G + \brac{i,j}$ a graph $G' = \brac{V,E \cup \set{\brac{i,j}}}$,
and $G - \brac{i,j}$ a graph $G' = \brac{V,E \setminus \set{\brac{i,j}}}$.
A graph $G = \brac{V,E}$ with a vertex set $V = \set{1,\ldots,n}$ is an \emph{interval graph}
if there is a set of intervals $\mathcal{I} = \set{I_1,\ldots,I_n}$ on the real line
such that $\brac{i,j} \in E$ iff $I_i \cap I_j \neq \emptyset$.
The set $\mathcal{I}$ is called an \emph{interval representation} of the graph $G$.
For an interval graph $G$, we say that an edge $\brac{i,j} \in E\brac{G}$ is an \emph{interval edge}
if $G - \brac{i,j}$ is also an interval graph.
A sequence $\mathcal{S}$ of length $2n$ is called a \emph{string representation}
if each element $x \in \set{1,\ldots,n}$ appears exactly two times in $\mathcal{S}$. 
Note that a string representation $\mathcal{S}$ encodes an interval graph in a natural way.
For every $x \in \set{1,\ldots,n}$ let $first\brac{x}$ denote the index of the first appearance of $x$ in $\mathcal{S}$, $second\brac{x}$ denote the second one, and $x$ is represented by an interval $I_x = \sbrac{first\brac{x},second\brac{x}}$.

\subsection{\emph{PQ}-trees}

It is easy to notice that an interval graph can have many different interval representations.
\cite{Lueker79} introduced a data structure, called a \emph{PQ}-tree,
which encodes all normalized interval representations of an interval graph.
A \emph{PQ}-tree is a rooted labeled plane tree composed of leaves and two kinds of internal nodes called \emph{P}-nodes, and \emph{Q}-nodes respectively.
The left to right ordering of the leaves of a \emph{PQ}-tree $T$ is called the \emph{frontier} of $T$.
We say that $T$ encodes an interval graph $G$,
if each maximal clique of the graph $G$ is stored in exactly one leaf of $T$,
and each vertex $v \in V\brac{G}$ belongs to a consecutive sequence of cliques in the frontier of $T$.
Having a \emph{PQ}-tree $T$ one can obtain another \emph{PQ}-tree $T'$ which is \emph{equivalent} to $T$
using the following two operations: arbitrarily permute the children of a \emph{P}-node, or reverse the order of the children of a \emph{Q}-node.
The crucial property of a \emph{PQ}-tree $T$ is the fact that for every permutation $\sigma$ of maximal cliques of the graph $G$
such that each vertex belongs to a consecutive sequence of cliques, there is a \emph{PQ}-tree $T'$ that is equivalent to $T$, and frontier of $T'$ represents $\sigma$.
In other words, each normalized interval representation of the graph $G$ is represented by some tree equivalent to $T$.

\subsection{\emph{MPQ}-trees}

\emph{PQ}-trees are quite a simple and easy to understand data structure representing interval graphs, but unfortunately they may occupy up to $\Oh{n^2}$ space.
To reduce the space consumption, \cite{Korte89} presented \emph{modified} \emph{PQ}-trees called \emph{MPQ}-trees.
In an \emph{MPQ}-tree, we do not store maximal cliques in leaves,
but we assign to each \emph{P}-node and each child of a \emph{Q}-node a set of vertices in such a way
that vertices laying on a path from the root of the tree to some leaf represent a maximal clique in $G$,
see Figure \ref{fig:MpqTree}C for an example.
For a \emph{Q}-node $Q$ with children $T_1,\ldots,T_k$,
we denote $S_i$ the set of vertices assigned to $T_i$, and call it the \emph{i-th section} of $Q$.
Note that, each vertex belongs to the consecutive sequence of maximal cliques, so it has to belong to consecutive sequence of sections of a \emph{Q}-node.
Hence, in order to limit the used space, we can store the information about the vertex $x$ only in the first and last section it belongs to.
Thanks to this modification, an \emph{MPQ}-tree is an $\Oh{n}$ space representation of an interval graph.
In this paper we show several drawings of \emph{MPQ}-trees.
We represent \emph{P}-nodes as circles, and \emph{Q}-nodes as rectangles divided into smaller rectangles representing sections of the \emph{Q}-node.
For instance, in the Figure \ref{fig:MpqTree}C the root is an empty \emph{P}-node, and the vertex $6$ belongs to the sections $S_2$ and $S_3$ of the only \emph{Q}-node.

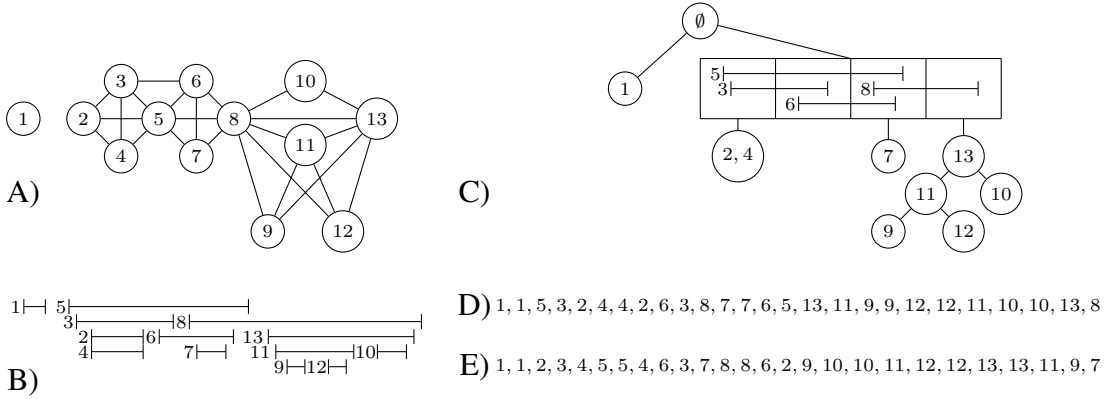
\begin{figure}[h]
\begin{center}
\begin{scriptsize}
\begin{tikzpicture}

    \node at (0,-1) {\large A)};

    \node[draw,circle] (n1) at (0,0) {$1$};
    \node[draw,circle] (n2) at (1.3,0.5) {$3$};
    \node[draw,circle] (n3) at (0.8,0) {$2$};
    \node[draw,circle] (n4) at (1.3,-0.5) {$4$};
    \node[draw,circle] (n5) at (1.8,0) {$5$};
    \node[draw,circle] (n6) at (2.3,0.5) {$6$};
    \node[draw,circle] (n7) at (2.8,0) {$8$};
    \node[draw,circle] (n8) at (2.3,-0.5){$7$};
    \node[draw,circle] (n9) at (4.7,0) {$13$};
    \node[draw,circle] (n10) at (3.75,-0.35) {$11$};
    \node[draw,circle] (n11) at (3.25,-1.5) {$9$};
    \node[draw,circle] (n12) at (4.25,-1.5) {$12$};
    \node[draw,circle] (n13) at (3.75,0.5) {$10$};

    \draw [-] (n3) -- (n4) -- (n5) -- (n3) -- (n2) -- (n5) -- (n6) -- (n2) -- (n4);
    \draw [-] (n5) -- (n8) -- (n7) -- (n5);
    \draw [-] (n6) -- (n8);
    \draw [-] (n6) -- (n7) -- (n9) -- (n13) -- (n7);
    \draw [-] (n7) -- (n11) -- (n10) -- (n7) -- (n12) -- (n10) -- (n9);
    \draw [-] (n12) -- (n9) -- (n11);

    \node at (0,-3.5) {\large B)};

    \draw [|-|] (0,-2.5) -- (0.3,-2.5); \node at (-0.1,-2.5) {$1$};
    \draw [|-|] (0.6,-2.5) -- (3,-2.5); \node at (0.5,-2.5) {$5$};
    \draw [|-|] (2.2,-2.7) -- (5.3,-2.7); \node at (2.1,-2.7) {$8$};
    \draw [|-|] (1.8,-2.9) -- (2.8,-2.9); \node at (1.7,-2.9) {$6$};
    \draw [|-|] (0.7,-2.7) -- (2,-2.7); \node at (0.6,-2.7) {$3$};
    \draw [|-|] (0.9,-2.9) -- (1.6,-2.9); \node at (0.8,-2.9) {$2$};
    \draw [|-|] (0.9,-3.1) -- (1.6,-3.1); \node at (0.8,-3.1) {$4$};
    \draw [|-|] (2.3,-3.1) -- (2.7,-3.1); \node at (2.2,-3.1) {$7$};
    \draw [|-|] (3.25,-2.9) -- (5.2,-2.9); \node at (3.05,-2.9) {$13$};
    \draw [|-|] (3.35,-3.1) -- (4.4,-3.1); \node at (3.15,-3.1) {$11$};
    \draw [|-|] (4.7,-3.1) -- (5.1,-3.1); \node at (4.55,-3.1) {$10$};
    \draw [|-|] (3.5,-3.3) -- (3.75,-3.3); \node at (3.4,-3.3) {$9$};
    \draw [|-|] (4.05,-3.3) -- (4.3,-3.3); \node at (3.9,-3.3) {$12$};
    
    \node at (6,-1) {\large C)};

    \node[draw,circle] (troot) at (9,1.3) {$\emptyset$};
    \node[draw,circle] (tisolated) at (8,0.4) {$1$};
    
    \node[draw,circle] (tpair) at (9.5,-0.5) {$2,4$};
    \node[draw,circle] (ts3) at (11.5,-0.5) {$7$};
    \node[draw,circle] (ts4) at (12.5,-0.5) {$13$};
    \node[draw,circle] (ts4l) at (12,-1) {$11$};
    \node[draw,circle] (ts4r) at (13,-1) {$10$};
    \node[draw,circle] (ts4ll) at (11.5,-1.5) {$9$};
    \node[draw,circle] (ts4lr) at (12.5,-1.5) {$12$};

    \draw [-] (9,0) -- (13,0);
    \draw [-] (9,0.8) -- (13,0.8);
    \draw [-] (9,0) -- (9,0.8);
    \draw [-] (10,0) -- (10,0.8);
    \draw [-] (11,0) -- (11,0.8);
    \draw [-] (12,0) -- (12,0.8);
    \draw [-] (13,0) -- (13,0.8);

    \draw [|-|] (9.3,0.6) -- (11.7,0.6); \node at (9.2,0.6) {$5$};
    \draw [|-|] (9.4,0.4) -- (10.7,0.4); \node at (9.3,0.4) {$3$};
    \draw [|-|] (10.3,0.2) -- (11.6,0.2); \node at (10.2,0.2) {$6$};
    \draw [|-|] (11.3,0.4) -- (12.7,0.4); \node at (11.2,0.4) {$8$};

    \draw [-] (troot) -- (tisolated);
    \draw [-] (troot) -- (11,0.8);
    \draw [-] (9.5,0) -- (tpair);
    \draw [-] (11.5,0) -- (ts3);
    \draw [-] (12.5,0) -- (ts4);
    \draw [-] (ts4) -- (ts4l);
    \draw [-] (ts4) -- (ts4r);
    \draw [-] (ts4l) -- (ts4ll);
    \draw [-] (ts4l) -- (ts4lr);

    \node at (6,-2.5) {\large D)};
    \node at (10.3,-2.5) {$1,1,5,3,2,4,4,2,6,3,8,7,7,6,5,13,11,9,9,12,12,11,10,10,13,8$};
    \node at (6,-3.3) {\large E)};
    \node at (10.3,-3.3) {$1,1,2,3,4,5,5,4,6,3,7,8,8,6,2,9,10,10,11,12,12,13,13,11,9,7$};

\end{tikzpicture}
\end{scriptsize}
\end{center}
\caption{
    A) An interval graph $G$, 
    B) Its interval representation $\mathcal{I}$, 
    C) Its \emph{MPQ}-tree $\mathcal{T}$, 
    D) String representation $\mathcal{S}$ of tree $\mathcal{T}$,
    E) Canonical string representation
}
\label{fig:MpqTree}
\end{figure}

\subsection{Known results}

During past decades, many researchers published their results on constructing both \emph{PQ}-trees and \emph{MPQ}-trees.
Those trees were mostly used to determine whether a given graph $G = \brac{V,E}$ is an interval graph or not.
\cite{Lueker79} in their recognition algorithm used \emph{PQ}-trees and proved that for a given graph $G$
the corresponding \emph{PQ}-tree can be computed in $\Oh{n+m}$ time.
\cite{Korte89} presented analogous result for \emph{MPQ}-trees.
In this paper we are most interested in work of \cite{Saitoh07} who presented an algorithm
that constructs an \emph{MPQ}-tree for a given interval graph representation
and works in $\Oh{n \log n}$ time, or $\Oh{n}$ if the endpoints of intervals are given in an ascending order.

\begin{theorem}[\cite{Saitoh07} Thm.12]
\label{thm:LinearMPQTree}
If the graph $G$ is given as an interval representation such that the endpoints are sorted by the coordinates,
then there is an algorithm that produces an \emph{MPQ}-tree corresponding to $G$ in $\Oh{n}$ time.
\end{theorem}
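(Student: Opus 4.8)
The plan is to build the \emph{MPQ}-tree directly from the sorted list of the $2n$ endpoints in a single left-to-right sweep, spending amortised $\Oh{1}$ time per endpoint and never materialising a maximal clique explicitly (the total size of the maximal cliques can be quadratic in $n$, so we cannot afford to list them).

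First I would convert the sorted endpoints into a string representation $\mathcal{S}$ of length $2n$ --- breaking ties between equal coordinates by listing left endpoints before right endpoints --- and make one pass over $\mathcal{S}$ to recover the combinatorial data we actually need: the number $k \le n$ of maximal cliques $C_1,\dots,C_k$, in their natural left-to-right order, and, for every vertex $x$, the indices $a_x \le b_x$ of the first and the last clique containing $x$. Here the $i$-th maximal clique is the set of intervals that are open at the moment just before the $i$-th occurrence in $\mathcal{S}$ of a right endpoint that is immediately preceded by a left endpoint, and $a_x, b_x$ are read off as this counter passes the two endpoints of $x$. Since all the values $a_x, b_x$ lie in $\set{1,\dots,k}$ with $k \le n$, a counting sort groups the vertices by $a_x$ and by $b_x$ in $\Oh{n}$ time.

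Next I would assemble the tree while sweeping the cliques $C_1,\dots,C_k$, maintaining a stack that holds the current root-to-leaf path of partially built nodes. When the sweep enters the first clique of a group of vertices it pushes a new \emph{P}- or \emph{Q}-node, or extends the current \emph{Q}-node by one section; when it leaves the last clique of a vertex it finalises that vertex's section or leaf assignment, and whenever a node becomes complete it is popped and merged into its parent. The \emph{P}-node versus \emph{Q}-node distinction and the normalisation of degenerate nodes (two-section \emph{Q}-nodes, \emph{P}-nodes with a single child, empty \emph{P}-nodes and trivial leaves) are handled exactly as in the node characterisation of Korte and M\"ohring. Because each interval is pushed and popped once and each section and child pointer is written once --- in the spirit of the ``all nearest smaller values'' / Cartesian-tree computation --- the whole construction costs $\Oh{n}$. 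Correctness then follows because the frontier of the resulting tree is $C_1,\dots,C_k$ and every vertex is assigned precisely to the nodes and sections spanning the consecutive run of cliques that contain it, which by the characterisation of \emph{MPQ}-trees forces the tree to be an \emph{MPQ}-tree of $G$.

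The step I expect to be the main obstacle is this tree-assembly sweep: getting the \emph{P}-node/\emph{Q}-node case analysis and the restructuring operations right --- turning a \emph{P}-node into a \emph{Q}-node, or splitting a node, when a newly opened interval is incompatible with the current shape --- and then arguing that every such restructuring, and more generally every access to a vertex, is $\Oh{1}$ amortised even though a single clique may contain $\Om{n}$ vertices. That amortisation is exactly what upgrades the $\Oh{n+m}$ bound of Korte--M\"ohring to $\Oh{n}$ once the representation is already sorted.
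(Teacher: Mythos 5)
The paper does not prove this theorem at all: it is quoted as Theorem~12 of Saitoh et al.~\cite{Saitoh07} and used as a black box, so there is no in-paper proof to compare your attempt against.

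Taken on its own, your sketch is a plausible high-level description of what a linear-time \emph{MPQ}-tree construction does: recover the left-to-right clique order $C_1,\dots,C_k$ and the per-vertex interval $[a_x,b_x]$ of clique indices in one pass, bucket the vertices by $a_x$ and $b_x$, then assemble the tree with a stack of partially-built nodes. But the only step carrying any mathematical content is the one you explicitly leave open --- the $P$-node / $Q$-node case analysis during the sweep, the restructuring operations it triggers, and the amortisation showing each endpoint costs $\Oh{1}$ even though a single maximal clique or a single \emph{Q}-node may touch $\Om{n}$ vertices. Pointing to Korte--M\"ohring for that step does not close the gap, because Korte--M\"ohring's bound is $\Oh{n+m}$, and the entire point of the cited theorem is that sorted endpoints let one replace per-edge work with per-endpoint work. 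Without specifying how a node is promoted from $P$ to $Q$ (or merged into an enclosing $Q$-node) in amortised constant time, and without a potential argument to pay for restructurings, what you have is a correct outline of the data-flow of the algorithm rather than a proof that it achieves $\Oh{n}$. That said, you correctly identified this as the crux, so the proposal is honest about where the work lies; it is incomplete rather than wrong in direction.
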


Clearly, having a string representation of the graph $G$,
we can produce an interval representation satisfying the conditions of Theorem \ref{thm:LinearMPQTree} in $\Oh{n}$ time.
Hence, we have the following corollary.

\begin{corollary}
There is an algorithm that for a given string representation $\mathcal{S}$ of the graph $G$
builds a corresponding \emph{MPQ}-tree $\mathcal{T}$ in $\Oh{n}$ time.
\end{corollary}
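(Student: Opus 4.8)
The plan is to reduce to Theorem~\ref{thm:LinearMPQTree} by exhibiting the natural interval representation carried by $\mathcal{S}$, exactly as the remark preceding this corollary suggests. First I would make a single left-to-right pass over $\mathcal{S}$, and for each $x \in \set{1,\ldots,n}$ record $first\brac{x}$ and $second\brac{x}$; since $\norm{\mathcal{S}} = 2n$ and every $x$ occurs exactly twice, this costs $\Oh{n}$ time and space. Taking $I_x = \sbrac{first\brac{x},second\brac{x}}$ for each $x$ produces $n$ intervals whose $2n$ endpoints are pairwise distinct integers in $\set{1,\ldots,2n}$; in particular no two endpoints coincide, so there are no degenerate coincidences to worry about.

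Next I would check that $\set{I_1,\ldots,I_n}$ really is an interval representation of the graph encoded by $\mathcal{S}$. This is immediate from the definition of a string representation: $\brac{i,j}$ is an edge iff $I_i \cap I_j \neq \emptyset$. Crucially, the endpoints of this representation are already sorted by coordinate --- the endpoint occupying position $p$ of $\mathcal{S}$ has coordinate $p$ --- so during the same scan we can emit the endpoint sequence in the ascending order required by Theorem~\ref{thm:LinearMPQTree}, at no additional cost.

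Finally I would feed this sorted representation to the algorithm of Theorem~\ref{thm:LinearMPQTree}, which returns an \emph{MPQ}-tree $\mathcal{T}$ corresponding to $G$ in $\Oh{n}$ time; adding the $\Oh{n}$ preprocessing yields the claimed $\Oh{n}$ bound. I do not expect any genuine obstacle here; the only point that warrants a line of justification is that Theorem~\ref{thm:LinearMPQTree} applies to this specific kind of input --- distinct integer endpoints handed over already sorted --- and it does, since ``sorted by the coordinates'' is precisely its hypothesis, and distinctness of the endpoints only makes the clique structure easier to read off.
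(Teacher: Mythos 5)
Your proposal is correct and matches the paper's reasoning exactly: the paper dispatches this corollary with a one-line remark that the string representation immediately yields a coordinate-sorted interval representation, which is then fed into Theorem~\ref{thm:LinearMPQTree}. You have simply written out the details that the paper left implicit.
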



Before we proceed to technical definitions and lemmas, we provide some naming conventions we are going to use in the rest of this paper.
To avoid a confusion when talking about elements of a graph and elements of a tree,
we always refer elements of a graph as \emph{vertices} and elements of a tree as \emph{nodes}.
For a vertex $v$ of a graph $G$, we denote $node\brac{v}$ the node of a corresponding \emph{MPQ}-tree $\mathcal{T}$
such that $v$ belongs to the set assigned to that node.
For a node with $k$ subtrees $T_1,\ldots,T_k$, we denote $V_i$ the set of all vertices that are assigned to the nodes of a subtree $T_i$.
If $V_i = \emptyset$, then we say that a subtree $T_i$ is empty.
For a \emph{Q}-node we say that a vertex $v$ \emph{has its left endpoint} in a section $S_{l\brac{v}}$, if $v$ belongs to $S_{l\brac{v}}$ and does not belong to any other section $S_b$ with $b < l\brac{v}$.
Analogously, we say that $v$ \emph{has its right endpoint} in $S_{r\brac{v}}$, if $v$ belongs to $S_{r\brac{v}}$ and does not belong to any other section $S_b$ with $b > r\brac{v}$.
Vertex $v$ is \emph{contained} in sections $S_a,\ldots,S_b$, if $a \leq l\brac{v} < r\brac{v} \leq b$.

For an \emph{MPQ}-tree $\mathcal{T}$ we define a $2n$-element string $\mathcal{S}$ called \emph{string representation} of $\mathcal{T}$.
This string is built recursively over the structure of $\mathcal{T}$.
For a \emph{P}-node we first output all vertices that belong to that node,
then recursively string representations of the children from left to right,
and at the end yet again all vertices that belong to that node, but now in the reversed order.
Hence, the string representation for a \emph{P}-node with vertices $\set{1,\ldots,k}$ and no children
is $123\ldots\brac{k-1}kk\brac{k-1}\ldots 321$.
A string representation for a \emph{Q}-node is a concatenation of string representations for its sections.
The string for a section $S_i$ starts with vertices that have its left endpoint in $S_i$,
then there is a string for a subtree $T_i$, and finally vertices that have its right endpoint in $S_i$.
It is easy to see that string representation of $\mathcal{T}$ is also a string representation of the graph corresponding to $\mathcal{T}$.
We also define a \emph{normalized string representation} of $\mathcal{T}$.
Consider a permutation $\sigma: \set{1,\ldots,n} \rightarrow \set{1,\ldots,n}$, and a string $\sigma\brac{\mathcal{S}}$,
which results from the application of $\sigma$ to each element of $\mathcal{S}$.
Normalized string representation is the lexicographically smallest string $\sigma\brac{\mathcal{S}}$ among all permutations $\sigma$.
Finally, we recall some properties of \emph{MPQ}-trees produced by the Algorithm from Theorem \ref{thm:LinearMPQTree}.
\begin{lemma}[\cite{Korte89,Uehara05}]
    \label{lemma:QNodeProperties}
    In the \emph{MPQ}-tree constructed in Theorem \ref{thm:LinearMPQTree}
    for every \emph{Q}-node with $k$ children we have:
    \begin{enumerate}[a)]
        \item $V_1 \neq \emptyset$ and $V_k \neq \emptyset$,
        \item $S_1 \subset S_2$ and $S_k \subset S_{k-1}$,
        \item $S_{i-1} \cap S_i \neq \emptyset$ for $2 \leq i \leq k$,
        \item $S_{i-1} \neq S_i$ for $2 \leq i \leq k$,
        \item $\brac{S_i \cap S_{i+1}} \setminus S_1 \neq \emptyset$ and $\brac{S_{i-1} \cap S_i} \setminus S_k \neq \emptyset$ for $2 \leq i \leq k-1$, and
        \item $\brac{S_{i-1} \cup V_{i-1}} \setminus S_i \neq \emptyset$ and $\brac{S_i \cup V_i} \setminus S_{i-1} \neq \emptyset$ for $2 \leq i \leq k$.
    \end{enumerate}
    Moreover:
    \begin{enumerate}[A)]
	\item[g)] there are no two empty \emph{P}-nodes such that one of them is a parent of the other,
	\item[h)] there is no \emph{P}-node that have only one child which root is also a \emph{P}-node, and
	\item[i)] \emph{P}-nodes have no empty children
    \end{enumerate}
    see Figure \ref{fig:MpqTreePEmptyPath}.
\end{lemma}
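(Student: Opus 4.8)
As the lemma records structural invariants of the canonical \emph{MPQ}-tree, my plan is to deduce every item from two facts about the underlying \emph{PQ}-tree: (1) the tree produced by the Booth--Lueker / Korte--M{\"o}hring / Saitoh et al.\ construction is \emph{reduced}, i.e.\ its equivalence class is exactly the set of orderings $C_1,\ldots,C_m$ of the maximal cliques of $G$ in which every vertex occupies a consecutive block; and (2) the maximal cliques of $G$ are pairwise incomparable under inclusion, and for any such ordering and any $p<q<r$ one has $C_p\cap C_r\subseteq C_q$. First I would fix the dictionary: a maximal clique corresponds to a root-to-leaf path of the \emph{MPQ}-tree, along which a \emph{P}-node contributes its whole label and a \emph{Q}-node contributes exactly the section $S_j$ through whose subtree the path descends (where $S_j$ is read as the set of all \emph{Q}-node vertices present in column $j$). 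Hence the maximal cliques passing through a \emph{Q}-node with sections $S_1,\ldots,S_k$ and subtrees $T_1,\ldots,T_k$ are the sets $A\cup S_i\cup C$, with $A$ the fixed contribution of the ancestors and $C$ a maximal clique of $T_i$ (with $C=\emptyset$ when $T_i$ is a leaf).

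With this dictionary, items (a)--(f) become short arguments. For (f): if $(S_{i-1}\cup V_{i-1})\subseteq S_i$, then every clique $A\cup S_{i-1}\cup C$ through $T_{i-1}$ (with $C\subseteq V_{i-1}$) is contained in $A\cup S_i$, hence in any clique through $T_i$, contradicting maximality; the mirror inclusion is symmetric. For (d): if $S_{i-1}=S_i$, no \emph{Q}-node vertex has its left endpoint in $S_i$ or its right endpoint in $S_{i-1}$, so swapping the adjacent subtrees $T_{i-1},T_i$ produces another valid clique ordering; a reduced \emph{Q}-node admits only full reversal, so this extra symmetry is impossible. For (c): if $S_{i-1}\cap S_i=\emptyset$, no vertex straddles the boundary between the blocks $T_1,\ldots,T_{i-1}$ and $T_i,\ldots,T_k$, so these blocks could be reversed independently, again more symmetry than a \emph{Q}-node expresses, so the reduced tree would have split here. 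Property (b) splits in two: $S_1\subseteq S_2$ because the construction assigns a vertex to a \emph{Q}-node section only when it lies in at least two consecutive sections (single-section vertices are pushed into the adjacent subtree), so $l(v)=1$ forces $r(v)\ge 2$; with (d) this gives $S_1\subsetneq S_2$, and $S_k\subsetneq S_{k-1}$ is symmetric. Given (b), property (a) follows: if $T_1$ were empty then $C_1=A\cup S_1\subseteq A\cup S_2$ lies in every clique through $T_2$, contradicting maximality (and injectivity of leaves$\to$cliques); $V_k\neq\emptyset$ is symmetric. For (e) I would again use the ``extra symmetry'' principle: if $(S_i\cap S_{i+1})\setminus S_1=\emptyset$ then, by $C_p\cap C_r\subseteq C_q$, every vertex crossing the boundary between columns $i$ and $i{+}1$ already lies in all of $S_1,\ldots,S_{i+1}$, which lets one detach and reattach the suffix $T_{i+1},\ldots,T_k$ (resp.\ the prefix, for the mirror statement) without losing any clique ordering, contradicting reducedness.

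For the \emph{P}-node conditions (g)--(i) I would show each forbidden configuration is redundant, i.e.\ can be rewritten without changing $G$ and without enlarging the set of representable clique orderings, so it cannot survive in the reduced tree. For (i): an empty leaf child of a \emph{P}-node $N$ yields the clique $A\cup(\text{label of }N)$, which is strictly contained in the clique obtained through any other child of $N$ (a \emph{P}-node has at least two children), contradicting maximality. For (h): if a \emph{P}-node $N$ has a single child whose root is a \emph{P}-node $M$, merging the labels of $N$ and $M$ gives an equivalent tree, so the construction would have performed this merge. For (g): if an empty \emph{P}-node $N$ has an empty \emph{P}-node child $M$, then, since \emph{P}-node children may be permuted freely, promoting $M$'s children to children of $N$ yields an equivalent tree, so the construction avoids this as well.

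The main obstacle is not any individual inequality but the bookkeeping linking everything to the \emph{specific} algorithm of Theorem~\ref{thm:LinearMPQTree}: one must verify that its output is genuinely the reduced \emph{MPQ}-tree — that it never creates the configurations ruled out in (g)--(i) and always routes single-section vertices into subtrees — and items (e) and the inclusion $S_1\subseteq S_2$ used in (b) are where the ``reducedness $\Leftrightarrow$ no redundant symmetry'' translation takes the most care. Since all of this is carried out in \cite{Korte89} and \cite{Uehara05}, in the paper I would cite those sources and, if needed, expand only the dictionary together with the three-line arguments for (c), (d), and (f) above.
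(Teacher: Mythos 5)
The paper does not prove this lemma --- it is stated with a citation to \cite{Korte89,Uehara05} and followed immediately by a figure and the next section, with no proof given --- which is exactly what your last paragraph proposes to do. Your supporting sketch (dictionary between root-to-leaf paths and maximal cliques, plus ``extra symmetry contradicts reducedness'' for the \emph{Q}-node items) is broadly sound, so there is essentially nothing in the paper to compare it against.
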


\begin{figure}[h]
\begin{center}
\begin{scriptsize}
\begin{tikzpicture}


    \node[draw,circle] (ln1) at (1,1.5) {\small $\emptyset$};
    \node[draw,circle] (ln2) at (0.5,0.75) {\small $\emptyset$};
    \node (l1) at (0,0) {\small $T_1$};
    \node (l2) at (1,0) {\small $T_2$};
    \node (l3) at (1.5,0.75) {\small $T_3$};

    \node[draw,circle] (rn) at (3.7,1.15) {\small $\emptyset$};
    \node (r1) at (3.1,0.35) {\small $T_1$};
    \node (r2) at (3.7,0.35) {\small $T_2$};
    \node (r3) at (4.3,0.35) {\small $T_3$};

    \node at (2.3,0.75) {\small $\rightarrow$};

    \draw [-] (ln1) -- (ln2);
    \draw [-] (ln1) -- (l3);
    \draw [-] (ln2) -- (l1);
    \draw [-] (ln2) -- (l2);
    
    \draw [-] (rn) -- (r1);
    \draw [-] (rn) -- (r2);
    \draw [-] (rn) -- (r3);

    \node[draw,circle] (p1) at (7,1.4) {$P_1$};
    \node[draw,circle] (p2) at (7,0.65) {$P_2$};
    \node (n1) at (6.5,0) {\small $T_1$};
    \node (ndots) at (7,0) {\small \ldots};
    \node (n2) at (7.5,0) {\small $T_k$};

    \draw [-] (p1) -- (p2);
    \draw [-] (p2) -- (n1);
    \draw [-] (p2) -- (n2);

    \node at (8,0.75) {\small $\rightarrow$};

    \node[draw,circle] (q) at (9,1) {$P_1+P_2$};
    \node (m1) at (8.5,0) {\small $T_1$};
    \node (ndots) at (9,0) {\small \ldots};
    \node (m2) at (9.5,0) {\small $T_k$};

    \draw [-] (q) -- (m1);
    \draw [-] (q) -- (m2);

\end{tikzpicture}
\end{scriptsize}
\end{center}
    \caption{\emph{MPQ}-trees do not contain two consecutive empty \emph{P}-nodes (left),
    or a \emph{P}-node with only one child which root is also a \emph{P}-node (right).}
\label{fig:MpqTreePEmptyPath}
\end{figure}
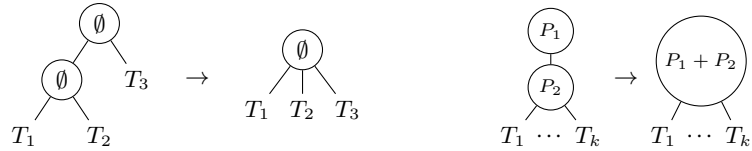

It is worth noting that \emph{MPQ}-trees describe interval graph in a natural recursive way.
If $G$ consists of at least $2$ connected components,
then the root node of \emph{MPQ}-tree $\mathcal{T}$ corresponding with $G$ is an empty \emph{P}-node, and each subtree corresponds to each connected component of $G$.
If $G$ is connected an vertices $v_{i_1},\ldots,v_{i_k}$ are universal in $G$ (vertex is universal if it is connected to all other vertices),
then the root of $\mathcal{T}$ node is a \emph{P}-node containing $v_{i_1},\ldots,v_{i_k}$.
In the remaining cases the root of $\mathcal{T}$ is a \emph{Q}-node.

\section{Canonical \emph{MPQ}-tree}
\label{sec:CanonicalMPQ}

In this section, we define a total ordering $\prec$ on \emph{MPQ}-trees.
One may notice that the lexicographical order on string representations is a total ordering on \emph{MPQ}-trees,
but for complexity reasons we introduce a different one.
Denote $\norm{\mathcal{T}}$ the number of vertices contained in the tree $\mathcal{T}$,
$c\brac{\mathcal{T}}$ the number of children of the root of $\mathcal{T}$,
and $ex\brac{\mathcal{T}}$ the number of vertices that belong to the root of $\mathcal{T}$.
We assign a tuple $t_{\mathcal{T}} = \tbrac{\norm{\mathcal{T}},ex\brac{\mathcal{T}},c\brac{\mathcal{T}}} \in \mathbb{N}^{3}$ to every tree $\mathcal{T}$,
and say that if $t_{\mathcal{T}_{1}}$ is lexicographically smaller than $t_{\mathcal{T}_2}$, then $\mathcal{T}_1 \prec \mathcal{T}_2$.
For two trees $\mathcal{T}_1$ and $\mathcal{T}_2$ such that $t_{\mathcal{T}_1} = t_{\mathcal{T}_2}$,
we say that $\mathcal{T}_1 \prec \mathcal{T}_2$, if the normalized string representation of $\mathcal{T}_1$ is lexicographically not greater
than normalized string representation  of $\mathcal{T}_2$.
We say that \emph{MPQ}-tree $\mathcal{T}$ is \emph{in canonical form},
if for every other tree $\mathcal{T}'$ representing the same graph $G$ we have $\mathcal{T} \prec \mathcal{T}'$.
A string $\mathcal{S}$ is a \emph{canonical string}, if it is a normalized string representation of a canonical tree.
Observe that if $\mathcal{T}$ is in a canonical form, then all subtrees of $\mathcal{T}$ are in a canonical form.
Clearly, if some subtree of $\mathcal{T}$ is not in a canonical form, then we may rotate it and obtain a lexicographically smaller string.

\begin{theorem}
\label{thm:TreeIsomorphism}
Two interval graphs $G_1$ and $G_2$ are isomorphic if and only if their canonical strings $S_1$ and $S_2$ are equal.
\end{theorem}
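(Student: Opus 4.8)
The plan is to establish both implications using the two-level structure of the canonical form: the triple $t_{\mathcal{T}}$ together with the normalized string representation, and the recursive fact (already noted in the excerpt) that all subtrees of a canonical tree are themselves canonical.

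First I would argue the easy direction. If the canonical strings $S_1$ and $S_2$ are equal, then since a string representation encodes an interval graph in the natural way (via the intervals $I_x = [first(x), second(x)]$), equal strings yield identical interval representations up to the relabeling induced by the normalizing permutation $\sigma$, and hence $G_1 \cong G_2$. More carefully: $S_i$ is a normalized string representation of a canonical tree $\mathcal{T}_i$ representing $G_i$, so $S_i = \sigma_i(\mathcal{S}_i)$ for some permutation $\sigma_i$ and some string representation $\mathcal{S}_i$ of $\mathcal{T}_i$; the map $\sigma_2^{-1} \circ \sigma_1$ is then an isomorphism from the interval graph encoded by $\mathcal{S}_1$ to the one encoded by $\mathcal{S}_2$, i.e.\ from $G_1$ to $G_2$.

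The substantive direction is: if $G_1 \cong G_2$, then $S_1 = S_2$. Here I would first observe that an isomorphism $\varphi: G_1 \to G_2$ carries any \emph{MPQ}-tree of $G_1$ to an \emph{MPQ}-tree of $G_2$ (relabeling vertices by $\varphi$), and vice versa; in particular the canonical trees $\mathcal{T}_1$ and $\mathcal{T}_2$ represent graphs that are isomorphic. The key claim to prove is that \emph{the canonical tree of a graph is unique up to isomorphism of the labeled structure} — equivalently, that if $\mathcal{T}_1$ and $\mathcal{T}_2$ are both canonical and their underlying graphs are isomorphic, then they have the same normalized string representation. For this I would proceed by induction on $n$ along the tree structure. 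The triple $\langle \|\mathcal{T}\|, ex(\mathcal{T}), c(\mathcal{T})\rangle$ is an isomorphism invariant of the graph when $\mathcal{T}$ is canonical: $\|\mathcal{T}\| = n$ is obviously invariant, and one must check that the canonical form is forced to put a specific number of vertices at the root and to have a specific root degree — this follows because among \emph{all} \emph{MPQ}-trees of $G$ the canonical one minimizes this triple lexicographically, so the minimizing values depend only on the isomorphism type of $G$. Then, among trees achieving this minimal triple, the canonical one minimizes the normalized string; since the set of normalized strings of \emph{MPQ}-trees of $G_1$ equals that of $G_2$ (transported by $\varphi$), the two minima coincide, giving $S_1 = S_2$.

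The main obstacle I expect is making the inductive/structural argument fully rigorous, in particular handling the interplay between the root-level invariants and the recursive canonicity of subtrees, and being careful about the plane-tree equivalences: permuting children of a \emph{P}-node and reversing a \emph{Q}-node's children. One must show that the normalized-string minimization, after the triple is fixed, effectively selects a canonical ordering of the children subtrees (sorting them by $\prec$) and a canonical orientation of each \emph{Q}-node, and that this selection is determined purely by the isomorphism type — the potential subtlety being ties among isomorphic child subtrees (which are harmless, since swapping isomorphic subtrees does not change the string) and the asymmetry forced by Lemma~\ref{lemma:QNodeProperties} on \emph{Q}-nodes (e.g.\ $S_1 \subset S_2$), which pins down \emph{Q}-node orientation up to the genuine reversal symmetry. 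Once these structural points are in place, the equality $S_1 = S_2$ follows by comparing the two lexicographic minimizations over the common (transported) search space.
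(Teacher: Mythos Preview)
The paper states this theorem without proof; it is treated as immediate from the definition of the canonical form. Your argument is correct, and in fact the core of it is all that is needed: an isomorphism $\varphi\colon G_1\to G_2$ induces, by vertex relabeling, a bijection between the \emph{MPQ}-trees of $G_1$ and those of $G_2$, and both the triple $t_{\mathcal{T}}$ and the normalized string representation are invariant under such relabeling. Hence the two graphs have exactly the same set of pairs $(t_{\mathcal{T}},\text{normalized string})$, and the canonical string, being by definition the $\prec$-minimum over this set, is the same for both. The converse direction you give is also fine.

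Your final two paragraphs, about induction on the tree structure, ties among isomorphic child subtrees, and pinning down \emph{Q}-node orientation via Lemma~\ref{lemma:QNodeProperties}, are unnecessary for this statement. Those are precisely the concerns one faces when designing and proving correctness of the \emph{algorithm} in Theorem~\ref{thm:MPQTreeCanonicalForm}, which must actually compute the canonical form; they are not needed to show that the canonical string is an isomorphism invariant. The minimum-over-a-common-search-space argument you already gave in your third paragraph is complete on its own.
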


\begin{theorem}
\label{thm:MPQTreeCanonicalForm}
There is an algorithm that for every \emph{MPQ}-tree $\mathcal{T}$ computes its canonical form in $\Oh{n \log n}$ time.
\end{theorem}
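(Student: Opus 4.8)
The plan is to compute the canonical form by a single bottom-up pass over $\mathcal{T}$, using the observation already recorded above that every subtree of a canonical tree is itself canonical, together with the fact that the only freedom in an \emph{MPQ}-tree of a fixed interval graph is to permute the children of a \emph{P}-node and to reverse the sections (and children) of a \emph{Q}-node. We process the nodes of $\mathcal{T}$ in post-order; when a node $v$ is reached every child subtree is already in canonical form and has been assigned a small integer \emph{code} such that two processed subtrees receive the same code iff they are isomorphic, and the codes are consistent with $\prec$. At a \emph{P}-node $v$ we reorder its children so that their codes are non-decreasing (this is the $\prec$-smallest arrangement, and the order of the vertices stored in $v$ is irrelevant since those vertices are true twins). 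At a \emph{Q}-node $v$ we form the signatures of the two orientations and keep the one whose normalized string is smaller; since the two orientations represent the same graph they have the same tuple, so this is just a comparison of their normalized strings, which can be decided in $\Oh{k_v}$ time (for $v$ with $k_v$ sections) from the children codes, the $ex(\cdot)$ values, and the numbers of vertices having their left/right endpoint in each section. We then build the signature of $v$ (its type, $ex(v)$, the ordered list of children codes, and, for a \emph{Q}-node, the per-section endpoint counts), look it up in a dictionary, and give $v$ a fresh code if the signature is new. After the root is processed, one more traversal emits the canonical (normalized) string, from which the canonical tree itself can be rebuilt in $\Oh{n}$ time by the corollary to Theorem~\ref{thm:LinearMPQTree}.

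Correctness is proved by induction on $\norm{\mathcal{T}}$. By the inductive hypothesis every child subtree has been put into canonical form, so it remains to check that the local rearrangement at $v$ minimizes $\prec$; since all trees representing the graph of $v$'s subtree realize the same tuple $t_v$ (the two equivalence operations change neither the number of vertices, nor the vertices stored at the root, nor the number of children of the root), this amounts to lexicographically minimizing the normalized string. The string representation of $v$ is the concatenation of the contribution of $v$'s own vertices (for a \emph{P}-node) or of its sections (for a \emph{Q}-node) with the contributions of the child subtrees, and after the global first-appearance relabelling each child contributes its own canonical string shifted by an offset equal to the number of vertices appearing before it; hence for a \emph{Q}-node only the two orientations have to be compared, and for a \emph{P}-node only the order of the children matters. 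One then shows that ordering the children of a \emph{P}-node by $\prec$ is optimal: here the structural properties of Lemma~\ref{lemma:QNodeProperties}, in particular items (g)--(i) which forbid the degenerate configurations involving empty \emph{P}-nodes, guarantee that on the set of subtrees that can occur as children of a \emph{P}-node the order $\prec$ coincides with the order that lexicographically minimizes the concatenation of the (offset, first-appearance-relabelled) contributions.

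For the running time, the \emph{MPQ}-tree has $\Oh{n}$ nodes and $\Oh{n}$ sections in total. Assigning the codes by processing the nodes in generations and radix-sorting the signatures of a generation costs $\Oh{n}$ overall, since the total length of all signatures is $\Oh{n}$ (each child code appears in exactly one parent signature, and each vertex of $G$ contributes $\Oh{1}$ to $ex(\cdot)$ or to the section endpoint counts of its node). Reordering the children of a \emph{P}-node of degree $d$ costs $\Oh{d\log d}$, and $\sum_v d_v\log d_v=\Oh{n\log n}$ because $\sum_v d_v=\Oh{n}$. Selecting the orientation of a \emph{Q}-node with $k$ sections costs $\Oh{k}$, summing to $\Oh{n}$. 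Emitting the canonical string and rebuilding the tree are $\Oh{n}$. Altogether the algorithm runs in $\Oh{n\log n}$ time.

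The step I expect to be the main obstacle is the correctness argument of the second paragraph: showing that the purely local greedy choices compose into a globally $\prec$-minimal tree, and in particular that sorting the children of a \emph{P}-node by $\prec$ is optimal. The delicate point is comparing the string contributions of two sibling subtrees of different sizes, for which the first-appearance relabelling produces different offsets; one must rule out, using the constraints on \emph{MPQ}-trees, the situations in which a $\prec$-smaller subtree would nevertheless yield a larger string when placed first (for instance the case in which the canonical string of one sibling is a prefix of that of another, which would force its root to be an empty \emph{P}-node). A secondary, more routine obstacle is the data-structural bookkeeping that keeps everything within $\Oh{n\log n}$: the $\Theta(n)$-length normalized strings of interior subtrees must never be materialized (otherwise the cost inflates to $\Theta(n\cdot\mathrm{depth})$), which is why subtrees are compared only through their integer codes.
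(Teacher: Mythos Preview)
Your overall strategy coincides with the paper's: a bottom-up sweep that assigns integer codes to subtrees consistent with $\prec$, sorts the children of each \emph{P}-node by code, and for each \emph{Q}-node picks the better of the two orientations; the $\Oh{n\log n}$ accounting is also the same. You even identify the same delicate point the paper glosses over, namely that ordering \emph{P}-node children by $\prec$ really does minimize the concatenated normalized string.

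There is, however, a concrete gap in your treatment of \emph{Q}-nodes. You propose to build the signature of a \emph{Q}-node from ``the children codes, the $ex(\cdot)$ values, and the numbers of vertices having their left/right endpoint in each section'', and to decide the orientation and the code from that. Per-section endpoint \emph{counts} do not determine the isomorphism type of the subtree. Take a \emph{Q}-node with four sections, $V_1=\{p\}$, $V_4=\{q\}$, $V_2=V_3=\emptyset$, and four section-vertices: in tree $X$ let them span sections $1\text{--}4,\ 1\text{--}2,\ 2\text{--}3,\ 3\text{--}4$; in tree $Y$ let them span $1\text{--}3,\ 1\text{--}2,\ 2\text{--}4,\ 3\text{--}4$. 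Both $X$ and $Y$ satisfy all items of Lemma~\ref{lemma:QNodeProperties}, have identical per-section left/right counts $(2,0),(1,1),(1,1),(0,2)$, and identical child codes, yet the underlying graphs are not isomorphic (degree sequences $5,3,3,3,2,2$ versus $4,4,3,3,2,2$). Your dictionary would hand $X$ and $Y$ the same code and hence the wrong canonical form for at least one of them. The same example shows that the orientation test cannot be done in $\Oh{k_v}$ from counts alone: what matters is the \emph{matching} between left and right endpoints across sections, not just how many there are.

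The paper fixes exactly this by first computing, for each \emph{Q}-node, a function $g$ that orders its vertices by the pair $\tbrac{l(v),r(v)}$ (radix sort over all \emph{Q}-nodes at once, $\Oh{n}$ total). The ``special string'' of a \emph{Q}-node then lists, section by section, the $g$-values of the endpoints interleaved with the children's $f$-codes; its length is $2\,ex(v)+c(v)$, not $k_v$, and both the orientation choice and the code are read off from it. Replacing your endpoint counts by this $g$-based encoding repairs the argument without changing the overall $\Oh{n\log n}$ bound.
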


\begin{proof}
At the very beginning, we shall compute a function $g$ that for every vertex $v$ will describe its relative position among all vertices from $node\brac{v}$.
We compute this function for each \emph{P}-node independently, and for all \emph{Q}-nodes collectively.
For a \emph{P}-node with $j$ vertices $z_1,\ldots,z_j$, we assign $g\brac{z_i} = i$.
Thus, we can compute the function $g$ for all \emph{P}-nodes in $\Oh{n}$ time.
In order to compute this function for all \emph{Q}-nodes,
at first we assign a tuple $\tbrac{l\brac{v},r\brac{v}}$ to each vertex $v$ which belongs to some \emph{Q}-node.
Then we sort all tuples using radix sort algorithm, and visit vertices in the order determined by their tuples.
For each \emph{Q}-node we keep a local counter that starts with $1$ and increases each time we visit a vertex from this node.
Thus, because all vertices are from the set $\set{1,\ldots,n}$, and each \emph{Q}-node has a linear in terms of $n$ number of sections,
we compute this function for all vertices in $\Oh{n}$ time.

We shall construct a function $f$ that assigns an integer $f\brac{\mathcal{T}'} > n$ to every subtree $\mathcal{T}'$ of a tree $\mathcal{T}$
in such a way that $\mathcal{T}_1 \prec \mathcal{T}_2 \iff f\brac{\mathcal{T}_1} \leq f\brac{\mathcal{T}_2}$.
Simultaneously we will rotate subtrees so that they are in canonical form.
At first, we compute tuples $t_{\mathcal{T}'}$ for each subtree $\mathcal{T}'$ of a tree $\mathcal{T}$.
Clearly, it can be easily done in $\Oh{n}$ time.
Then, we sort the tuples lexicographically in $\Oh{n}$ using radix sort algorithm.
In the next phases, we inspect nodes of tree $\mathcal{T}$ that have the same tuple $\tbrac{\norm{\mathcal{T}'},ex\brac{\mathcal{T}'},c\brac{\mathcal{T}'}}$,
and we do it from the smallest tuples to the biggest ones.
Observe that, when we define the value of the function $f$ for $\mathcal{T}'$, the values for all subtrees of $\mathcal{T}'$ are already computed.

All subtrees of $\mathcal{T}'$ are in a canonical form,
so in order to compute a canonical form of $\mathcal{T}'$, we need to determine the order of its children.
If the root of $\mathcal{T}'$ is a \emph{P}-node, then we use integers $f\brac{\mathcal{T}_i}$ as keys for children,
and sort them in $\Oh{c \log c}$ time, where $c = c\brac{\mathcal{T}'}$.
If the root of $\mathcal{T}'$ is a \emph{Q}-node $Q$, then we may leave it in the form it is or reverse it.
To decide what to do, we compute a special string representation $\mathcal{S}^{*}$, which is similar to the string representation,
but for each vertex $v$ that belongs to $Q$ we put $g\brac{v}$ instead of $v$,
and instead of inserting the whole string for a subtree $\mathcal{T}_i$, we put a single number $f\brac{\mathcal{T}_i}$.
Hence, the produced string has length $2*ex\brac{\mathcal{T}'}+c\brac{\mathcal{T}'}$ and is produced in time proportional to its length.
We also produce similar string for a rotated node, and if that string is lexicographically smaller than the original one,
then we rotate $Q$. Otherwise, we do nothing.

We have just computed canonical forms for all subtrees with the same tuple.
For each of them we produce a special string, and sort those strings lexicographically.
Finally, we assign values from the set $\set{F+1,F+2,\ldots}$,
where $F$ is the maximal number assigned to trees with lexicographically smaller tuples (or $F = n$ if there are no smaller).
We assign those numbers according to the computed order giving the same value to the subtrees with the same special string representation,
and that finishes the algorithm description.

Now, we prove the algorithm works in the declared time.
As we mentioned before, the computation of the function $g$ is linear in time.
The same applies to the computation and sorting for the node tuples.
Sorting children of a \emph{P}-node with $c$ children takes $\Oh{c \log c}$.
Hence, because all \emph{P}-nodes cannot have more than $\Oh{n}$ children in total,
we conclude that sorting children for all \emph{P}-nodes takes no more than $\Oh{n \log n}$ time.
The length of a special string for a \emph{Q}-node with $j$ vertices and $k$ sections is $\Oh{j+k}$.
Thus, the total processing time for all \emph{Q}-nodes is linear in terms of $n$.

The only thing we have not counted yet is the time spent on sorting subtrees with the same tuple.
Note that for a tuple $\tbrac{s,e,c}$, each special string has length exactly $2e+c$.
Let $n_{sec}$ be the number of subtrees having a tuple $\tbrac{s,e,c}$.
Sorting process for those subtrees takes no more than $\Oh{\brac{e+c}n_{sec} \log n_{sec}}$.
Thus, all sortings together take $\Oh{\sum_{sec} \brac{e+c}n_{sec} \log n_{sec}}$.
Note that $n_{sec} \leq n$, so we only need to show that $\sum_{sec} \brac{e+c}n_{sec}$ is $\Oh{n}$.
But, clearly $\sum_{sec} en_{sec} = n$ since this sum counts vertices in all nodes.
Similarly, $\sum_{sec} cn_{sec}$ equals the number of edges in $\mathcal{T}$, and we are done.
\end{proof}

\section{Interval edges}
\label{sec:IntervalEdges}

In this section we present a series of lemmas that characterize the interval edges for the interval graph $G$.
Moreover, for each interval edge $\brac{x,y}$, we also present a linear in terms of $n$ algorithm
that produces a string representation for the interval graph $G - \brac{x,y}$.
For an edge $\brac{x,y}$ that is not an interval edge,
we prove the existence of an induced chordless cycle on four vertices or an asteroidal triple in $G - \brac{x,y}$.
The characterization does not use the mere graph $G$, but the corresponding \emph{MPQ}-tree $\mathcal{T}$ instead.

\medskip
First, let us introduce an useful definition.
We say that $x$ \emph{is over} $y$ in $\mathcal{T}$,
if $\brac{x,y} \in E\brac{G}$ and $node\brac{x}$ is the lowest common ancestor of $node\brac{x}$ and $node\brac{y}$ in $\mathcal{T}$.
Notice that, if there is an edge $\brac{x,y}$ in the graph $G$, then $x$ is over $y$, or $y$ is over $x$.
Now, we make an easy observation on interval edges.

\begin{observation}
\label{obs:InducedC4}
If there are at least two vertices $z_1$ and $z_2$ such that both $x$ and $y$ are over $z_1$ and $z_2$,
and there is no edge $\brac{z_1,z_2}$, then $\brac{x,y}$ is not an interval edge.
\end{observation}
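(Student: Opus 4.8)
The plan is to exhibit an induced chordless cycle on four vertices inside $G - \brac{x,y}$, on the vertex set $\set{x,z_1,y,z_2}$ taken in the cyclic order $x, z_1, y, z_2$. Since interval graphs are chordal and hence contain no induced cycle of length at least four, producing such a configuration immediately shows that $G - \brac{x,y}$ is not an interval graph, i.e.\ that $\brac{x,y}$ is not an interval edge.

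The first step is to record the elementary fact noted just before the statement: whenever $u$ is over $w$ in $\mathcal{T}$ we have $\brac{u,w} \in E\brac{G}$. Applying this to the four ``over'' relations in the hypothesis yields $\brac{x,z_1},\brac{x,z_2},\brac{y,z_1},\brac{y,z_2} \in E\brac{G}$. The second step is to check that $x,y,z_1,z_2$ are pairwise distinct: $x \neq y$ because $\brac{x,y}$ is an edge; $z_1 \neq z_2$ by hypothesis; and $z_i \notin \set{x,y}$ for $i \in \set{1,2}$, since otherwise one of the edges $\brac{x,z_{3-i}}$ or $\brac{y,z_{3-i}}$ produced in the first step would coincide with $\brac{z_1,z_2}$, contradicting the assumption that $\brac{z_1,z_2}$ is a non-edge.

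The third step is to inspect the subgraph of $G - \brac{x,y}$ induced on $\set{x,z_1,y,z_2}$. The four ``cycle'' pairs $\brac{x,z_1},\brac{z_1,y},\brac{y,z_2},\brac{z_2,x}$ are edges of $G$ by the first step, and none of them equals $\brac{x,y}$, so all four remain present in $G - \brac{x,y}$. The two ``diagonal'' pairs are $\brac{x,y}$, which has been deleted, and $\brac{z_1,z_2}$, which is a non-edge of $G$ and therefore of $G - \brac{x,y}$. Hence the induced subgraph is precisely the four-cycle $x - z_1 - y - z_2 - x$, which finishes the proof. There is no genuine obstacle beyond this bookkeeping: the only subtlety is ruling out degeneracies (coinciding vertices, or a surviving chord), and both are handled by the ``over implies adjacent'' observation together with the hypothesis that $z_1$ and $z_2$ are non-adjacent.
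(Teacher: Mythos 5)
Your proof is correct and follows essentially the same route as the paper: observe that $x,z_1,y,z_2$ (in that cyclic order) yield a four-cycle in $G - \brac{x,y}$ which is chordless because $\brac{z_1,z_2}$ is absent and $\brac{x,y}$ has been deleted, hence $G - \brac{x,y}$ is not chordal and so not an interval graph. The extra care you take in verifying that the four vertices are pairwise distinct is a small, sound addition that the paper leaves implicit.
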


\begin{proof}
Vertices $x$, $z_1$, $y$ and $z_2$ in that order form a cycle of length $4$.
We assumed that there is no edge between $z_1$ and $z_2$,
so if there is no edge between $x$ and $y$, then this cycle is chordless in $G - \brac{x,y}$, see Figure \ref{fig:InducedC4}.
Hence, $G - \brac{x,y}$ is not a chordal graph and so not an interval graph.
\end{proof}

\begin{figure}[h]
\begin{center}
\begin{scriptsize}
\begin{tikzpicture}

    \node (x) [shape=circle,draw=black] at (0.3,1) {\small $x$};
    \node (y) [shape=circle,draw=black] at (1.2,1) {\small $y$};
    \node (z1) [shape=circle,draw=black] at (0,0) {\small $z_1$};
    \node (z2) [shape=circle,draw=black] at (1.5,0) {\small $z_2$};

    \path[-,dotted] (x) edge (y);
    \path[-] (x) edge (z1);
    \path[-] (x) edge (z2);
    \path[-] (y) edge (z1);
    \path[-] (y) edge (z2);

    \draw [->] (1.9,0.5) -- (2.6,0.5);

    \node (X) [shape=circle,draw=black] at (4,0) {\small $x$};
    \node (Y) [shape=circle,draw=black] at (3,1) {\small $y$};
    \node (Z1) [shape=circle,draw=black] at (3,0) {\small $z_1$};
    \node (Z2) [shape=circle,draw=black] at (4,1) {\small $z_2$};

    \path[-] (X) edge (Z1);
    \path[-] (X) edge (Z2);
    \path[-] (Y) edge (Z1);
    \path[-] (Y) edge (Z2);

\end{tikzpicture}
\end{scriptsize}
\end{center}
\caption{An induced $C_4$ after removing edge $\brac{x,y}$.}
\label{fig:InducedC4}
\end{figure}
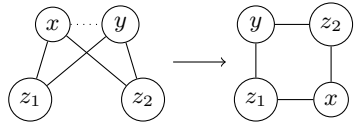

The above observation is useful when we want to prove that the edge $\brac{x,y}$ is not an interval edge.
However, in cases when $\brac{x,y}$ is an interval edge, we want to show a linear time algorithm that produces a string representation for a graph $G - \brac{x,y}$.
The following lemma, we call the \emph{swapping lemma},
comes handy when we try to produce the mentioned string.
It shows when we can swap two consecutive elements in a string representation without adding or removing any edges to the represented graph.

\begin{lemma}
\label{lemma:SwappingLemma}
Let $\mathcal{S}_1$ and $\mathcal{S}_2$ be string representations,
such that $\mathcal{S}_2$ is created from $\mathcal{S}_1$ by swapping elements at positions $i$ and $i+1$ for some $i$.
Denote $a$ the element at the $i$-th position in $\mathcal{S}_1$, and $b$ the element at the $\brac{i+1}$-th position ($\mathcal{S}_1 = ....ab..$ and $\mathcal{S}_2 = ....ba..$).
$\mathcal{S}_1$ and $\mathcal{S}_2$ represent the same interval graph iff both elements at swapped positions represent either left endpoints or right endpoints.
\end{lemma}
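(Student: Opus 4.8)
The plan is to show that the transposition of positions $i$ and $i+1$ can affect the adjacency of exactly one pair of vertices, namely $\set{a,b}$, and then to decide by a short case analysis when that adjacency is preserved. First I would record the following reduction. Read a string representation as a linear order on its $2n$ \emph{endpoint events} (a left event and a right event per vertex), where $\brac{c,d}\in E$ iff the events of $c$ and of $d$ interleave so that $I_c$ and $I_d$ overlap; observe that whether a given event of $c$ is the left or the right endpoint of $I_c$, and whether $\brac{c,d}\in E$, are both functions only of the relative order of the (at most four) events belonging to $c$ and $d$. Passing from $\mathcal{S}_1$ to $\mathcal{S}_2$ applies the transposition of positions $i$ and $i+1$ to this order, and this transposition changes the relative order of precisely one pair of events: the event at position $i$ and the event at position $i+1$. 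Assume $a\ne b$ (if $a=b$ then $\mathcal{S}_1=\mathcal{S}_2$ and there is nothing to prove). Then these two events belong to the distinct vertices $a$ and $b$, so for every vertex $c$ its own two events keep their order — hence every vertex keeps its left/right labeling — and for every pair $\set{c,d}\ne\set{a,b}$ the order of their events is unchanged, so $\brac{c,d}$ has the same status in $\mathcal{S}_1$ and in $\mathcal{S}_2$. Consequently $\mathcal{S}_1$ and $\mathcal{S}_2$ represent the same interval graph if and only if $\brac{a,b}\in E\brac{\mathcal{S}_1}\iff\brac{a,b}\in E\brac{\mathcal{S}_2}$.

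Next I would analyze the pair $\brac{a,b}$ directly. Let $p$ be the position of the other occurrence of $a$ and $q$ the position of the other occurrence of $b$; then $p,q\notin\set{i,i+1}$ and $p\ne q$, and since all $2n$ positions are distinct integers, the event of $a$ at position $i$ is a left endpoint exactly when $p\ge i+2$ (and a right endpoint exactly when $p\le i-1$), and similarly the event of $b$ at position $i+1$ is a left endpoint exactly when $q\ge i+2$ (and a right endpoint exactly when $q\le i-1$). This gives four cases. If both swapped events are left endpoints, then in $\mathcal{S}_1$ we have $I_a=\sbrac{i,p}$, $I_b=\sbrac{i+1,q}$ with $p,q\ge i+2$, so $first\brac{b}\in I_a$ and $\brac{a,b}\in E$; after the swap the two intervals again start at the adjacent positions $i$ and $i+1$ and extend past $i+1$, so $\brac{a,b}\in E$ still, and adjacency is preserved. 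The case of two right endpoints is the mirror image under reversing the string (which preserves the represented graph and exchanges left and right endpoints), so adjacency is again preserved. If the event at position $i$ is a left endpoint of $a$ and the event at position $i+1$ is a right endpoint of $b$, then in $\mathcal{S}_1$ we have $I_a=\sbrac{i,p}$, $I_b=\sbrac{q,i+1}$ with $p\ge i+2$, $q\le i-1$, which overlap (both contain position $i$), whereas in $\mathcal{S}_2$ the event at $i$ becomes the right endpoint of $b$ and the event at $i+1$ the left endpoint of $a$, giving $I_b=\sbrac{q,i}$, $I_a=\sbrac{i+1,p}$, which are disjoint — so $\brac{a,b}$ is lost and the graphs differ. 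The remaining case (right endpoint at $i$, left endpoint at $i+1$) is the mirror of this one: $\brac{a,b}\notin E\brac{\mathcal{S}_1}$ but $\brac{a,b}\in E\brac{\mathcal{S}_2}$, so the graphs again differ. Collecting the four cases yields exactly the stated equivalence.

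The statement is combinatorially light, so there is no deep obstacle. The part that needs care — and where a careless argument goes wrong — is the interval bookkeeping in the two mixed cases: one must use that the $2n$ positions are distinct integers with nothing strictly between $i$ and $i+1$, so "the other occurrence lies to the right of $i$" really forces it to lie at a position $\ge i+2$ (and, say, $p\ne i+1$ because position $i+1$ is occupied by $b$). It is exactly this unit gap between consecutive positions that lets the swap create or destroy the edge $\brac{a,b}$. A secondary point worth making cleanly up front is the reduction "only $\set{a,b}$ can be affected"; once that is established the rest is a finite verification.
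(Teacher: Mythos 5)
Your proof is correct and follows essentially the same route as the paper's: both first reduce the question to whether the single pair $\set{a,b}$ changes adjacency, and then perform the same four-way case analysis on the left/right status of the two swapped endpoints. You merely spell out the reduction and the interval bookkeeping in more detail than the paper's terse version.
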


\begin{proof}
Clearly, at most one edge can be added or removed by swapping those two elements.
If we swap the left endpoint of $a$ and the right endpoint of $b$, then we remove an edge $\brac{a,b}$.
If we swap the right endpoint of $a$ and the left endpoint of $b$, then we add an edge $\brac{a,b}$ which is not present in $\mathcal{S}_1$.
If both elements represent left endpoints, then right endpoints for both $a$ and $b$ are to the right of $i+1$, hence no edge is added or removed.
Similar argument works when both elements represent right endpoints.
\end{proof}

Our aim is to characterize all interval edges encoded by an \emph{MPQ}-tree $\mathcal{T}$.
Hence, as an input we are given an \emph{MPQ}-tree $\mathcal{T}$ and some edge $\brac{x,y} \in E\brac{G}$.
Without loss of generality, we assume that $x$ is over $y$ in $\mathcal{T}$,
and we work under this assumption in the following subsections.
We split our argument into cases according to the relative position of $node\brac{x}$ and $node\brac{y}$ in $\mathcal{T}$.


\subsection{$x$ and $y$ belong to the same \emph{P}-node}

At first, we consider the case where both $x$ and $y$ belong to the same \emph{P}-node $P$ in $\mathcal{T}$.
We show that under this assumption, the edge $\brac{x,y}$ is an interval edge if and only if $P$ is a leaf in $\mathcal{T}$.

\begin{lemma}
\label{lemma:RemovingEdgePInternal}
If $node\brac{x} = node\brac{y}$ is a \emph{P}-node that is not a leaf,
then $\brac{x,y}$ is not an interval edge.
\end{lemma}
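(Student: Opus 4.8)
The plan is to exhibit two non-adjacent vertices that are both dominated by $x$ and by $y$, and then invoke Observation~\ref{obs:InducedC4}. Since $node(x) = node(y)$ is a \emph{P}-node $P$, and $x$ is over $y$, we have $x,y \in P$. Because $P$ is not a leaf, it has at least one child, and in fact by Lemma~\ref{lemma:QNodeProperties}(h)--(i) together with the structure of \emph{MPQ}-trees, $P$ has children whose subtrees are nonempty. First I would pick any nonempty subtree $T_i$ of $P$ and observe that every vertex in $T_i$ lies on a root-to-leaf path passing through $P$, hence is adjacent to every vertex of $P$; in particular $x$ and $y$ are both over every vertex of $V_i$.

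The heart of the matter is producing two \emph{non-adjacent} such vertices. I would split into two cases. If $P$ has at least two nonempty children $T_i$ and $T_j$ with $i \neq j$, then take any $z_1 \in V_i$ and $z_2 \in V_j$; vertices in different subtrees of a \emph{P}-node never appear in a common maximal clique, so $(z_1,z_2) \notin E(G)$, and both $x,y$ are over both of them. If instead $P$ has exactly one nonempty child, then by Lemma~\ref{lemma:QNodeProperties}(h) the root of that child cannot be a \emph{P}-node, so it is a \emph{Q}-node $Q$ (it cannot be a leaf since a leaf carries no vertices in the \emph{MPQ} convention, and an empty single child is forbidden by (i)/(g)). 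A \emph{Q}-node has at least two sections, and by Lemma~\ref{lemma:QNodeProperties}(a) both $V_1$ and $V_k$ are nonempty; moreover by (b) and (d), $S_1 \subsetneq S_2$, so there is a vertex $u$ with its left endpoint in $S_2$ (or later), and similarly a vertex with its right endpoint in $S_{k-1}$ or earlier. Such a pair $z_1, z_2$ — one living only in the left part of $Q$, one only in the right part — is non-adjacent, and again both are assigned to nodes of the subtree rooted at $Q$, hence dominated by all of $P$, in particular by $x$ and $y$.

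Once I have the non-adjacent pair $z_1, z_2$ with $x, y$ both over each of them, Observation~\ref{obs:InducedC4} applies directly and yields that $(x,y)$ is not an interval edge, since $x, z_1, y, z_2$ form an induced $C_4$ in $G - (x,y)$.

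The main obstacle is the single-nonempty-child case: I need to be careful that the one child is genuinely a \emph{Q}-node with at least two sections and with nonempty extreme sections, and that I can extract from it two vertices that fail to be adjacent. This is exactly where properties (a), (b), (d), (g), (h), (i) of Lemma~\ref{lemma:QNodeProperties} do the work, ruling out the degenerate shapes (a \emph{P}-node child, an empty child, a trivial two-section \emph{Q}-node whose sections coincide) that would otherwise block the construction. I would make sure to state explicitly why a \emph{Q}-node with $k \geq 2$ sections always contains two non-adjacent vertices: the leftmost section's exclusive vertices and the rightmost section's exclusive vertices cannot share a maximal clique.
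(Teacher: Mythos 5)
Your overall strategy is the same as the paper's: split on whether the \emph{P}-node $P$ has $\geq 2$ children or exactly one, use Lemma~\ref{lemma:QNodeProperties}(h) to force a \emph{Q}-node root in the single-child case, and invoke Observation~\ref{obs:InducedC4} with a non-adjacent pair $z_1,z_2$ both dominated by $x$ and $y$. The multi-child case is fine.

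There is, however, a genuine gap in your single-child \emph{Q}-node case. After correctly noting that $V_1$ and $V_k$ are nonempty by Lemma~\ref{lemma:QNodeProperties}(a), you pivot to picking a vertex $u$ with its left endpoint in $S_2$ or later and a vertex $w$ with its right endpoint in $S_{k-1}$ or earlier, and assert these are non-adjacent. That is false in general: for $k=3$, $u$ spans $S_2,S_3$ and $w$ spans $S_1,S_2$, so they share $S_2$ and \emph{are} adjacent. Likewise, ``the leftmost section's exclusive vertices'' does not name anything usable, since every vertex of a \emph{Q}-node spans at least two consecutive sections, so no vertex is exclusive to $S_1$. The repair is to drop the section-vertex detour entirely and use the objects you already identified: take $z_1\in V_1$ and $z_2\in V_k$. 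These lie in distinct leaf subtrees hanging off the first and last sections, hence appear in no common maximal clique, so $(z_1,z_2)\notin E(G)$; and since $node(x)=node(y)=P$ is an ancestor of both, $x$ and $y$ are over both, and Observation~\ref{obs:InducedC4} applies. This is exactly what the paper does.
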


\begin{proof}
Let $P$ be the considered common \emph{P}-node, and assume it has $j$ subtrees for some $j \geq 1$.
If $j \geq 2$, then by Lemma \ref{lemma:QNodeProperties}i let $z_1 \in V_1$ and $z_2 \in V_2$.
Clearly, there is no edge between $z_1$ and $z_2$ and both $x$ and $y$ are over $z_1$ and $z_2$.
Hence, Observation \ref{obs:InducedC4} implies that $\brac{x,y}$ is not an interval edge.
Thus, $P$ has exactly one subtree, and according to Lemma \ref{lemma:QNodeProperties}h,
its root has to be a \emph{Q}-node, see Figure \ref{fig:InternalPNode}.
Moreover, Lemma \ref{lemma:QNodeProperties}a implies, that the first and last sections of a \emph{Q}-node have nonempty subtrees.
Let $z_1$ belong to the first subtree, and $z_2$ belong to the last one.
Yet again, conditions of the Observation \ref{obs:InducedC4} are satisfied, so $\brac{x,y}$ is not an interval edge.
\end{proof}

\begin{figure}[h]
\begin{center}
\begin{scriptsize}
\begin{tikzpicture}

    \node (p) [shape=circle,draw=black] at (1,1.8) {\small $x,y$};
    \draw [-] (1,1.4) -- (1,1);

    \draw [-] (0,0.5) -- (2,0.5);
    \draw [-] (0,1) -- (2,1);
    \draw [-] (0,0.5) -- (0,1);
    \draw [-] (0.5,0.5) -- (0.5,1);
    \draw [-] (1.5,0.5) -- (1.5,1);
    \draw [-] (2,0.5) -- (2,1);
    \node at (0.25, 0.75) {\small $S_1$};
    \node at (1,0.75) {\small \ldots};
    \node at (1.75, 0.75) {\small $S_k$};

    \draw [-] (0.25,0.2) -- (0.25,0.5);
    \draw [-] (1.75,0.2) -- (1.75,0.5);

    \node at (0.25,0) {\small $T_1$};
    \node at (1.75,0) {\small $T_k$};


    \node (T) [shape=circle,draw=black] at (4,1.8) {\small $x,y$};
    \node (T1) at (3.5,1) {\small $T_1$};
    \node (T2) at (4.5,1) {\small $T_2$};

    \path [-] (T) edge (T1);
    \path [-] (T) edge (T2);

\end{tikzpicture}
\end{scriptsize}
\end{center}
\caption{Removing an edge from a \emph{P}-node that is not a leaf leads to an induced $C_4$ cycle.}
\label{fig:InternalPNode}
\end{figure}
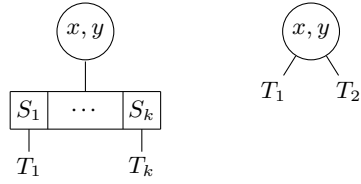

\begin{lemma}
\label{lemma:RemovingEdgePLeaf}
If $node\brac{x} = node\brac{y}$ is a \emph{P}-node that is a leaf, then $\brac{x,y}$ is an interval edge.
Moreover, there is a linear time algorithm that produces a string representation for the graph $G - \brac{x,y}$.
\end{lemma}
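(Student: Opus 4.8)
The plan is to produce a string representation of $G-\brac{x,y}$ explicitly and then verify, adjacency by adjacency, that it is correct; the conclusions that $G-\brac{x,y}$ is an interval graph and that $\brac{x,y}$ is an interval edge follow immediately. First I would fix notation. Let $P$ be the leaf \emph{P}-node containing $x$ and $y$, with vertices $z_1,\ldots,z_k$ (so $k\ge 2$) listed in the order in which they occur in the string representation $\mathcal{S}$ of $\mathcal{T}$. By the definition of the string representation of a leaf \emph{P}-node, $\mathcal{S}$ contains the contiguous block $\beta = z_1 z_2\cdots z_k z_k z_{k-1}\cdots z_1$, and these are the only occurrences of $z_1,\ldots,z_k$. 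Let $C$ be the maximal clique of $G$ that the leaf $P$ represents and put $B = C\setminus\set{z_1,\ldots,z_k}$. I would record two facts. (i) Each $z_i$ lies in exactly one maximal clique, namely $C$, so $N_G\sbrac{z_i}=C$; in particular no vertex outside $C$ is adjacent to any $z_i$, and $I_{z_1}$ covers exactly the positions of $\beta$. (ii) Every $b\in B$ satisfies $I_b\supseteq I_{z_1}$: since $b$ is adjacent to $z_k$, whose interval consists of the two central positions of $\beta$, and since $\beta$ is contiguous in $\mathcal{S}$ so that both endpoints of $I_b$ lie outside $\beta$, the interval $I_b$ must contain all of $\beta$. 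Combining (i) with ``$I_{z_1}$ covers $\beta$'' also shows that no vertex outside $C$ has an interval meeting the block $\beta$ at all (it would meet $I_{z_1}$ and hence be adjacent to $z_1$).

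Next I would give the construction. Let $z'_1,\ldots,z'_{k-2}$ be the vertices of $P$ other than $x$ and $y$, in arbitrary order, and let $\mathcal{S}'$ be obtained from $\mathcal{S}$ by replacing the block $\beta$ by $\beta' = z'_1\cdots z'_{k-2}\,x\,x\,y\,y\,z'_{k-2}\cdots z'_1$ (which is simply $x\,x\,y\,y$ when $k=2$). Both $\beta$ and $\beta'$ have length $2k$, so the rest of $\mathcal{S}$ is untouched, and $\mathcal{S}'$ is clearly produced in $\Oh{n}$ time.

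Finally I would check that $\mathcal{S}'$ represents $G-\brac{x,y}$. Since $\mathcal{S}'$ differs from $\mathcal{S}$ only by a rearrangement inside $\beta$, the only vertices whose intervals change are $x,y,z'_1,\ldots,z'_{k-2}$, all of which still lie inside the region occupied by $\beta$; hence the only pairs whose adjacency can change are pairs with an endpoint in $C$. Within $C$: the $z'_i$ remain nested, hence pairwise adjacent; both $x$ and $y$ are contained in every $I_{z'_i}$ and, by fact (ii), in every $I_b$ with $b\in B$, so $x$ and $y$ are adjacent to all of $C\setminus\set{x,y}$; and $I_x$ lies strictly to the left of $I_y$ in $\beta'$, so $x$ is not adjacent to $y$ — which is exactly $G-\brac{x,y}$ restricted to $C$. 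For a vertex $w\notin C$, $I_w$ does not meet the region occupied by $\beta$ (by the last remark of the first paragraph, and rearranging occurrences inside $\beta$ does not change this), so $w$ is adjacent to none of $x,y,z'_1,\ldots,z'_{k-2}$, exactly as in $G$. Every remaining pair involves neither a moved interval nor the removed edge, so its adjacency is unchanged. Thus $\mathcal{S}'$ is a string representation of $G-\brac{x,y}$, so this graph is an interval graph and $\brac{x,y}$ is an interval edge. The only delicate point is the pair of structural facts — that $\beta$ occurs as one contiguous block with no foreign endpoints inside it, that $I_{z_1}$ covers exactly $\beta$, and hence that $\beta$ is ``private'' to the vertices of $C$ — but these follow directly from the definition of the string representation of a leaf \emph{P}-node, after which the verification is routine.
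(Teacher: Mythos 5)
Your proof is correct, and it reaches the same target by a genuinely different mechanism than the paper. The paper works with the canonical string $\mathcal{S}=LAxByC\bar{C}y\bar{B}x\bar{A}R$ and removes the edge by a \emph{sequence of adjacent transpositions}: it repeatedly swaps two consecutive left endpoints or two consecutive right endpoints (moving the two copies of $y$ together, then sliding $x$ past), each step preserving the graph by Lemma~\ref{lemma:SwappingLemma}, and finally performs exactly one swap of a left endpoint of $x$ with a right endpoint of $y$ to delete $\brac{x,y}$. You instead replace the entire block $\beta=z_1\cdots z_kz_k\cdots z_1$ in one step with $\beta'=z'_1\cdots z'_{k-2}\,x\,x\,y\,y\,z'_{k-2}\cdots z'_1$, and then verify \emph{directly} (pair by pair, using the structural facts that a leaf $P$-node's vertices lie in a single maximal clique $C$, that $I_{z_1}$ spans exactly the block, that every $b\in B=C\setminus\set{z_1,\dots,z_k}$ has $I_b\supseteq\beta$, and that no $w\notin C$ meets the block) that the new string represents $G-\brac{x,y}$. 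The trade-off: the paper's route is shorter and reuses the swapping lemma, which is also the engine for Lemmas~\ref{lemma:RemovingEdgeQNode} and~\ref{lemma:RemoveEdgePPath}, giving a uniform style; your route is more elementary and self-contained (it does not invoke the swapping lemma at all), at the cost of an explicit adjacency check, and it makes the ``privacy'' of the $P$-node's block to the clique $C$ explicit, which is a nice observation in its own right. Both run in $\Oh{n}$ time.
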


\begin{proof}
Without loss of generality assume that $x < y$, and consider the canonical string $\mathcal{S}$ for the \emph{MPQ}-tree $\mathcal{T}$.
Clearly, $\mathcal{S}$ is of the form $\mathcal{S} = LAxByC\bar{C}y\bar{B}x\bar{A}R$,
see Figure \ref{fig:StringPNodeInternal}.
In order to remove the edge $\brac{x,y}$ we find the first and the last occurrence of $x$ in $\mathcal{S}$.
Then, until $x$ does not occupy two consecutive positions, we swap $\mathcal{S}\sbrac{i}$ with $\mathcal{S}\sbrac{i+1}$,
and $\mathcal{S}\sbrac{j}$ with $\mathcal{S}\sbrac{j-1}$, where $i$ denotes the first occurrence of $x$ and $j$ denotes the second.
Next, we do the same for $y$, and as a result we get a string such that both $y$'s are next to each other and are surrounded by both $x$'s,
see Figure \ref{fig:StringPNodeInternal}c.
Finally, we swap the first occurrence of $y$ with the second occurrence of $x$, effectively removing the edge $\brac{x,y}$.
Clearly, this procedure runs in $\Oh{n}$ time, but we have to ensure that it does not add or remove any other edge.
Note that, all modifications are performed in a substring of $\mathcal{S}$ that represents a clique, and is of the form $z_{1}z_{2}\ldots z_{k}z_{k}\ldots z_{2}z_{1}$.
Hence, each swapping operation - except the last one - swapped either two left endpoints or two right endpoints.
Thus, Lemma \ref{lemma:SwappingLemma} ensures that no edge was added or removed during this process.
Finally, during the last swap $x$ and $y$ occupy four consecutive indexes.
Hence, the only affected vertices are $x$ and $y$.

\begin{figure}[h]
\begin{center}
\begin{small}
\begin{tikzpicture}

	\draw [decorate, decoration = {brace,amplitude=3}, xshift=0, yshift=-4] (-1.85,3.5) -- (-1.45,3.5) node [black, midway, yshift = 10] {\textit{prefix}};
	\draw [decorate, decoration = {brace,amplitude=5}, xshift=0, yshift=-4] (-1.45,3.5) -- (1.4,3.5) node [black, midway, yshift = 10] {\textit{PNode}};
	\draw [decorate, decoration = {brace,amplitude=3}, xshift=0, yshift=-4] (1.4,3.5) -- (1.85,3.5) node [black, midway, yshift = 10] {\textit{sufix}};

    \node at (0,3)   {\large $L|AxByC\bar{C}y\bar{B}x\bar{A}|R$};
    \node at (0,2.5) {\large $L|AByCxx\bar{C}y\bar{B}\bar{A}|R$};
    \node at (0,2)   {\large $L|ABCxyyx\bar{C}\bar{B}\bar{A}|R$};
    \node at (0,1.5) {\large $L|ABCxxyy\bar{C}\bar{B}\bar{A}|R$};

	\node at (-2.3,3)   {\textit{a)}};
	\node at (-2.3,2.5) {\textit{b)}};
	\node at (-2.3,2)   {\textit{c)}};
	\node at (-2.3,1.5) {\textit{d)}};

\end{tikzpicture}
\end{small}
\end{center}
\caption{Removing an edge from a leaf \emph{P}-node.}
\label{fig:StringPNodeInternal}
\end{figure}

\end{proof}


\subsection{$x$ and $y$ belong to the same \emph{Q}-node}
The next case is when both vertices belong to the same \emph{Q}-node.
Here we show that $\brac{x,y}$ is an interval edge if and only if $x$ and $y$ have exactly one common section
and the subtree of this section represents a clique (possibly empty).

\begin{lemma}
\label{lemma:RemovingEdgeQFail}
If $node\brac{x} = node\brac{y}$ is a \emph{Q}-node,
then $\brac{x,y}$ is not an interval edge if:
\begin{enumerate}
    \item $x$ and $y$ have more than one common section in $node\brac{x} = node\brac{y}$, or
    \item a subtree of the common section does not represent a clique (possibly empty).
\end{enumerate}
\end{lemma}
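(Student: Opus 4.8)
The plan is to dispose of each of the two cases by exhibiting an induced $C_4$ in $G - (x,y)$ and invoking Observation \ref{obs:InducedC4}; that is, in each case I will produce two vertices $z_1, z_2$ with no edge between them such that both $x$ and $y$ are over each of $z_1$ and $z_2$. Since $x$ is over $y$ means $node(x)$ is the LCA of $node(x)$ and $node(y)$, and here $node(x) = node(y) = Q$, the vertex $x$ being ``over'' a vertex $z$ in a subtree $T_i$ of $Q$ just means $x$ is contained in section $S_i$ (equivalently $l(x) \le i \le r(x)$), and similarly for $y$. So in both cases the task reduces to finding a section index that lies in the common range of $x$ and $y$ and whose subtree contains suitable vertices with no edge between them.

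For case (1), suppose $x$ and $y$ have at least two common sections, say $S_a$ and $S_b$ with $a < b$ among the sections containing both. I would first handle the situation where some subtree $T_i$ with $a \le i \le b$ is nonempty: pick $z_1 \in V_i$; then $x$ and $y$ are both over $z_1$. To find a second vertex $z_2$ not adjacent to $z_1$, I appeal to the structure of $Q$: by Lemma \ref{lemma:QNodeProperties}a the first and last subtrees $T_1, T_k$ are nonempty, so at least one of them, say $T_1$, is disjoint from $T_i$ as a subtree index unless $i = 1$; pick $z_2$ in that far subtree. Vertices in different subtrees of $Q$ are nonadjacent, so $(z_1,z_2) \notin E$, and I must check $x,y$ are over $z_2$ — this is where I need $x$ and $y$ to be contained in the relevant extreme section, which follows because $x$ and $y$ span from $S_a$ to $S_b$ and, combined with Lemma \ref{lemma:QNodeProperties}b ($S_1 \subset S_2$, etc.), one can push to an extreme section that still contains both. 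If instead all subtrees $T_a, \dots, T_b$ are empty, then since $S_a \ne S_b$ by repeated application of Lemma \ref{lemma:QNodeProperties}d and the sections between $S_a$ and $S_b$ genuinely change, I would instead locate the two nonadjacent $z_i$'s among vertices having a left or right endpoint strictly inside the range $(a,b)$; the point is that such a vertex $w$ with, say, $r(w) < b$ is nonadjacent to a vertex $w'$ with $l(w') > r(w)$, and both are over... — more carefully, I'd use that $x$ (being over $y$) together with two common sections forces at least two ``private'' vertices of $Q$ separating $a$ from $b$, and any two of these that do not share a section are nonadjacent while being dominated by both $x$ and $y$. This case analysis on whether the in-between subtrees are empty is the main obstacle, and I expect to lean on conditions (e) and (f) of Lemma \ref{lemma:QNodeProperties} to guarantee the needed nonadjacent pair exists.

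For case (2), suppose $x$ and $y$ have exactly one common section $S_c$, but its subtree $T_c$ does not represent a clique (and is nonempty). A graph fails to be a clique iff it contains two nonadjacent vertices, so pick $z_1, z_2 \in V_c$ with $(z_1,z_2) \notin E$. Both $x$ and $y$ are contained in $S_c$, hence both are over every vertex in the subtree $T_c$, in particular over $z_1$ and over $z_2$. By Observation \ref{obs:InducedC4}, $(x,y)$ is not an interval edge. This case is short and I foresee no difficulty with it; the only thing to remark is that $z_1, z_2$ need not be adjacent to each other within $G$ precisely because they sit below a single section and a single Q-node child, so the only edges forced on them come from ancestor sections, none of which changes their mutual (non)adjacency.

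Finally I would note that the two items are not mutually exclusive with the positive case of the next lemma, and that together they cover exactly the complement (within the ``same Q-node'' case) of ``exactly one common section whose subtree is a possibly-empty clique'', so combined with the forthcoming positive lemma this yields the full characterization promised before Lemma \ref{lemma:RemovingEdgeQFail}.
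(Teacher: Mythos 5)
Your case (2) argument is correct and matches the paper's. For case (1), however, the central step of your first plan---picking $z_2$ from a far subtree $V_1$ or $V_k$---does not go through. For the $C_4$ argument you need $(x,z_2)$ and $(y,z_2)$ to be edges, which requires $x$ and $y$ to belong to section $S_1$ (resp.\ $S_k$). But the inclusion $S_1 \subset S_2$ from Lemma~\ref{lemma:QNodeProperties}b runs the wrong way: it says every vertex of $S_1$ lies in $S_2$, not that membership in $S_2$ can be ``pushed'' to $S_1$. If $l(x) \geq 2$ then $x \notin S_1$, and a vertex of $V_1$ cannot serve as $z_2$. Your fallback idea---``private'' vertices of $Q$ separating $a$ from $b$---is in the right direction but left unworked, and this is precisely the part of the proof that needs Lemma~\ref{lemma:QNodeProperties}f.

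The paper's argument is as follows. It first disposes of case (2) and of the subcase of case (1) where two distinct common sections both carry nonempty subtrees, using your $C_4$ mechanism with $z_1,z_2$ from the subtrees. Afterwards, at most one common section has a nonempty subtree, so with common sections $S_i,\ldots,S_j$ one may assume (reversing $Q$ if needed) that $T_i$ is empty. Lemma~\ref{lemma:QNodeProperties}f then yields a vertex $z_1 \in S_i \setminus S_{i+1}$: a vertex of $Q$ itself with rightmost section $S_i$. Since $x,y\in S_j$ with $j>i$, $z_1 \notin \{x,y\}$, and $(x,z_1),(y,z_1)\in E$ because all three share $S_i$. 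For $z_2$: if some $T_a$ with $i < a \le j$ is nonempty, take $z_2 \in V_a$; otherwise all $T_i,\ldots,T_j$ are empty and Lemma~\ref{lemma:QNodeProperties}f gives $z_2 \in S_j \setminus S_{j-1}$. Either way $z_2$ shares a section with $x$ and $y$ but none with $z_1$, so $(z_1,z_2)\notin E$ and Observation~\ref{obs:InducedC4} applies. The idea you were missing is to take the witnesses from the sections of $Q$ itself rather than from subtrees outside the range common to $x$ and $y$.
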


\begin{proof}
Assume that there is a common section $S_i$ and its nonempty subtree $T_i$ that does not represent a clique.
Hence, there are at least two vertices $z_1$ and $z_2$ in $V_i$ such that there is no edge between them, otherwise $T_i$ would represent a clique.
Thus, Observation \ref{obs:InducedC4} implies that $\brac{x,y}$ is not an interval edge.
Moreover, if there are two common sections $S_i$ and $S_j$ such that both of them have nonempty subtrees $T_i$ and $T_j$ respectively,
then we may choose $z_1 \in V_i$ and $z_2 \in V_j$ and use the same argument.
This proves that common sections have empty subtrees except at most one which represents a clique.
Now, assume that there is more than one common section ie. $S_i,S_{i+1},\ldots,S_{j-1},S_j$, and without loss of generality $S_i$ has an empty subtree.
Lemma \ref{lemma:QNodeProperties}f implies that there is a vertex $z_1$ for which the section $S_i$ is the last one
($z_1$ does not belong to sections $S_{i+1},\ldots,S_{j}$).
Notice that $z_1 \notin \set{x,y}$, otherwise $j=i$.
If there is a nonempty subtree $T_a$ for some $i < a \leq j$, then we choose $z_2 \in V_a$,
and Observation \ref{obs:InducedC4} leads to an induced chordless cycle.
Hence, all common subtrees are empty and Lemma \ref{lemma:QNodeProperties}f gives us a vertex $z_2$ for which $S_j$ is the first section.
Observation \ref{obs:InducedC4} for vertices $x,y,z_1$ and $z_2$ finishes the proof.
\end{proof}

\begin{lemma}
\label{lemma:RemovingEdgeQNode}
If $node\brac{x} = node\brac{y}$ is a \emph{Q}-node,
$x$ and $y$ have exactly one common section $S_i$ in it,
and the subtree $T_i$ represents a clique,
then $\brac{x,y}$ is an interval edge.
Moreover, there is a linear time algorithm that produces a string representation for the graph $G - \brac{x,y}$.
\end{lemma}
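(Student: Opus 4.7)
The plan is to construct a string representation of $G - \brac{x,y}$ directly from a given string representation of $\mathcal{T}$ by applying adjacent swaps, each justified by Lemma~\ref{lemma:SwappingLemma}, and ending with a single swap that removes the edge $\brac{x,y}$ and nothing else. Since $x$ and $y$ both belong to the same \emph{Q}-node and share the unique section $S_i$, a WLOG argument (swapping the roles of $x$ and $y$ if necessary) lets me assume $r\brac{x} = i = l\brac{y}$; the other two endpoints $l\brac{x}$ and $r\brac{y}$ may lie inside section $i$ or strictly outside it. Writing the section-$i$ substring as $\sbrac{L_i}\sbrac{T_i\text{-string}}\sbrac{R_i}$, the first occurrence of $y$ is one of the left-endpoint entries of $L_i$ and the second occurrence of $x$ is one of the right-endpoint entries of $R_i$. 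The hypothesis that $T_i$ represents a (possibly empty) clique means that $T_i$ is either empty or a single leaf \emph{P}-node, so its string contribution has the form $c_1 c_2 \ldots c_q c_q c_{q-1} \ldots c_1$: a block $C$ of $q$ consecutive left endpoints immediately followed by the reverse block $\bar{C}$ of $q$ consecutive right endpoints.

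The algorithm proceeds in five steps. First, reorder $L_i$ by left-left swaps so that $y$'s first occurrence becomes the last element of $L_i$. Second, reorder $R_i$ by right-right swaps so that $x$'s second occurrence becomes the first element of $R_i$. Third, swap $y$'s first occurrence rightward through every element of $C$ (left-left swaps). Fourth, swap $x$'s second occurrence leftward through every element of $\bar{C}$ (right-right swaps). After these moves, the substring has the shape $\ldots \sbrac{L_i \setminus \set{y}}\, C\, y_L\, x_R\, \bar{C}\, \sbrac{R_i \setminus \set{x}} \ldots$, with $y_L$ and $x_R$ adjacent; since every swap so far is between two endpoints of the same type, Lemma~\ref{lemma:SwappingLemma} guarantees that no edge of $G$ has been altered. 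Fifth, swap the adjacent pair $y_L, x_R$: because $y_L$ is a left endpoint immediately followed by the right endpoint $x_R$, Lemma~\ref{lemma:SwappingLemma} tells us that this final swap removes exactly the edge $\brac{x,y}$ and no other.

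The total number of swaps is $\Oh{\norm{L_i} + \norm{R_i} + 2q + 1} = \Oh{\norm{S_i} + \norm{V\brac{T_i}}} = \Oh{n}$, and since each swap touches only two consecutive positions of the underlying array, the procedure runs in linear time; in fact the output string can simply be written out in one pass without actually simulating the individual swaps. The main obstacle I expect in filling in the details is the case analysis for the configurations in which $x$ or $y$ is contained entirely in $S_i$ (that is, $l\brac{x} = i$ or $r\brac{y} = i$): in these sub-cases the ``extra'' occurrence of $x$ or $y$ also sits inside one of the manipulated blocks $L_i$ or $R_i$, and I must verify that this extra occurrence is always passed over only by same-type swaps, so that the new interval of $x$ still meets every vertex in $\brac{S_i \cup V\brac{T_i}} \setminus \set{x,y}$ and symmetrically for $y$. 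In every configuration, however, the final geometry is the same: $x$'s new interval ends strictly before $y$'s new interval begins, while both still span the clique $T_i$ and overlap everything they overlapped in $G$, so the only change in neighbourhoods is the removal of $\brac{x,y}$.
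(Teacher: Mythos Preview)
Your approach is essentially the same as the paper's: push the first occurrence of $y$ rightward through the left-endpoint block $L_i$ and the clique prefix $C$, push the second occurrence of $x$ leftward through the right-endpoint block $R_i$ and the clique suffix $\bar{C}$, and perform one final left--right swap to delete exactly the edge $\brac{x,y}$, with Lemma~\ref{lemma:SwappingLemma} justifying every intermediate step.

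One remark: the ``obstacle'' you flag --- the sub-cases $l\brac{x}=i$ or $r\brac{y}=i$ --- cannot actually occur. A vertex assigned to a \emph{Q}-node (as opposed to the subtree of a section) always spans at least two consecutive sections, so $l\brac{x}<r\brac{x}$ and $l\brac{y}<r\brac{y}$; since the shared section is unique and your WLOG sets $r\brac{x}=i=l\brac{y}$, you automatically have $l\brac{x}<i<r\brac{y}$. Hence the ``extra'' occurrences of $x$ and $y$ lie strictly outside the section-$S_i$ substring and are never touched by your swaps, so the case analysis you worried about is vacuous.
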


\begin{proof}
Again, we assume that $x < y$ and consider the canonical string $\mathcal{S}$ for the \emph{MPQ}-tree $\mathcal{T}$,
but in this case $\mathcal{S}$ has a more complex form than in the Lemma \ref{lemma:RemovingEdgePLeaf}.
In fact, it is of the form $P_{1}xP_{2}L_{1}yL_{2}V_{i}\bar{V_{i}}R_{1}xR_{2}S_{1}yS_{2}$,
where $L_{1} \cup L_{2}$ represents the left endpoints of vertices from $S_i$,
$R_{1} \cup R_{2}$ represents the right endpoints of vertices from $S_i$,
and $V_{i} \cup \bar{V_{i}}$ represents a clique from the subtree, see Figure \ref{fig:StringQNodeInternal}.
In order to remove the edge $\brac{x,y}$, at first we need to determine for each element in $\mathcal{S}$
whether it represents the left or the right endpoint.
It can be easily done in $\Oh{n}$, since all elements in $\mathcal{S}$ belong to the set $\set{1,\ldots,n}$.
The next phase swaps the first occurrence of $y$ with its successor until the next element represents a right endpoint.
Analogously, we swap the second occurrence of $x$ with its predecessor until the next element represents a left endpoint.
Clearly, because of Lemma \ref{lemma:SwappingLemma} we did not add or remove any edge till this moment,
and $\mathcal{S}$ looks like in Figure \ref{fig:StringQNodeInternal}c.
Finally, we can remove the edge $\brac{x,y}$ by swapping the first occurrence of $y$ with the second occurrence of $x$,
that in fact occupy consecutive positions in $\mathcal{S}$.

\begin{figure}[h]
\begin{center}
\begin{small}
\begin{tikzpicture}


	\draw [decorate, decoration = {brace,amplitude=5}, xshift=0, yshift=-4] (-2.75,3.5) -- (-1.5,3.5) node [black, midway, yshift = 10] {\textit{prefix}};
	\draw [decorate, decoration = {brace,amplitude=5}, xshift=0, yshift=-4] (-1.5,3.5) -- (1.5,3.5) node [black, midway, yshift = 10] {\textit{section $S_{i}$}};
	\draw [decorate, decoration = {brace,amplitude=5}, xshift=0, yshift=-4] (1.5,3.5) -- (2.75,3.5) node [black, midway, yshift = 10] {\textit{sufix}};

    \node at (0,3)   {\large $P_{1}xP_{2}|L_{1}yL_{2}V_{i}\bar{V_{i}}R_{1}xR_{2}|S_{1}yS_{2}$};
    \node at (0,2.5) {\large $P_{1}xP_{2}|L_{1}L_{2}V_{i}y\bar{V_{i}}R_{1}xR_{2}|S_{1}yS_{2}$};
    \node at (0,2)   {\large $P_{1}xP_{2}|L_{1}L_{2}V_{i}yx\bar{V_{i}}R_{1}R_{2}|S_{1}yS_{2}$};
    \node at (0,1.5) {\large $P_{1}xP_{2}|L_{1}L_{2}V_{i}xy\bar{V_{i}}R_{1}R_{2}|S_{1}yS_{2}$};

	\node at (-3,3)   {\textit{a)}};
	\node at (-3,2.5) {\textit{b)}};
	\node at (-3,2)   {\textit{c)}};
	\node at (-3,1.5) {\textit{d)}};


	\draw [-] (4.5,3) -- (8.5,3);
	\draw [-] (4.5,2) -- (8.5,2);

	\draw [-] (5.5,2) -- (5.5,3);
	\draw [-] (7.5,2) -- (7.5,3);

	\draw [-|] (4.7,2.8) -- (7,2.8);
	\draw [-|] (4.7,2.6) -- (6.6,2.6);
	\draw [|-] (6.4,2.4) -- (8.3,2.4);
	\draw [|-] (6,2.2) -- (8.3,2.2);
	\node at (7.2, 2.8) {$R$};
	\node at (6.8, 2.6) {$x$};
	\node at (6.2, 2.4) {$y$};
	\node at (5.8, 2.2) {$L$};

	\draw [-] (6.5,2) -- (6.5,1.6);
	\node at (6.5,1.2) {\large $T_i$};
	\node at (6.3,3.5) {\large $S_i$};
	\node at (5,2.3) {\ldots};
	\node at (8,2.7) {\ldots};

\end{tikzpicture}
\end{small}
\end{center}
\caption{Removing an edge from the same \emph{Q}-node.}
\label{fig:StringQNodeInternal}
\end{figure}
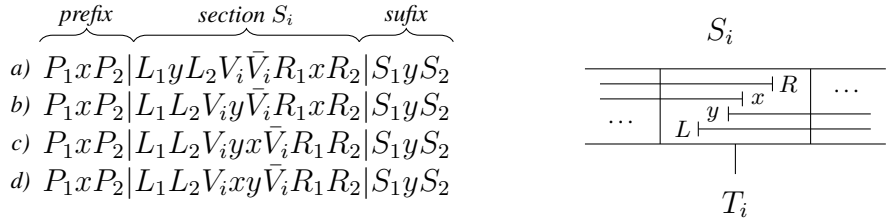

\end{proof}


\subsection{$x$ and $y$ belong to different nodes.}
In the previous two subsections, we provided a full classification for the cases where $x$ and $y$ belong to the same node in $\mathcal{T}$.
In this subsection, we consider the cases where $x$ and $y$ belong to different nodes in $\mathcal{T}$.
Before we present our results in those cases, we introduce a terminology that allows us to describe a relative position of $node\brac{x}$ and $node\brac{y}$ in $\mathcal{T}$.

For a \emph{Q}-node with $k$ sections $S_1,\ldots,S_k$,
we say that the section $S_{a}$ is a \emph{central} section if $1 < a < k$.
Sections $S_{1}$ and $S_{k}$ are called \emph{non-central} sections.
For a vertex $v$ 
we say that the section $S_a$ is a \emph{$v$-central} section if $l\brac{v} < a < r\brac{v}$.
Sections $S_{l\brac{v}}$ and $S_{r\brac{v}}$ are \emph{$v$-non-central} sections.
For every two vertices $x$ and $y$, there is exactly one path in the tree $\mathcal{T}$ between $node\brac{x}$ and $node\brac{y}$.
We say that this unique path is an \emph{$\tbrac{x,y}$-tree-path} if $x$ is over $y$ in $\mathcal{T}$.
For an $\tbrac{x,y}$-tree-path: $node\brac{x} = n_1 - n_2 - \ldots - n_t = node\brac{y}$,
we say that this path \emph{goes through the central section} if there is a \emph{Q}-node $n_i$ for $1 < i < t$ such that $n_{i+1}$ belongs to a subtree of some central section of $n_i$,
see Figure \ref{fig:PathThroughCentralSection}.
Moreover, we say that the path \emph{starts in a central section} if $n_1$ is a \emph{Q}-node, and $n_2$ belongs to a subtree of some $x$-central section.
Analogously, it \emph{starts in a non-central section} if $n_2$ belongs to a subtree of some $x$-non-central section.
We also say that an $\tbrac{x,y}$-tree-path \emph{starts} in a \emph{P}-node, if $node\brac{x}$ is a \emph{P}-node,
and \emph{ends} in a \emph{P}-node if $node\brac{y}$ is a \emph{P}-node.
Analogous definitions apply to \emph{Q}-nodes.
Finally, we say that an $\tbrac{x,y}$-tree-path is \emph{almost rotable} if it does not go through the central section, and ends in a \emph{P}-node that is a leaf.
An $\tbrac{x,y}$-tree-path is \emph{rotable} if it is almost rotable, and either starts in a \emph{P}-node, or starts in a non-central section.
Intuitively, if an $\tbrac{x,y}$-tree-path is almost rotable, then we are able to rotate all the nodes on the
path $n_2 - \ldots - n_t$ in such a way that $y$ is the leftmost vertex in the subtree which root is $n_2$.


\medskip
Now we are ready to characterize interval edges in the case where $x$ and $y$ belong to different nodes in $\mathcal{T}$.
We prove that if $\tbrac{x,y}$-tree-path is rotable, then $\brac{x,y}$ is an interval edge.
Unfortunately, the reverse implication is not true and sometimes $\tbrac{x,y}$-tree-path is not rotable, but $\brac{x,y}$ is still an interval edge.
We shall prove that this happens only for almost rotable $\tbrac{x,y}$-tree-paths satisfying some additional, and quite technical, conditions.
First, we show how to compute a string representation for an interval graph $G - \brac{x,y}$ if $\tbrac{x,y}$-tree-path is rotable.

\begin{lemma}
\label{lemma:RemoveEdgePPath}
If an $\tbrac{x,y}$-tree-path is rotable, then $\brac{x,y}$ is an interval edge.
Moreover, there is a linear time algorithm that produces a string representation for the graph $G - \brac{x,y}$.
\end{lemma}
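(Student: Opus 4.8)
The plan is to first rotate $\mathcal{T}$ into a rigid shape using graph‑preserving \emph{PQ}‑rotations, and then to write down an interval representation of $G-\brac{x,y}$ explicitly; the algorithm will emit the string of that representation.

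\emph{Straightening the path.} Since the $\tbrac{x,y}$‑tree‑path is almost rotable, it never descends into a central section and $n_t=node\brac{y}$ is a leaf \emph{P}‑node. For each node $n_i$ with $2\le i\le t-1$: if $n_i$ is a \emph{Q}‑node I reverse it, if needed, so that the subtree containing $n_{i+1}$ hangs from its first section (legitimate precisely because the path avoids central sections); if $n_i$ is a \emph{P}‑node I make that subtree its leftmost child. Finally I order the vertices of the leaf $n_t$ so that $y$ is first. All of these are equivalence operations, so $G$ is unchanged, and now $node\brac{y}$ is the leftmost leaf of the subtree rooted at $n_2$ and $y$ is its first vertex. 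Next I normalize $n_1=node\brac{x}$. If $n_1$ is a \emph{P}‑node, I put the subtree $T_{n_2}$ first among its children and list $x$ last among the vertices of $n_1$. If $n_1$ is a \emph{Q}‑node, then since the path is rotable it starts in a non‑central section, so after possibly reversing $n_1$ the subtree $T_{n_2}$ hangs from $S_{l\brac{x}}$, and I list $x$ last among the vertices starting in $S_{l\brac{x}}$. In either case the string now reads $\mathcal{S}=\alpha\,x\,\sigma\,\beta$, where the displayed $x$ is the left endpoint of $x$, $\sigma$ encodes the subtree rooted at $n_2$ and begins with the left endpoint of $y$, and the right endpoint of $x$ lies in $\beta$; in particular $I_y\subsetneq I_x$. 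This is the step that uses the full strength of \emph{rotable} rather than merely \emph{almost} rotable: it guarantees that no part of $n_1$ is stranded to the right of $\sigma$ except along the single child or section we control.

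\emph{Removing the edge.} Let $C$ be the leftmost maximal clique of the subtree rooted at $n_2$; after the rotation $y\in C$, and every vertex of $C\setminus\set{y}$ has its interval containing the leftmost point of $\sigma$. One computes that $N_G\brac{y}=\brac{C\setminus\set{y}}\cup A$, where $A$ is the set of vertices whose interval contains the whole span of $\sigma$ (equivalently, the vertices of the nodes strictly above $n_2$ that cover $T_{n_2}$, a set that contains $x$). I would now rewrite $\mathcal{S}$ as follows: detach the leaf $n_t$; ``pull $y$ through'' the left spine of $T_{n_2}$ and past the right endpoint of $x$, so that the vertices of $C\setminus\set{y}$ become spanning sections of a \emph{Q}‑node; reattach $y$ as a pendant leaf on the far side of $x$; and slide the remaining children of $n_1$ (respectively, the remaining sections of the \emph{Q}‑node $n_1$) to the side of $x$ opposite to $y$. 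The effect on the string is that $I_y$ becomes a short interval meeting exactly the intervals of $\brac{C\setminus\set{y}}\cup A$ and disjoint from $I_x$. A tiny instance, $\mathcal{S}=x\,y\,a\,a\,y\,b\,b\,x$ turning into $\mathcal{S}'=b\,x\,b\,a\,x\,y\,a\,y$, already shows that no sequence of valid swaps can remove $\brac{x,y}$ here, and that both the stretching of $C\setminus\set{y}$ and the relocation of $b$ are forced. Each of the constantly many blocks of $\mathcal{S}$ involved is produced by a single linear scan, so the construction runs in $\Oh{n}$ time.

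\emph{Correctness and the main obstacle.} For the blocks of $\mathcal{S}$ that are only permuted I would invoke Lemma~\ref{lemma:SwappingLemma} to conclude that no adjacency changes. The substance is to show that, after the re‑threading, the new intervals realize exactly $G-\brac{x,y}$, which I would split into: (i) $I_x\cap I_y=\emptyset$; (ii) $y$ is adjacent to precisely $\brac{C\setminus\set{y}}\cup A$, for which it is essential both that the rotation has given $N_G\brac{y}$ this clean description and that rotability makes the intervals of $A$ protrude past the left endpoint of $x$ on the correct side; and (iii) every two vertices other than $x,y$ keep their relationship --- in particular the stretched vertices of $C\setminus\set{y}$ stay disjoint from the relocated children/sections of $n_1$, which is exactly where the hypothesis that the path avoids central sections (so the entire left spine can be rotated outward coherently) is indispensable. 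I expect (iii), carried out uniformly over the shapes of the spine (\emph{P}‑nodes carrying their own vertices, nested \emph{Q}‑nodes, and the two possibilities for $n_1$), to be the main obstacle, and to require a case analysis mirroring the structure of the path; the almost‑rotable‑but‑not‑rotable situations fail exactly one of (i)--(iii), which is why the subsequent lemmas must impose extra technical conditions on them.
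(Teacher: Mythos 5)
Your proposal starts like the paper's (rotate so the $\tbrac{x,y}$-tree-path runs through leftmost children/sections, put $node\brac{y}$ as the leftmost leaf, and use rotability to align $node\brac{x}$), but then it abandons the swap-based strategy and replaces it with a much heavier tree surgery, and that replacement is both unnecessary and, as written, incomplete.

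The pivotal error is your claim that the ``tiny instance'' $\mathcal{S}=x\,y\,a\,a\,y\,b\,b\,x$ shows that no sequence of swaps can remove $\brac{x,y}$ and that $b$ must be relocated. That is false. Following the paper's procedure: first merge the two $y$'s inward via graph-preserving swaps, $x\,y\,a\,a\,y\,b\,b\,x \to x\,a\,y\,a\,y\,b\,b\,x \to x\,a\,y\,y\,a\,b\,b\,x$; then move the left endpoint of $x$ to the right past the left endpoints $a$ and $y$, $\to a\,x\,y\,y\,a\,b\,b\,x \to a\,y\,x\,y\,a\,b\,b\,x$; and finally perform the single non-preserving swap of $x$'s left endpoint with $y$'s right endpoint, $\to a\,y\,y\,x\,a\,b\,b\,x$. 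The resulting intervals are $a=\sbrac{1,5}$, $y=\sbrac{2,3}$, $x=\sbrac{4,8}$, $b=\sbrac{6,7}$, realizing exactly $G-\brac{x,y}$, with $b$ never moved. The whole point of Lemma~\ref{lemma:SwappingLemma} plus the rotation is that after the rotation everything between the left endpoint of $x$ and the left endpoint of $y$ is itself a left endpoint (leftmost sections of \emph{Q}-nodes contain no right endpoints because $S_1\subset S_2$, and \emph{P}-node prefixes are all left endpoints), so a straight run of same-type swaps brings the left endpoint of $x$ adjacent to the right endpoint of $y$, and exactly one final swap deletes exactly $\brac{x,y}$. Your ``forced relocation'' conclusion, and the ``detach/re-thread/pendant leaf'' machinery built on it, are not needed.

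Beyond that, even taking your route on its own terms, you do not complete the argument: you state explicitly that establishing condition (iii) --- that all vertex pairs other than $\set{x,y}$ keep their adjacency after the re-threading --- is the ``main obstacle'' and would ``require a case analysis mirroring the structure of the path,'' and you do not carry it out. That is precisely the correctness content of the lemma, and it is left as a gap. In the paper's proof there is no such obstacle to negotiate, because every intermediate swap is certified by Lemma~\ref{lemma:SwappingLemma} to leave the represented graph unchanged, and the final swap is a single adjacent transposition of a left and a right endpoint, which affects only the pair $\brac{x,y}$.
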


\begin{proof}
In order to remove the edge $\brac{x,y}$, we do not produce a canonical string $\mathcal{S}$ immediately.
At first, we need to adjust the tree $\mathcal{T}$ using some preprocessing.
If $node\brac{x}$ is a \emph{Q}-node, then we rotate $node\brac{x}$ so that $\tbrac{x,y}$-tree-path starts in the section $S_{l\brac{x}}$.
Then, we rotate $\mathcal{T}$ so that the path from $node\brac{x}$ to $node\brac{y}$ goes through the leftmost children of \emph{P}-nodes and the leftmost sections of \emph{Q}-nodes.
Let $\mathcal{T}'$ be the result of the described adjustment, and let $\mathcal{S}$ be a string representation of $\mathcal{T}'$.
Clearly, $\mathcal{S}$ is of form $LxAByC\bar{C}y\bar{B}DxR$, see Figure \ref{fig:StringPNodePath}, and both occurrences of $y$ lay in between occurrences of $x$ in $\mathcal{S}$.
Node $node\brac{y}$ is a \emph{P}-node that is a leaf, so we start with moving the first occurrence of $y$ to the right and the second occurrence of $y$ to the left until they both meet, as in the Lemma \ref{lemma:RemovingEdgePLeaf}.
All vertices that lay between the first occurrence of $x$ and the first occurrence of $y$ represent the left endpoints.
Hence, we can swap the first occurrence of $x$ with its successor until it meets the second occurrence of $y$.
Lemma \ref{lemma:SwappingLemma} ensures that no edge is added or removed during this process.
Finally, moving $x$ once more to the right swaps the left endpoint of $x$ with the right endpoint of $y$ effectively removing the edge $\brac{x,y}$.

\begin{figure}[h]
\begin{center}
\begin{small}
\begin{tikzpicture}

	\draw [decorate, decoration = {brace,amplitude=3}, xshift=0, yshift=-4] (-1.8,3.5) -- (-1,3.5) node [black, midway, yshift = 10] {\textit{prefix}};
	\draw [decorate, decoration = {brace,amplitude=5}, xshift=0, yshift=-4] (-1,3.5) -- (0.9,3.5) node [black, midway, yshift = 10] {\textit{$node\brac{y}$}};
	\draw [decorate, decoration = {brace,amplitude=3}, xshift=0, yshift=-4] (0.9,3.5) -- (1.9,3.5) node [black, midway, yshift = 10] {\textit{sufix}};

	\draw [decorate, decoration = {brace,amplitude=3}, xshift=0, yshift=-4] (-1.8,1.8) -- (-1.2,1.8) node [black, midway, yshift = 10] {\textit{prefix}};
	\draw [decorate, decoration = {brace,amplitude=5}, xshift=0, yshift=-4] (-1.2,1.8) -- (0.9,1.8) node [black, midway, yshift = 10] {\textit{$node\brac{y} + x$}};
	\draw [decorate, decoration = {brace,amplitude=3}, xshift=0, yshift=-4] (0.9,1.8) -- (1.9,1.8) node [black, midway, yshift = 10] {\textit{sufix}};

	\node at (0,3)   {\large $LxA|ByC\bar{C}y\bar{B}|DxR$};
	\node at (0,2.5) {\large $LxA|BCyy\bar{C}\bar{B}|DxR$};

	\node at (0,1.3)   {\large $LA|BCyxy\bar{C}\bar{B}|DxR$};
	\node at (0,0.8) {\large $LA|BCyyx\bar{C}\bar{D}|DxR$};

	\node at (-2.3,3)   {\textit{a)}};
	\node at (-2.3,2.5) {\textit{b)}};
	\node at (-2.3,1.3) {\textit{c)}};
	\node at (-2.3,0.8) {\textit{d)}};

\end{tikzpicture}
\end{small}
\end{center}
\caption{Removing an edge $\brac{x,y}$ in the case where $\tbrac{x,y}$-tree-path is rotable.}
\label{fig:StringPNodePath}
\end{figure}

\end{proof}

Now, we consider those $\tbrac{x,y}$-tree-paths that are not rotable, but are almost rotable.
Hence, all those paths start in some $x$-central section of some \emph{Q}-node.
We denote $S_1,\ldots,S_k$ the sections of considered \emph{Q}-node, and $S_a$ the $x$-central section where the $\tbrac{x,y}$-tree-path starts.
As we already mentioned, sometimes in that case the edge $\brac{x,y}$ is an interval edge.
The next two lemmas establish the required conditions for that to happen.

\begin{lemma}
\label{lemma:RemoveEdgeInnerSectionNoNeighbour}
If an $\tbrac{x,y}$-tree-path is almost rotable, starts in a central section $S_{a}$, $y$ has no neighbor in a subtree $T_a$ and:
\begin{enumerate}
    \item $\exists_{1 < b \leq l\brac{x}}: S_{a} \setminus \set{x} \subset S_{b}$ and $S_{b-1} \cap S_{b} \subset S_{a}$, or
    \item $\exists_{r\brac{x} \leq b < k}: S_{a} \setminus \set{x} \subset S_{b}$ and $S_{b} \cap S_{b+1} \subset S_{a}$, or
    \item $S_{a} \setminus \set{x} \subset S_1$, or
    \item $S_{a} \setminus \set{x} \subset S_k$,
\end{enumerate}
then $\brac{x,y}$ is an interval edge.
Moreover, there is a linear time algorithm that produces a string representation for the graph $G - \brac{x,y}$.
\end{lemma}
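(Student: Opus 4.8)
The plan is to reduce to two representative sub-cases, then to produce the desired string by excising the two occurrences of $y$ and re-inserting them at a carefully chosen place, and finally to read off that this place is right exactly when one of conditions (1)--(4) holds.

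\emph{Reduction.} Conditions (1) and (2) are interchanged by reversing the \emph{Q}-node $Q = node\brac{x}$, as are conditions (3) and (4). Reversing a \emph{Q}-node is a legal \emph{MPQ}-operation, so it does not change $G$, and it preserves the hypotheses: a section $S_j$ is $x$-central iff $S_{k+1-j}$ is (using that reversal sends $l\brac{x},r\brac{x}$ to $k+1-r\brac{x},k+1-l\brac{x}$), and it does not affect whether the $\tbrac{x,y}$-tree-path goes through a central section of an interior node or ends in a leaf \emph{P}-node. Hence I may assume condition (1) or (3) holds, and handle (2), (4) by the mirror construction.

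\emph{A structural claim.} Fix a string representation $\mathcal{S}$ of $\mathcal{T}$. The block of $\mathcal{S}$ belonging to $Q$ is the concatenation, over $j=1,\ldots,k$, of the section blocks $L_j\cdot\brac{\text{string of }T_j}\cdot R_j$, where $L_j$ lists the left endpoints of the vertices $v$ with $l\brac{v}=j$ and $R_j$ the right endpoints of those with $r\brac{v}=j$. Since the path starts in $S_a$, both occurrences of $y$ lie inside the string of $T_a$, hence inside $S_a$'s block. I claim the neighbourhood of $y$ in $G$ equals $N\cup S_a$, where $N$ is the set of vertices of proper ancestors of $Q$ that are active throughout $Q$'s block. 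Indeed, every vertex of $S_a$ is active on the whole string of $T_a$; every proper ancestor of $Q$ is either active everywhere on $Q$'s block or nowhere on it (so the ancestor contribution at $y$'s position is exactly $N$); no vertex of $Q$ or of some $T_j$ with $j\neq a$ is active at a position inside $S_a$'s block; and the only vertices of $T_a$ active at $y$'s position are neighbours of $y$ in $T_a$, of which there are none by hypothesis. Since $x\in S_a$ while $x\notin N$, removing the edge $\brac{x,y}$ amounts to producing a string in which $y$'s interval spans exactly $\brac{S_a\setminus\set{x}}\cup N$.

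\emph{The construction.} Delete both occurrences of $y$ from $\mathcal{S}$; the result $\mathcal{S}_0$ is a string representation of $G-y$, since the relative order of all remaining elements, hence every overlap among them, is untouched. By the Swapping Lemma, the left endpoints inside a single block $L_j$ may be permuted arbitrarily without changing the represented graph, and I use this to re-insert a fresh adjacent pair $y\,y$ at a gap $g$ inside $Q$'s block whose active set among vertices appearing in that block equals exactly $S_a\setminus\set{x}$; since $g$ lies inside $Q$'s block, the ancestors active at $g$ are exactly $N$, so in the resulting string the neighbourhood of $y$ is $\brac{S_a\setminus\set{x}}\cup N$ and no other adjacency changes, so the string represents $G-\brac{x,y}$. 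Under condition (3), $S_a\setminus\set{x}\subseteq S_1$ and $L_1$ is the left-endpoint list of all of $S_1$; I reorder $L_1$ so that $S_a\setminus\set{x}$ comes first and insert $y\,y$ immediately after it, and since nothing of $Q$ is active before $L_1$ the active set there is $S_a\setminus\set{x}$. Under condition (1), at the boundary between $S_{b-1}$ and $S_b$ the active set is $S_{b-1}\cap S_b$; the hypothesis $S_{b-1}\cap S_b\subseteq S_a$ together with $x\notin S_{b-1}$ (which holds because $b\leq l\brac{x}$) gives $S_{b-1}\cap S_b\subseteq S_a\setminus\set{x}$, while every remaining vertex of $S_a\setminus\set{x}$ lies in $S_b$ but not $S_{b-1}$ (by $S_a\setminus\set{x}\subseteq S_b$ and consecutiveness of sections), hence has its left endpoint in $S_b$ and appears in $L_b$; I reorder $L_b$ to list these first and insert $y\,y$ right after them, making the active set $\brac{S_{b-1}\cap S_b}\cup\brac{\text{rest of }S_a\setminus\set{x}}=S_a\setminus\set{x}$. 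Every step runs in $\Oh{n}$: one pass to locate $y$ and classify each endpoint as left or right, one pass to read off the relevant sections, and $\Oh{n}$ work to delete, reorder, and re-insert.

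\emph{Main obstacle.} The delicate point is the verification in the last paragraph that the chosen gap $g$ has active set precisely $S_a\setminus\set{x}$; this is exactly where conditions (1)--(4) and the \emph{Q}-node properties of Lemma \ref{lemma:QNodeProperties} are consumed --- specifically the consecutiveness of the sections containing a vertex, the identity $S_j=\brac{S_{j-1}\cap S_j}\cup L_j$, and $x\notin S_{b-1}$. One must also confirm the routine facts that deleting the two occurrences of $y$ leaves a string representation of $G-y$ and that permuting an $L_j$-block is harmless, the latter again by the Swapping Lemma. The cases governed by conditions (2) and (4) are obtained from those of (1) and (3) by the \emph{Q}-node reversal of the reduction step.
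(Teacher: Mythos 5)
Your proof is correct and amounts to the same construction as the paper's, viewed at the string level instead of the tree level. The paper inserts a fresh section $S^{*}=S_a\setminus\set{x}$ between $S_{b-1}$ and $S_b$ (or at an end of the \emph{Q}-node in cases (3)--(4)), moves $y$ into a one-vertex subtree hanging off $S^{*}$, and outputs the string of the modified tree; the string this produces is exactly your $\mathcal{S}_0$ with $y\,y$ re-inserted at the gap inside $L_b$ (resp.\ $L_1$) after the left endpoints of $S_a\setminus\set{x}$, so the two arguments coincide. (Minor nit: in your structural claim the clause ``no vertex of $Q$ \dots is active at a position inside $S_a$'s block'' should be restricted to vertices of $Q$ not in $S_a$; your intent is clear from the preceding clause.)
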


\begin{proof}
Proofs for all four cases are very similar, so we show only the proof for the first case.
Assume that there is a $1 < b \leq l\brac{x}$ such that $S_{a} \setminus \set{x} \subset S_{b}$ and $S_{b-1} \cap S_{b} \subset S_{a}$.
As in the proof of Lemma \ref{lemma:RemoveEdgePPath}, before we produce the string representation $\mathcal{S}$,
we need to make some adjustments in the tree $\mathcal{T}$.
At first, we insert a new section $S^{*} = S_{a} \setminus \set{x}$ in between sections $S_{b-1}$ and $S_{b}$, see Figure \ref{fig:StringQPathNoEdge}.
Clearly, an insertion of a new section does not remove the old edges, but it might add some new ones.
However, the section $S^{*}$ is a subset of an already existing section, so it is not the case.
Moreover, the condition that each vertex belongs to the sequence of consecutive sections is preserved.
That's because we assumed that $S_{b-1} \cap S_{b} \subset S_{a}$ and $S_{a} \setminus \set{x} \subset S_{b}$.
Next, we remove the vertex $y$ from the subtree $T_a$.
We also define a subtree $T^{*}$ of the section $S^{*}$ to be a single \emph{P}-node containing $y$.
Note that $y$ has no neighbor in $T_a$, so no edges were removed, except the edge $\brac{x,y}$.
Thus, we obtained a tree $\mathcal{T}'$ that encodes the graph $G - \brac{x,y}$.
In order to get the string representation for the graph $G - \brac{x,y}$ it is enough to compute the string representation for $\mathcal{T}'$.
We proved only the first case, but observe that the second case is symmetric, and cases 3 and 4 are the corner cases, so in the third case we simply insert the section $S^{*}$ before $S_1$ and in the fourth case we insert $S^{*}$ after $S_k$.

\begin{figure}[h]
\begin{center}
\begin{scriptsize}
\begin{tikzpicture}

	\draw [-] (0,2) -- (6.9,2);
	\draw [-] (0,0.8) -- (6.9,0.8);

    \draw [-] (0.75,0.8) -- (0.75,2);
    \node at (1.1,2.3) {\small $S_{b-1}$};
    \draw [-] (1.5,0.8) -- (1.5,2);
    \node at (1.9,2.3) {\small $S_{b}$};
    \draw [-] (2.25,0.8) -- (2.25,2);
    \draw [-] (3.4,0.8) -- (3.4,2);
    \node at (3.75,2.3) {\small $S_{l\brac{x}}$};
    \draw [-] (4.15,0.8) -- (4.15,2);
    \draw [-] (5.4,0.8) -- (5.4,2);
    \node at (5.75,2.3) {\small $S_{a}$};
    \draw [-] (6.15,0.8) -- (6.15,2);

	\node at (5.75, 1.9) {$x$};
	\draw [|-] (3.6,1.8) -- (6.7,1.8);

    \draw [-|] (0.2,1.8) -- (1.3,1.8);
    \draw [|-|] (1.7,1.8) -- (3,1.8);
    \draw [|-] (1.8,1.6) -- (6.4,1.6);
    \draw [|-,dotted] (1,1.4) -- (6.4,1.4);
    \draw [-|,dotted] (0.2,1.2) -- (6,1.2);
    \draw [|-,dotted] (1.7,1) -- (6.4,1);

    \draw [-] (1.1,0.8) -- (1.1,0.6);
    \node at (1.1,0.35) {\small $T_{b-1}$};
    \draw [-] (1.9,0.8) -- (1.9,0.6);
    \node at (1.9,0.35) {\small $T_{b}$};
    \draw [-] (3.75,0.8) -- (3.75,0.6);
    \node at (3.75,0.35) {\small $T_{l\brac{x}}$};
    \draw [-] (5.75,0.8) -- (5.75,0.6);
    \node at (5.75,0.35) {\small $T_{a}$};

	\draw [-] (8,2) -- (15.65,2);
	\draw [-] (8,0.8) -- (15.65,0.8);

    \draw [-] (8.75,0.8) -- (8.75,2);
    \node at (9.1,2.3) {\small $S_{b-1}$};
    \draw [-] (9.5,0.8) -- (9.5,2);
    \node at (9.9,2.3) {\small $S^{*}$};
    \draw [-] (10.25,0.8) -- (10.25,2);
    \node at (10.65,2.3) {\small $S_{b}$};
    \draw [-] (11,0.8) -- (11,2);
    \draw [-] (12.15,0.8) -- (12.15,2);
    \node at (12.5,2.3) {\small $S_{l\brac{x}}$};
    \draw [-] (12.9,0.8) -- (12.9,2);
    \draw [-] (14.15,0.8) -- (14.15,2);
    \node at (14.5,2.3) {\small $S_{a}$};
    \draw [-] (14.9,0.8) -- (14.9,2);

	\node at (14.5, 1.9) {$x$};
	\draw [|-] (12.4,1.8) -- (15.45,1.8);

    \draw [-|] (8.2,1.8) -- (9.3,1.8);
    \draw [|-|] (10.45,1.8) -- (11.75,1.8);
    \draw [|-] (10.75,1.6) -- (15.15,1.6);
    \draw [|-,dotted] (9,1.4) -- (15.15,1.4);
    \draw [-|,dotted] (8.2,1.2) -- (14.75,1.2);
    \draw [|-,dotted] (9.7,1) -- (15.15,1);

    \draw [-] (9.1,0.8) -- (9.1,0.6);
    \node at (9.1,0.35) {\small $T_{b-1}$};
    \draw [-] (9.9,0.8) -- (9.9,0.6);
    \node [draw,circle] at (9.9,0.35) {\small $y$};
    \draw [-] (10.65,0.8) -- (10.65,0.6);
    \node at (10.65,0.35) {\small $T_{b}$};
    \draw [-] (12.5,0.8) -- (12.5,0.6);
    \node at (12.5,0.35) {\small $T_{l\brac{x}}$};
    \draw [-] (14.5,0.8) -- (14.5,0.6);
    \node at (14.5,0.35) {\small $T_{a} - y$};

    \node at (7.45,1.4) {\small $\rightarrow$};

\end{tikzpicture}
\end{scriptsize}
\end{center}
\caption{Removing an edge $\brac{x,y}$ in the case where $\tbrac{x,y}$-tree-path is almost rotable, starts in some $x$-central section $S_a$ and $y$ does not have a neighbor in a subtree $T_a$. (Case 1)}
\label{fig:StringQPathNoEdge}
\end{figure}
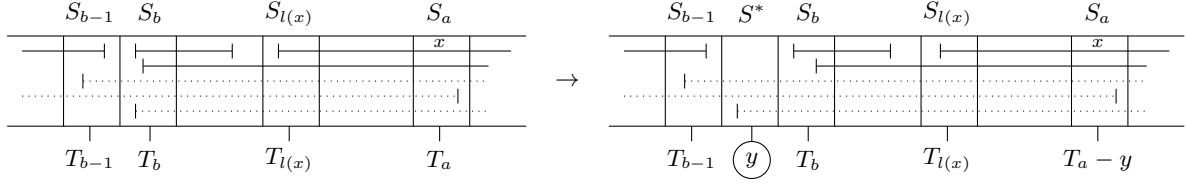

\end{proof}


\begin{lemma}
\label{lemma:RemoveEdgeInnerSectionNeighbour}
If an $\tbrac{x,y}$-tree-path is almost rotable, starts in a central section $S_{a}$, $y$ has a neighbor in a subtree $T_a$ and:
\begin{enumerate}
 \item $l\brac{x} > 1$ and $S_{a} \setminus \set{x} \subset S_{l\brac{x}}$ and $S_{l\brac{x}-1} \cap S_{l\brac{x}} \subset S_{a}$, or
 \item $r\brac{x} < k$ and $S_{a} \setminus \set{x} \subset S_{r\brac{x}}$ and $S_{r\brac{x}} \cap S_{r\brac{x}+1} \subset S_{a}$, or
 \item $l\brac{x} = 1$ and $S_{a} \setminus \set{x} \subset S_1$, or
 \item $r\brac{x} = k$ and $S_{a} \setminus \set{x} \subset S_k$,
\end{enumerate}
then $\brac{x,y}$ is an interval edge.
Moreover, there is a linear time algorithm that produces a string representation for the graph $G - \brac{x,y}$.
\end{lemma}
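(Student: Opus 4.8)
The plan is to mimic the structure of the proof of Lemma \ref{lemma:RemoveEdgeInnerSectionNoNeighbour}, but to handle the extra difficulty that $y$ now has a neighbour inside the subtree $T_a$. In the previous lemma we could simply lift $y$ out of $T_a$ and drop it into a brand-new singleton section $S^{*}=S_a\setminus\set{x}$, because $y$ was not adjacent to anything in $T_a$ and so no edge other than $(x,y)$ was affected. Here that move would destroy edges between $y$ and its neighbours in $V_a$. The key observation is that in the four cases listed, $S^{*}=S_a\setminus\set{x}$ is not inserted as a fresh interior section but is adjacent to (or coincides with) the section $S_{l(x)}$ (resp. $S_{r(x)}$, resp. $S_1$, resp. $S_k$) that already borders $S_a$ after we rotate so that the $\tbrac{x,y}$-tree-path starts in $S_{l(x)}$ — i.e. the new section sits right next to the old section $S_a$ itself. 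Concretely, first I would rotate $node(x)$ so that the $\tbrac{x,y}$-tree-path leaves through $S_{l(x)}$ (case 1/3) or $S_{r(x)}$ (case 2/4); this is legitimate since the path is almost rotable. I would then, as in Lemma \ref{lemma:RemoveEdgeInnerSectionNoNeighbour}, insert the section $S^{*}=S_a\setminus\set{x}$ immediately before $S_{l(x)}$ in case 1 (between $S_1$ and $S_2$ in case 3; symmetric placements in cases 2 and 4). The containments $S_a\setminus\set{x}\subset S_{l(x)}$ and $S_{l(x)-1}\cap S_{l(x)}\subset S_a$ guarantee, exactly as before, that inserting $S^{*}$ adds no edge and keeps every vertex on a consecutive run of sections.

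Next, instead of attaching a lonely $y$ to $S^{*}$, I would move the \emph{entire subtree} $T_a$ from under section $S_a$ to under the new section $S^{*}$, and additionally place $y$ itself into the section $S^{*}$ (so that $y$ now ``covers'' all of $T_a$ from $S^{*}$). Then I remove $y$ from the section $S_a$, i.e. from its original \emph{Q}-node. I would argue the effect on the edge set is precisely: (i) the edges between $y$ and $V_a$ are preserved, because $y$ in section $S^{*}$ sits above the whole of $T_a$; (ii) the edges between $y$ and the other vertices that were in $S_a$ — which are exactly $S_a\setminus\set{x}$ together with $x$ — are preserved for $S_a\setminus\set{x}$ because $S_a\setminus\set{x}\subset S^{*}$'s neighbouring section, and the edge $(x,y)$ is removed, which is what we want; (iii) $y$ had no other neighbours (it was a leaf \emph{P}-node at the end of an almost-rotable path), so nothing else changes. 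I would also check that $V_a$ together with $y$ still forms a valid hanging subtree of $S^{*}$: $T_a$ was a subtree, and adding $y$ as a common ancestor in the section only adds edges from $y$, already accounted for. The result is a valid \emph{MPQ}-tree $\mathcal{T}'$ encoding $G-(x,y)$, and computing its string representation in $\Oh{n}$ (via the algorithm behind the Corollary to Theorem \ref{thm:LinearMPQTree}) gives the claimed linear-time construction. The argument for case 2 is the mirror image using $S_{r(x)}$; cases 3 and 4 are the boundary instances where $S^{*}$ is prepended before $S_1$ or appended after $S_k$, and no condition on a bordering section $S_{l(x)-1}$ or $S_{r(x)+1}$ is needed.

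I expect the main obstacle to be verifying cleanly that moving $T_a$ (rather than a single vertex) under $S^{*}$ does not silently introduce spurious edges or break the ``consecutive sections'' invariant — in particular one must be careful about vertices of $S_a$ that also lie in $S_{l(x)}$ or further left: the hypothesis $S_a\setminus\set{x}\subset S_{l(x)}$ is exactly what makes the old section $S_a$, now holding only $x$ on that side, and the new section $S^{*}$ consistent with one another, so that every vertex still occupies an interval of sections. A secondary point worth stating explicitly is why $y$ being a leaf \emph{P}-node (from ``almost rotable, ends in a \emph{P}-node that is a leaf'') means $y$'s only neighbours outside $T_a$ are the vertices on the $\tbrac{x,y}$-tree-path ancestors, and that after the rotation these are handled correctly by placing $y$ in $S^{*}$. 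Once those invariants are nailed down, the construction and its $\Oh{n}$ running time follow immediately, just as in Lemma \ref{lemma:RemoveEdgeInnerSectionNoNeighbour}.
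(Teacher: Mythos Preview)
Your construction has two genuine gaps, both caused by relocating $T_a$ to a section that no longer contains $x$.

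\emph{First gap.} Once $T_a$ hangs below $S^{*}=S_a\setminus\set{x}$, which you place outside the range $S_{l\brac{x}},\ldots,S_{r\brac{x}}$, \emph{every} vertex $v\in V_a$ loses its edge to $x$, not only $y$. In your items (i)--(iii) you track only the edges incident to $y$ and never notice that all edges $\brac{x,v}$ with $v\in V_a\setminus\set{y}$ have been destroyed. Since $y$ has a neighbour in $T_a$, this set is nonempty, so the graph you build is strictly smaller than $G-\brac{x,y}$.

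\emph{Second gap.} Promoting $y$ into the section $S^{*}$ makes $y$ adjacent to every vertex of $V_a\setminus\set{y}$, because $y$ now sits above the whole of $T_a$. But originally $y$ is adjacent only to the vertices lying in nodes on the root-to-$node\brac{y}$ path inside $T_a$; as soon as $T_a$ has any branching off that path, your move adds spurious edges. So your claim (i), that ``the edges between $y$ and $V_a$ are preserved'', is false in general.

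The paper's argument confronts exactly the first of these points. It relocates the whole section $S_a$ (together with $T_a$) to sit just before $S_{l\brac{x}}$ and then deletes $x$ from the relocated $S_a$; this too severs every edge $\brac{x,v}$ with $v\in V_a$. The idea you are missing is how to restore all of them except $\brac{x,y}$: use the almost-rotable hypothesis on the path \emph{inside} $T_a$, rotate $T_a$ so that $y$ becomes its leftmost vertex (after the centering swaps of Lemma~\ref{lemma:RemovingEdgePLeaf}, the second occurrence of $y$ is the first right endpoint appearing in $T_a$'s string), and then in the string representation slide the first occurrence of $x$ leftwards, past every left endpoint coming from $T_a$, until its predecessor is that second occurrence of $y$. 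Each such swap is between two left endpoints and hence legal by Lemma~\ref{lemma:SwappingLemma}; the net effect is to re-insert precisely the edges $\brac{x,v}$ for $v\in V_a\setminus\set{y}$ while leaving $\brac{x,y}$ absent. No promotion of $y$ into a section is needed, so the second gap never arises.
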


\begin{proof}
The proof of this lemma is similar to the proof of Lemma \ref{lemma:RemoveEdgeInnerSectionNoNeighbour},
but now $y$ has at least one neighbor in $T_a$, so we cannot simply remove $y$ from $T_a$.
That's also the reason why conditions of this lemma are more strict than in the previous one.
Yet again, we are going to prove only the first case,
so assume that $S_{a} \setminus \set{x} \subset S_{l\brac{x}}$ and $S_{l\brac{x}-1} \cap S_{l\brac{x}} \subset S_{a}$.
Instead of inserting a new section, we remove the section $S_{a}$ and insert it in between sections $S_{l\brac{x}-1}$ and $S_{l\brac{x}}$, see Figure \ref{fig:StringQPathEdge}.
Clearly, no edge is added or removed, and the condition that each vertex belongs to the sequence of consecutive sections is preserved.
Now, we remove $x$ from the section $S_{a}$.
This, removes the edge $\brac{x,y}$, but also all the edges $\brac{x,v}$, where $v \in V_{a}$.
In the next phase, we are going to restore those edges.
In order to do it, at first we rotate the subtree $T_{a}$ in such a way that the path from $node\brac{x}$ to $node\brac{y}$ goes
through the leftmost children of \emph{P}-nodes and the leftmost sections of \emph{Q}-nodes.
Then, we compute the string representation $\mathcal{S}$ of the modified tree,
and move the first occurrence of $y$ to the right and the second occurrence of $y$ to the left until both meet, as in the Lemma \ref{lemma:RemovingEdgePLeaf}.
After this operation is done, $y$ is the leftmost vertex of the tree $T_{a}$ - its right endpoint appears first in the string representation of $T_a$.
In order to restore all the removed edges except $\brac{x,y}$, we move the first occurrence of $x$ in $\mathcal{S}$ to the left,
until its predecessor is the second occurrence of $y$.
Clearly, this procedure restores all the removed edges except $\brac{x,y}$.

\begin{figure}[h]
\begin{center}
\begin{scriptsize}
\begin{tikzpicture}

	\draw [-] (0,1.8) -- (4.9,1.8);
	\draw [-] (0,0.8) -- (4.9,0.8);

    \draw [-] (0.75,0.8) -- (0.75,1.8);
	\node at (1,2.1) {\small $S_{l\brac{x}-1}$};
    \draw [-] (1.5,0.8) -- (1.5,1.8);
	\node at (2,2.1) {\small $S_{l\brac{x}}$};
    \draw [-] (2.25,0.8) -- (2.25,1.8);
    \draw [-] (3.4,0.8) -- (3.4,1.8);
    \node at (3.75,2.1) {\small $S_{a}$};
    \draw [-] (4.15,0.8) -- (4.15,1.8);

	\node at (3.75, 1.7) {$x$};
	\draw [|-] (1.7,1.6) -- (4.7,1.6);

    \draw [-|] (0.2,1.6) -- (1.3,1.6);
    \draw [|-,dotted] (1,1.4) -- (4.5,1.4);
    \draw [-|,dotted] (0.2,1.2) -- (4,1.2);
    \draw [|-,dotted] (1.8,1) -- (4.5,1);

    \draw [-] (1.1,0.8) -- (1.1,0.6);
	\node at (1,0.35) {\small $T_{l\brac{x}-1}$};
    \draw [-] (1.9,0.8) -- (1.9,0.6);
	\node at (2,0.35) {\small $T_{l\brac{x}}$};
    \draw [-] (3.75,0.8) -- (3.75,0.6);
    \node at (3.75,0.35) {\small $T_{a}$};

	\draw [-] (6,1.8) -- (10.9,1.8);
	\draw [-] (6,0.8) -- (10.9,0.8);

    \draw [-] (6.75,0.8) -- (6.75,1.8);
	\node at (7,2.1) {\small $S_{l\brac{x}-1}$};
    \draw [-] (7.5,0.8) -- (7.5,1.8);
    \node at (7.9,2.1) {\small $S_{a}$};
    \draw [-] (8.25,0.8) -- (8.25,1.8);
	\node at (8.75,2.1) {\small $S_{l\brac{x}}$};
    \draw [-] (9,0.8) -- (9,1.8);

	\node at (9.8, 1.7) {$x$};
	\draw [|-] (8.5,1.6) -- (10.7,1.6);

    \draw [-|] (6.2,1.6) -- (7.3,1.6);
    \draw [|-,dotted] (7,1.4) -- (10.5,1.4);
    \draw [-|,dotted] (6.2,1.2) -- (10.5,1.2);
    \draw [|-,dotted] (7.7,1) -- (10.5,1);

    \draw [-] (7.1,0.8) -- (7.1,0.6);
	\node at (7,0.35) {\small $T_{l\brac{x}-1}$};
    \draw [-] (7.9,0.8) -- (7.9,0.6);
    \node at (7.9,0.35) {\small $T_a$};
    \draw [-] (8.65,0.8) -- (8.65,0.6);
    \node at (8.75,0.35) {\small $T_{l\brac{x}}$};

    \node at (5.5,1.3) {\small $\rightarrow$};

\end{tikzpicture}
\end{scriptsize}
\end{center}
\caption{Removing an edge $\brac{x,y}$ in the case where $\tbrac{x,y}$-tree-path is almost rotable, starts in some $x$-central section $S_a$, and $y$ has some neighbor in a subtree $T_a$. (Case 1)}
\label{fig:StringQPathEdge}
\end{figure}
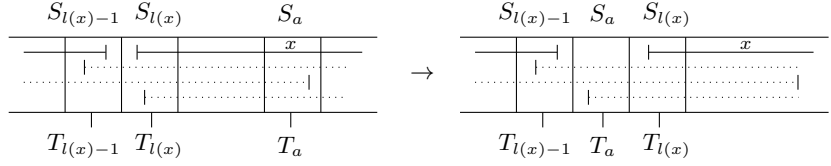

\end{proof}

The above two lemmas show the only cases where $\tbrac{x,y}$-tree-path is not rotable, but $\brac{x,y}$ is an interval edge.
Now, we are going to show that if $\tbrac{x,y}$-tree-path is not rotable, and the conditions of those lemmas are not satisfied, then $\brac{x,y}$ is not an interval edge.
At first, we prove that if $\tbrac{x,y}$-tree-path does not end in a \emph{P}-node that is a leaf, then $\brac{x,y}$ is not an interval edge.

\begin{lemma}
\label{lemma:IntervalEdge_QLeaf}
If an $\tbrac{x,y}$-tree-path ends in a \emph{Q}-node, then $\brac{x,y}$ is not an interval edge.
\end{lemma}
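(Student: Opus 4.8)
The plan is to reduce the statement to Observation \ref{obs:InducedC4}. By hypothesis the $\tbrac{x,y}$-tree-path ends in a \emph{Q}-node, which by our conventions means exactly that $node\brac{y}$ is a \emph{Q}-node; call it $Q$ and let $S_1,\ldots,S_k$ be its sections with corresponding subtrees $T_1,\ldots,T_k$ (recall $k \geq 3$, so $T_1$ and $T_k$ are distinct subtrees). Since we are in the case where $x$ and $y$ lie in different nodes and $x$ is over $y$, the node $node\brac{x}$ is a proper ancestor of $Q$ in $\mathcal{T}$.

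First I would apply Lemma \ref{lemma:QNodeProperties}a to obtain $V_1 \neq \emptyset$ and $V_k \neq \emptyset$, and pick any $z_1 \in V_1$ and $z_2 \in V_k$. Both $node\brac{z_1}$ and $node\brac{z_2}$ are strict descendants of $Q$, hence also strict descendants of $node\brac{x}$; consequently $node\brac{x}$ is the lowest common ancestor of $node\brac{x}$ and $node\brac{z_i}$, while $Q = node\brac{y}$ is the lowest common ancestor of $node\brac{y}$ and $node\brac{z_i}$, for $i \in \{1,2\}$. Thus both $x$ and $y$ are over $z_1$ and over $z_2$; in particular $z_1 \neq z_2$, $z_1,z_2 \notin \{x,y\}$, and all four of $\brac{x,z_1}$, $\brac{x,z_2}$, $\brac{y,z_1}$, $\brac{y,z_2}$ are edges of $G$.

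Next I would show that $\brac{z_1,z_2} \notin E\brac{G}$. Two vertices of an interval graph are adjacent precisely when some maximal clique --- that is, some leaf of $\mathcal{T}$ --- contains both of them; but every leaf containing $z_1$ lies in the subtree $T_1$ and every leaf containing $z_2$ lies in $T_k$, and $T_1$ and $T_k$ are disjoint subtrees of $\mathcal{T}$. Hence $z_1$ and $z_2$ share no maximal clique and are non-adjacent. With such a pair $z_1,z_2$ in hand, Observation \ref{obs:InducedC4} immediately gives that $\brac{x,y}$ is not an interval edge, since $x,z_1,y,z_2$ in that cyclic order induce a chordless $C_4$ in $G - \brac{x,y}$.

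The argument is short and I do not expect a genuine obstacle; the only point needing a little care is the non-adjacency of $z_1$ and $z_2$, which rests on the elementary fact that vertices taken from two distinct subtrees hanging off a common \emph{Q}-node never occur together in a maximal clique --- this is where the structure of the \emph{MPQ}-tree is used, beyond the bare statement of Lemma \ref{lemma:QNodeProperties}.
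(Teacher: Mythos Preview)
Your argument has a real gap: the step from ``$y$ is over $z_i$'' to ``$\brac{y,z_i}\in E\brac{G}$'' is not valid when $node\brac{y}$ is a \emph{Q}-node. A vertex $y$ assigned to $Q$ occupies only the sections $S_{l\brac{y}},\ldots,S_{r\brac{y}}$, and it is adjacent to a vertex of the subtree $T_i$ precisely when $l\brac{y}\le i\le r\brac{y}$. So if, say, $l\brac{y}>1$, then your $z_1\in V_1$ is \emph{not} a neighbour of $y$, and $x,z_1,y,z_2$ do not form a $C_4$ in $G-\brac{x,y}$; Observation~\ref{obs:InducedC4} simply does not apply. (The paper's informal remark that ``$x$ over $y$ implies $\brac{x,y}\in E\brac{G}$'' is only literally correct for \emph{P}-nodes, or when the descendant sits below a section that actually contains the \emph{Q}-node vertex; you have leaned on it too heavily.)

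The paper's proof avoids this by picking $z_1$ and $z_2$ relative to $y$'s own sections: using Lemma~\ref{lemma:QNodeProperties}f it takes $z_1\in\brac{S_{l\brac{y}}\cup V_{l\brac{y}}}\setminus S_{l\brac{y}+1}$ and $z_2\in\brac{S_{r\brac{y}}\cup V_{r\brac{y}}}\setminus S_{r\brac{y}-1}$ (falling back to $z_1\in V_1$ only when $y\in S_1$, and symmetrically on the right). With this choice both $\brac{y,z_1}$ and $\brac{y,z_2}$ are genuine edges, while $z_1$ and $z_2$ are separated by the sections strictly between $S_{l\brac{y}}$ and $S_{r\brac{y}}$ and hence non-adjacent, so Observation~\ref{obs:InducedC4} goes through. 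Your argument is salvageable, but only after this adjustment; as written it covers just the degenerate case $y\in S_1\cap S_k$.
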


\begin{proof}
If $y$ belongs only to central sections ($y \notin S_1 \cup S_k$),
then Lemma \ref{lemma:QNodeProperties}f implies that there are vertices $z_1 \in \brac{S_{l\brac{y}} \cup V_{l\brac{y}}} \setminus S_{l\brac{y}+1}$
and $z_2 \in \brac{S_{r\brac{y}} \cup V_{r\brac{y}}} \setminus S_{r\brac{y}-1}$.
If $y$ belongs to $S_1$ (or $S_k$), then just take $z_1 \in V_1$ (or $z_2 \in V_k$).
Clearly, there is no edge between $z_1$ and $z_2$, and both $x$ and $y$ are over $z_1$ and $z_2$.
Thus, Observation \ref{obs:InducedC4} finishes the proof.
\end{proof}

\begin{lemma}
\label{lemma:IntervalEdge_PNotLeaf}
If an $\tbrac{x,y}$-tree-path ends in a \emph{P}-node that is not a leaf,
then $\brac{x,y}$ is not an interval edge.
\end{lemma}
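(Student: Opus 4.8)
The plan is to reduce this to Observation~\ref{obs:InducedC4}, essentially repeating the argument of Lemma~\ref{lemma:RemovingEdgePInternal} but now with $x$ sitting in a proper ancestor of $node\brac{y}$ rather than in $node\brac{y}$ itself. Write $P = node\brac{y}$, and recall that since $x$ and $y$ lie in different nodes and $x$ is over $y$, the node $node\brac{x}$ is a proper ancestor of $P$ in $\mathcal{T}$. Hence for every vertex $z$ whose node lies strictly below $P$ (i.e.\ in a subtree rooted at a child of $P$), the node $node\brac{x}$ is an ancestor of $node\brac{z}$, so $x$ is over $z$; moreover $y$ is over $z$ as well, and $z \notin \set{x,y}$ because $node\brac{z}$ is a proper descendant of $P$ whereas $node\brac{x}$ is a proper ancestor of $P$. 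Thus it suffices to exhibit two such vertices $z_1, z_2$ with $\brac{z_1,z_2} \notin E\brac{G}$, and then Observation~\ref{obs:InducedC4} concludes that $\brac{x,y}$ is not an interval edge.

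Since $P$ is not a leaf it has $j \ge 1$ subtrees. First I would handle the case $j \ge 2$: by Lemma~\ref{lemma:QNodeProperties}i the subtrees $T_1$ and $T_2$ of $P$ are nonempty, so I pick $z_1 \in V_1$ and $z_2 \in V_2$; vertices lying in two different subtrees of a common node belong to disjoint sets of maximal cliques, hence $\brac{z_1,z_2}\notin E\brac{G}$, and we are done. The remaining case is $j = 1$. By Lemma~\ref{lemma:QNodeProperties}h the unique child of $P$ cannot be rooted at a \emph{P}-node, so it is rooted at a \emph{Q}-node $Q$; by Lemma~\ref{lemma:QNodeProperties}a the first and last subtrees of $Q$ are nonempty, so I pick $z_1$ in the subtree of the first section of $Q$ and $z_2$ in the subtree of the last section. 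Again these lie in different subtrees, hence $\brac{z_1,z_2}\notin E\brac{G}$, and Observation~\ref{obs:InducedC4} finishes the proof.

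I do not expect a genuine obstacle here: the only point requiring a little care is the bookkeeping that $x$ is indeed over the chosen witnesses $z_1,z_2$ and that $z_1,z_2\notin\set{x,y}$, both of which follow immediately from the ancestor/descendant relationship between $node\brac{x}$, $P$ and the $node\brac{z_i}$'s noted above. The structural facts that the two witnesses are non-adjacent are exactly the ones already used in the proof of Lemma~\ref{lemma:RemovingEdgePInternal}, so the write-up can largely mirror that proof; see also Figure~\ref{fig:InternalPNode}.
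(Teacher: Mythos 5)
Your proof is correct and follows exactly the paper's intended argument: the paper's proof of Lemma~\ref{lemma:IntervalEdge_PNotLeaf} simply states ``The same argument as in Lemma~\ref{lemma:RemovingEdgePInternal},'' and you have reproduced that argument, adapted to the setting where $node\brac{x}$ is a proper ancestor of $P = node\brac{y}$. The bookkeeping you add (checking that $x$ and $y$ are both over the witnesses $z_1,z_2$ and that $z_1,z_2 \notin \set{x,y}$) is precisely what makes the reduction to Observation~\ref{obs:InducedC4} go through.
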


\begin{proof}
The same argument as in Lemma \ref{lemma:RemovingEdgePInternal}.
\end{proof}

Now, we are going to prove that if an $\tbrac{x,y}$-tree-path goes through the central section,
then $\brac{x,y}$ is not an interval edge.
First, we define a \emph{nested path}.
Consider a \emph{Q}-node with $k$ sections $S_1,\ldots,S_k$.
For a clarity let $S_0 = S_{k+1} = \emptyset$, and say that a set of vertices
$\mathcal{P}_{i,j} = \brac{ S_{i} \cup \ldots \cup S_{j} } \setminus \brac{ S_{i-1} \cup S_{j+1}}$
is a \emph{nested path}, if for every $i \leq a < j$ there is at least one vertex $v_a \in \mathcal{P}_{i,j}$ that belongs to $S_{a} \cap S_{a+1}$.
We call it a nested path, because vertices $v_{i},v_{i+1}\ldots,v_{j-1}$ in that order form (possibly not simple) path in the graph $G$,
and all of them are contained in the sections $S_i,\ldots,S_j$.
In the following lemma, we show that if $\tbrac{x,y}$-tree-path goes through a central section of some \emph{Q}-node,
then we can find an asteroidal triple in the graph $G - \brac{x,y}$.
Nested paths help us to find this triple.

\begin{lemma}
\label{lemma:IntervalEdge_PathCentralSection}
If $\tbrac{x,y}$-tree-path goes through the central section, then $\brac{x,y}$ is not an interval edge.
\end{lemma}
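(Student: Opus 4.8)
The plan is to exhibit an asteroidal triple in $G - \brac{x,y}$, which by a standard characterization (Lekkerkerker–Boland) shows $G - \brac{x,y}$ is not an interval graph. An asteroidal triple is a set of three vertices such that between any two of them there is a path avoiding the closed neighborhood of the third. I would take the central section through which the path passes as the anchor for the construction.

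\medskip

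\textbf{Setting up.} Let $Q$ be the $\emph{Q}$-node on the $\tbrac{x,y}$-tree-path such that the path enters a subtree of some central section $S_a$ of $Q$ (so $1 < a < k$). Call $w$ the vertex on the tree-path that lies in $node$ of this entry, i.e.\ $node\brac{w}$ is the child of $Q$ in the subtree of $S_a$ through which the path continues toward $node\brac{y}$; note $w$ is over $y$ (it is farther down the path than $x$), and either $w = y$ or $w$ lies strictly between. Since $S_a$ is central, by Lemma~\ref{lemma:QNodeProperties}a we have $V_1 \neq \emptyset$ and $V_k \neq \emptyset$; pick $z_1 \in V_1$ and $z_k \in V_k$. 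These are ``far apart'': there is no edge between $z_1$ and $z_k$ (they are over no common ancestor other than $Q$ itself and belong to disjoint subtrees $T_1, T_k$), and neither is adjacent to anything in $V_a$ — in particular neither is adjacent to $y$, and neither is adjacent to $x$ unless $x$ happens to lie in a non-central section, which I will have to track. The intended triple is $\set{z_1, z_k, y}$.

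\medskip

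\textbf{Finding the three avoiding paths.} The path from $z_1$ to $y$ should go ``up'' through the left sections of $Q$ into $S_a$ and down the tree-path to $y$: using Lemma~\ref{lemma:QNodeProperties}c and e, the intersections $S_{i-1}\cap S_i$ are nonempty and, for central $i$, not contained in $S_1$, so I can pick a \emph{nested path} $v_1, v_2, \dots, v_{a-1}$ with $v_i \in S_i \cap S_{i+1}$ realizing a walk from $z_1$'s part of the tree across to $S_a$, then append a downward path in the subtree $T_a$ from a vertex of $S_a \cap V_a$ to $y$ along the tree-path. The symmetric nested path on the right, $v_k, v_{k-1}, \dots, v_{a+1}$, gives a $z_k$-to-$y$ path. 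The crucial point is that both of these stay \emph{inside} the sections $S_2, \dots, S_{k-1}$ and the subtree $T_a$, hence avoid $N[z_1]$ resp.\ $N[z_k]$ on the opposite side — here I would use parts e) and f) of Lemma~\ref{lemma:QNodeProperties} to guarantee the chosen $v_i$ are not in $S_1$ (resp.\ $S_k$) and so not adjacent to $z_k$ (resp.\ $z_1$). The third path, from $z_1$ to $z_k$, runs $z_1, v_1, v_2, \dots, v_{a-1}, (\text{through } S_a \text{ but not into } T_a), v_{a+1}, \dots, v_{k-1}, z_k$, and must avoid $N[y]$: since $y \in V_a \subseteq$ subtree of $S_a$, the neighbors of $y$ in $Q$ are confined to sections containing vertices of $V_a$, and by choosing the $v_i$ among the section-intersection vertices with the non-containment guarantees of Lemma~\ref{lemma:QNodeProperties}e (and handling $x$ separately — removing the edge $\brac{x,y}$ is exactly what lets us ignore $x$ as a neighbor of $y$), this path avoids $N_{G-\brac{x,y}}[y]$.

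\medskip

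\textbf{The main obstacle.} The delicate part is the bookkeeping around $x$. Depending on whether $node\brac{x}$ is $Q$ itself or lies above it, and whether $x$ occupies central or non-central sections of $Q$, the vertex $x$ may or may not be adjacent to $z_1$, to $z_k$, or to the interior vertices of the nested paths; and $x$ must not secretly provide a shortcut that ruins the ``avoiding'' property. The saving grace is precisely that we have \emph{deleted} the edge $\brac{x,y}$, so $x \notin N_{G-\brac{x,y}}[y]$, and any path using $x$ is then harmless for the $z_1$–$z_k$ leg; for the other two legs I would route through the nested-path vertices, which are section-intersection vertices strictly interior to $Q$ and hence (by Lemma~\ref{lemma:QNodeProperties}a,e,f) genuinely far from $z_k$ resp.\ $z_1$. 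I expect to need a short case split according to whether $x$ lies strictly above $Q$ or in $Q$, and in the latter case whether $S_a$ lies between $l\brac{x}$ and $r\brac{x}$ — but in every case the triple $\set{z_1, z_k, y}$ works, giving an asteroidal triple and therefore proving $G - \brac{x,y}$ is not an interval graph.
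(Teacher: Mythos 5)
The approach is right and the triple is right, but the third path is wrong in a way that cannot be patched by the bookkeeping you flag as the ``main obstacle.'' Your $z_1$–$z_k$ leg runs ``through $S_a$ but not into $T_a$,'' and you argue this avoids $N[y]$ because the neighbors of $y$ in $Q$ are ``confined to sections containing vertices of $V_a$.'' But $y$ lies in the subtree below $S_a$, so \emph{every} vertex of $S_a$ lies on the root-to-leaf path through $y$, hence every vertex of $S_a$ is adjacent to $y$. Moreover, since vertices of a $Q$-node occupy consecutive sections, any path from $z_1 \in V_1$ to $z_k \in V_k$ that stays inside $Q$'s sections and subtrees must use at least one vertex of $S_a$ to get across. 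So the route you describe necessarily enters $N[y]$; no choice of the $v_i$ can save it.

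The repair is exactly what the paper does, and it is both simpler and the real reason the hypothesis ``$1 < i < t$ in the tree-path'' matters: since $Q$ is a strictly interior node of the $\tbrac{x,y}$-tree-path, $node(x)$ is a proper ancestor of $Q$, hence $x$ is adjacent to every vertex in $Q$'s subtree — in particular to $z_1$ and $z_k$ — while $x \notin N_{G-(x,y)}[y]$ precisely because the edge $(x,y)$ was deleted. So the $z_1$–$z_k$ path is just $z_1 - x - z_k$, a path of length two above $Q$, and this is the only way to cross from $T_1$ to $T_k$ without touching $S_a$. Your ``main obstacle'' paragraph correctly notices that using $x$ is ``harmless'' for this leg, but the path you actually describe doesn't use $x$, and you never observe that the $S_a$ vertices themselves, not $x$, are what block the route. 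Your other two legs (the nested paths into $T_a$, choosing $v_i$ outside $S_k$ resp.\ $S_1$ via Lemma~\ref{lemma:QNodeProperties}e) match the paper's argument and are fine.
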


\begin{proof}
Assume that the $\tbrac{x,y}$-tree-path goes through some central section $S_i$ of a \emph{Q}-node $Q$,
and let $S_1,\ldots,S_k$ be the sections of $Q$.
Subtrees of the first and last section are nonempty, so let $z_1 \in V_1$ and $z_2 \in V_k$.
We show that vertices $y,z_1$ and $z_2$ form an asteroidal triple in the graph $G - \brac{x,y}$.

There is no edge $\brac{x,y}$, so a path $z_1 - x - z_2$ avoids the neighborhood of $y$.
To prove that there is a path between $z_1$ and $y$ that avoids the neighborhood of $z_2$,
we show that there is a nested path $\mathcal{P}_{1,k-1}$,
and so the shortest path of form $z_1 - \mathcal{P}_{1,k-1} - y$ fulfill our requirements.
Let $\mathcal{P}_{1,j}$ be a nested path with maximum $j < k$.
Clearly, such a path exists.
Otherwise, either $S_1$ is empty, or all vertices that belong to $S_1$ belong to all sections.
In both cases, properties listed in Lemma \ref{lemma:QNodeProperties} are violated.
Moreover, if $j$ is less than $k-1$,
then either $S_{j} \cap S_{j+1} = \emptyset$, or $S_{j} \cap S_{j+1} \subset S_{k}$.
Again, this contradicts Lemma \ref{lemma:QNodeProperties}, and we are done.
A path from $z_2$ to $y$ that avoids the neighborhood of $z_1$ is constructed in a similar way.
\end{proof}

\begin{figure}[h]
\begin{center}
\begin{scriptsize}
\begin{tikzpicture}

    \draw [-] (0,1) -- (2.5,1);
    \draw [-] (0,1.5) -- (2.5,1.5);

    \draw [-] (0,1) -- (0,1.5);
    \draw [-] (0.5,1) -- (0.5,1.5);
    \draw [-] (1,1) -- (1,1.5);
    \draw [-] (1.5,1) -- (1.5,1.5);
    \draw [-] (2,1) -- (2,1.5);
    \draw [-] (2.5,1) -- (2.5,1.5);

    \node at (0.25,1.25) {\small $S_{1}$};
    \node at (0.75,1.25) {\ldots};
    \node at (1.25,1.25) {\small $S_{i}$};
    \node at (1.75,1.25) {\ldots};
    \node at (2.25,1.25) {\small $S_{k}$};

    \node at (0.25,0.5) {\small $T_{1}$};
    \node at (1.25,0) {\small $y$};
    \node at (2.3,0.5) {\small $T_{k}$};

    \node at (1.7,2.5) {\small $x$};

    \draw [-] (0.25,1) -- (0.25,0.7);
    \draw [-,dotted] (1.25,1) -- (1.25,0.3);
    \draw [-,dotted] (1.25,1.5) -- (1.65,2.3);
    \draw [-] (2.25,1) -- (2.25,0.7);

\end{tikzpicture}
\end{scriptsize}
\end{center}
\caption{An $\tbrac{x,y}$-tree-path which goes through a central section $S_i$ of some \emph{Q}-node.}
\label{fig:PathThroughCentralSection}
\end{figure}
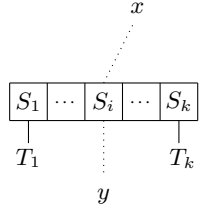

A consequence of the previous three lemmas is the following corollary.

\begin{corollary}
If an $\tbrac{x,y}$-tree-path is not almost rotable, then $\brac{x,y}$ is not an interval edge.
\end{corollary}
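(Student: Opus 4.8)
The plan is to simply negate the definition of \emph{almost rotable} and dispatch each resulting case to one of the three preceding lemmas. By definition, an $\tbrac{x,y}$-tree-path is almost rotable precisely when it does not go through the central section \emph{and} it ends in a \emph{P}-node that is a leaf. Hence, if it is not almost rotable, then by De Morgan either (i) it goes through the central section, or (ii) it does not end in a \emph{P}-node that is a leaf.

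First I would handle case (i): if the $\tbrac{x,y}$-tree-path goes through the central section, then Lemma \ref{lemma:IntervalEdge_PathCentralSection} immediately yields that $\brac{x,y}$ is not an interval edge, and there is nothing more to do.

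Next, in case (ii), I would observe that $node\brac{y}$ — the terminal node of the $\tbrac{x,y}$-tree-path — is either a \emph{Q}-node or a \emph{P}-node, since these are the only two kinds of internal nodes of an \emph{MPQ}-tree. If $node\brac{y}$ is a \emph{Q}-node, then the path ends in a \emph{Q}-node and Lemma \ref{lemma:IntervalEdge_QLeaf} applies. If $node\brac{y}$ is a \emph{P}-node, then, being in case (ii), it cannot be a leaf, so the path ends in a \emph{P}-node that is not a leaf and Lemma \ref{lemma:IntervalEdge_PNotLeaf} applies. In every case $\brac{x,y}$ fails to be an interval edge, which is exactly the assertion of the corollary.

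There is no real obstacle here: the statement is a purely combinatorial bookkeeping step that collects the three negative results into a single clause, and the only thing that needs checking is that the case split above is exhaustive — which is immediate once the negation of \emph{almost rotable} is written out, together with the fact that the last node of the tree-path is always a \emph{P}-node or a \emph{Q}-node.
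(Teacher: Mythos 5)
Your proof is correct and is exactly what the paper intends: the corollary is stated immediately after Lemmas \ref{lemma:IntervalEdge_QLeaf}, \ref{lemma:IntervalEdge_PNotLeaf}, and \ref{lemma:IntervalEdge_PathCentralSection} with the remark that it is ``a consequence of the previous three lemmas,'' and your case split on the negation of \emph{almost rotable} is the obvious way to dispatch to those three lemmas.
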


We are almost done with the classification.
In Lemma \ref{lemma:RemoveEdgePPath} we proved that if an $\tbrac{x,y}$-tree-path is rotable,
then $\brac{x,y}$ is always an interval edge.
On the other hand, in Lemmas \ref{lemma:IntervalEdge_QLeaf}, \ref{lemma:IntervalEdge_PNotLeaf} and \ref{lemma:IntervalEdge_PathCentralSection}
we considered $\tbrac{x,y}$-tree-paths that are not almost rotable,
and showed that for such paths $\brac{x,y}$ is not an interval edge.
Hence, the only remaining cases are the $\tbrac{x,y}$-tree-paths that are almost rotable, but not rotable.
In Lemmas \ref{lemma:RemoveEdgeInnerSectionNoNeighbour} and \ref{lemma:RemoveEdgeInnerSectionNeighbour}
we investigated such paths, and proved that under some additional conditions $\brac{x,y}$ is an interval edge.
Now we show that if an $\tbrac{x,y}$-tree-path is almost rotable, but is not rotable,
and conditions of Lemmas \ref{lemma:RemoveEdgeInnerSectionNoNeighbour} and \ref{lemma:RemoveEdgeInnerSectionNeighbour} are not satisfied, then $\brac{x,y}$ is not an interval edge.
This case is the hardest one, and before we prove it, we need to prove two auxiliary lemmas.

\begin{lemma}
\label{lemma:InnerQNode}
Let $S_1,\ldots,S_k$ be the sections of some \emph{Q}-node in an \emph{MPQ}-tree $\mathcal{T}$,
and for the simplicity assume that $S_0 = S_{k+1} = \emptyset$.
For every pair of indices $\brac{a,b} \neq \brac{1,k}$, $a < b$
there is a vertex $v \in \brac{ \brac{S_{a-1} \cap S_{a}} \setminus S_{b} } \cup \brac{ \brac{S_{b} \cap S_{b+1}} \setminus S_{a} }$.
\end{lemma}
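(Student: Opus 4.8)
The statement is equivalent to the assertion that the sets $(S_{a-1}\cap S_a)\setminus S_b$ and $(S_b\cap S_{b+1})\setminus S_a$ cannot both be empty; that is, that $S_{a-1}\cap S_a\subseteq S_b$ and $S_b\cap S_{b+1}\subseteq S_a$ cannot hold simultaneously. The plan is to settle the two boundary cases directly from Lemma~\ref{lemma:QNodeProperties} and then treat the interior case by contradiction.

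If $a=1$, then $S_{a-1}=S_0=\emptyset$, so the first set is empty and it suffices to show $(S_b\cap S_{b+1})\setminus S_1\neq\emptyset$; since $(1,b)\neq(1,k)$ and $1=a<b$ we have $2\le b\le k-1$, and this is exactly the first half of property~(e) of Lemma~\ref{lemma:QNodeProperties}. Symmetrically, if $b=k$ then $S_{b+1}=S_{k+1}=\emptyset$, and the second half of property~(e), applied with the index $a$ (which then satisfies $2\le a\le k-1$), gives $(S_{a-1}\cap S_a)\setminus S_k\neq\emptyset$.

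Now assume $1<a<b<k$ and suppose for contradiction that both containments hold. I would first invoke the standard feature of \emph{MPQ}-trees that a vertex placed in a section of a $Q$-node spans at least two consecutive sections (a vertex whose cliques all lie inside a single child of the $Q$-node is stored in that child's subtree, not in a section), i.e.\ $l(v)<r(v)$ for every such $v$. From $S_{a-1}\cap S_a\subseteq S_b$ and the fact that each vertex occupies consecutive sections, every vertex of $S_{a-1}\cap S_a$ actually lies in all of $S_{a-1},\dots,S_b$; in particular $S_{a-1}\cap S_a\subseteq S_{a+1}$, and combining this with $l(v)<r(v)$ upgrades it to $S_a\subseteq S_{a+1}$. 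By property~(d) of Lemma~\ref{lemma:QNodeProperties} this inclusion is proper, so there is a vertex $p\in S_{a+1}\setminus S_a$: its left endpoint is the section $a+1$ and, spanning two sections, $p\in S_{a+2}$ as well. A symmetric argument starting from $S_b\cap S_{b+1}\subseteq S_a$ yields $S_b\subsetneq S_{b-1}$ together with a vertex $q$ whose right endpoint is the section $b-1$ and which lies in $S_{b-2}$.

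From here the contradiction is obtained by chasing $p$ (equivalently $q$) toward the far end of the block $S_a,\dots,S_b$. When $b=a+1$ it is immediate: then $p\in S_{a+1}\cap S_{a+2}=S_b\cap S_{b+1}$, yet the left endpoint of $p$ lies strictly to the right of section $a$, contradicting $S_b\cap S_{b+1}\subseteq S_a$. For $b>a+1$ one repeats the step section by section, at each stage selecting the vertex with the rightmost right endpoint among those whose left endpoint is the current section and using properties~(b), (d) and~(e) of Lemma~\ref{lemma:QNodeProperties} to certify that it advances, until a vertex of $S_b\cap S_{b+1}$ with left endpoint past section $a$ appears (or, symmetrically, a vertex of $S_{a-1}\cap S_a$ missing $S_b$). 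I expect this inward chase to be the main obstacle: because a vertex may occupy just two interior sections, the chain $S_a\subseteq S_{a+1}\subseteq\cdots$ need not continue, so the argument has to be organized around a carefully chosen extremal witness at each step and kept entirely in terms of the sections — the kind of bookkeeping that is easy to get subtly wrong.
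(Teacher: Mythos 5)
Your treatment of the boundary cases ($a=1$ or $b=k$) via Lemma~\ref{lemma:QNodeProperties}(e) is correct, and your observation that for $1<a<b<k$ the assumption forces $S_a\subsetneq S_{a+1}$ and $S_b\subsetneq S_{b-1}$ is also sound. But the ``inward chase'' you sketch for $b>a+1$ is not merely fiddly bookkeeping --- it cannot be made to work. Any such chase would be deriving the lemma from properties (a)--(f) of Lemma~\ref{lemma:QNodeProperties} alone (plus the fact that every vertex of a $Q$-node spans at least two sections), and these local properties are genuinely insufficient. Concretely, take $k=5$ with section vertices $u$ spanning $S_1,\ldots,S_4$, $v$ spanning $S_2,S_3$, $w$ spanning $S_3,S_4$, $z$ spanning $S_2,\ldots,S_5$, so $S_1=\{u\}$, $S_2=\{u,v,z\}$, $S_3=\{u,v,w,z\}$, $S_4=\{u,w,z\}$, $S_5=\{z\}$, and put nonempty subtrees at $V_1,V_2,V_4,V_5$. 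One checks that (a)--(f) all hold, yet for $(a,b)=(2,4)$ both $(S_1\cap S_2)\setminus S_4$ and $(S_4\cap S_5)\setminus S_2$ are empty. In this configuration your chase from $p=w$ stalls immediately: $w$ ends in $S_4$, and the only vertex crossing the $S_4/S_5$ boundary is $z$, which starts back at $S_2\le a$.

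What rules this configuration out --- and what the paper actually uses --- is not a local property but the global defining property of \emph{PQ/MPQ}-trees: a $Q$-node must encode \emph{exactly} the valid consecutive orderings of the maximal cliques, allowing only full reversal. The paper argues by contradiction that if both sets were empty, then every vertex meeting the block $S_a,\ldots,S_b$ either lies entirely inside it or spans all of it; reversing just that block therefore yields another valid consecutive arrangement of the maximal cliques, which (since $V_a,V_b,V_k$ are nonempty) is not among the two orderings the $Q$-node permits --- contradicting the completeness of the tree. This is a structurally different argument from yours: it does not push a witness vertex across sections, but instead produces a forbidden extra symmetry of the representation. To repair your proof you would have to import this completeness property; the section-inclusion facts (a)--(f) by themselves do not imply the lemma.
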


\begin{proof}
By contradiction, assume that for some pair of indices $\brac{a,b}$ there is no such vertex,
so each vertex $u \in S_{a} \cup \ldots \cup S_{b}$
is either contained in $S_a,\ldots,S_b$, or belongs to $S_a \cap \ldots \cap S_b$.
Without loss of generality assume that $b < k$.
Each vertex belongs to at least two sections.
Hence, no vertex has its right endpoint in $S_{a}$ or left endpoint in $S_{b}$,
and Lemma \ref{lemma:QNodeProperties} implies that $V_a$, $V_b$ and $V_k$ are not empty.
Let $v_a \in V_a$, $v_b \in V_b$, $v_k \in V_k$,
and consider any maximal cliques $C_a,C_b$ and $C_k$ such that $v_a \in C_a$, $v_b \in C_b$ and $v_k \in C_k$.
Note that, $\mathcal{T}$ encodes only those orderings of maximal cliques in which either $C_a < C_b < C_k$ or $C_k < C_b < C_a$.
Now, consider a modified tree $\mathcal{T}'$ in which we reverse the order of sections $S_a,\ldots,S_b$.
Clearly, because of our assumptions, $\mathcal{T}'$ represents the same interval graph as $\mathcal{T}$,
but $\mathcal{T}'$ encodes an ordering $C_b < C_a < C_k$, which is not encoded by $\mathcal{T}$.
Thus, $\mathcal{T}$ is not a valid \emph{MPQ}-tree, and we are done.
\end{proof}

\begin{lemma}
\label{lemma:InnerInterval}
If an $\tbrac{x,y}$-tree-path starts in a central section $S_a$,
and there is a vertex $q \in S_a \setminus \brac{S_{l\brac{x}} \cup S_{r\brac{x}}}$,
then $\brac{x,y}$ is not an interval edge.
\end{lemma}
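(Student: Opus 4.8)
The plan is to produce an induced $C_4$ in $G - \brac{x,y}$ using Observation~\ref{obs:InducedC4}, so it suffices to exhibit two vertices $z_1,z_2$, both with no edge between them, such that both $x$ and $y$ are over $z_1$ and over $z_2$. The vertex $y$ sits in a subtree $T_a$ of the $x$-central section $S_a$, so $x$ is over $y$, and any vertex reachable below $node\brac{y}$ (or $y$ itself, appropriately) will be below both $x$ and $y$. The role of the hypothesis vertex $q \in S_a \setminus \brac{S_{l\brac{x}} \cup S_{r\brac{x}}}$ is that $q$ witnesses that $S_a$ is strictly ``wider'' than what $x$ alone forces: since $q$ has its left endpoint strictly after $S_{l\brac{x}}$ and its right endpoint strictly before $S_{r\brac{x}}$, the section $S_a$ lies strictly inside the interval of sections $S_{l\brac{x}},\ldots,S_{r\brac{x}}$ that $x$ spans, and $q$ is a vertex contained in $S_a$ but not extending to either end of $x$'s range.

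First I would handle the position of $y$. Since the $\tbrac{x,y}$-tree-path starts in the central section $S_a$, we have $node\brac{y}$ lying in the subtree $T_a$. If that subtree contains at least two vertices with no edge between them — equivalently, $T_a$ does not represent a clique — then by Lemma~\ref{lemma:QNodeProperties} and the arguments already used in Lemma~\ref{lemma:RemovingEdgeQFail} we can pick $z_1,z_2 \in V_a$ with $\brac{z_1,z_2}\notin E$, both under $x$ and $y$, and we are done via Observation~\ref{obs:InducedC4}. So the substantive case is when $T_a$ is a clique; then $y$ is adjacent to every other vertex in $V_a$, and I must instead find the two non-adjacent vertices partly outside $V_a$.

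The key step is to use Lemma~\ref{lemma:InnerQNode} with a carefully chosen pair of indices. Because $q \in S_a$ but $q \notin S_{l\brac{x}}$ and $q \notin S_{r\brac{x}}$, we have $l\brac{x} < l\brac{q} \le a \le r\brac{q} < r\brac{x}$, so the pair $\brac{l\brac{q}, r\brac{q}}$ is different from $\brac{1,k}$ (indeed it lies strictly inside $1,\ldots,k$ since $l\brac{x}\ge 1$ and $r\brac{x}\le k$). Apply Lemma~\ref{lemma:InnerQNode} to this pair — actually I would apply it to a pair of indices bracketing $a$ from both sides, chosen so that the witness vertex is forced to have an endpoint between $l\brac{x}$ and $a$ on one side and between $a$ and $r\brac{x}$ on the other; this may require invoking the lemma twice or choosing the pair asymmetrically. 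In either case Lemma~\ref{lemma:InnerQNode} hands us a vertex $z_1$ with $z_1 \in \brac{S_{c-1}\cap S_c}\setminus S_a$ for an appropriate index $c$ to the left of $a$, hence $z_1$ is not adjacent to $y$ (since $z_1 \notin S_a \supseteq$ the sections $y$ lives under, and $T_a$ being below $S_a$ means $z_1$ cannot reach $y$ either), yet $z_1$ still lies in a section strictly between $S_{l\brac{x}}$ and $S_{r\brac{x}}$ so $x$ is over $z_1$; symmetrically a vertex $z_2$ to the right of $a$. Then $\brac{z_1,z_2}\notin E$ because their section-ranges are separated by $S_a$ and neither reaches into $S_a$, and Observation~\ref{obs:InducedC4} applied to $x,y,z_1,z_2$ finishes the proof.

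The main obstacle I anticipate is the bookkeeping of exactly which index pair to feed into Lemma~\ref{lemma:InnerQNode} so that the returned vertex is simultaneously (i) non-adjacent to $y$, i.e.\ genuinely outside the column of sections containing $node\brac{y}$, and (ii) still under $x$, i.e.\ inside $S_{l\brac{x}},\ldots,S_{r\brac{x}}$; the existence of $q$ is precisely what makes room for such an index to exist, but one has to check that $q$'s endpoints are strict enough and possibly split into sub-cases according to whether $l\brac{q}$ or $r\brac{q}$ equals $a$. A secondary subtlety is verifying $\brac{z_1,z_2}\notin E\brac{G-\brac{x,y}}$, which reduces to $\brac{z_1,z_2}\notin E\brac{G}$ since $\set{z_1,z_2}\neq\set{x,y}$; this holds because their section ranges are disjoint (one ends strictly before $S_a$, the other begins strictly after $S_a$) and they lie in the same $Q$-node, so no common clique contains both.
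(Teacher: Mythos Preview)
Your approach has a fundamental gap: you are misapplying Observation~\ref{obs:InducedC4}. That observation requires that both $x$ \emph{and} $y$ are over $z_1$ and $z_2$; in particular this forces the edges $\brac{y,z_1},\brac{y,z_2}\in E\brac{G}$. But the vertices you construct in the main case are explicitly chosen with $z_1,z_2\notin S_a$, hence not adjacent to $y$ (as you yourself note). You cannot then invoke the observation on $x,y,z_1,z_2$.

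More seriously, the $C_4$ route via Observation~\ref{obs:InducedC4} is simply unavailable here. In the relevant situation $node\brac{y}$ is a leaf \emph{P}-node, so ``$y$ is over $z$'' forces $node\brac{z}=node\brac{y}$; any two such vertices lie in the same leaf and are therefore adjacent. Hence there is no pair $z_1,z_2$ that is simultaneously under $y$ and non-adjacent, and the observation cannot be triggered. (Your preliminary case ``$T_a$ not a clique, pick $z_1,z_2\in V_a$'' has the same defect: vertices in $V_a$ need not be under $y$.)

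The paper's proof avoids this obstruction by producing an \emph{asteroidal triple} $\set{v_L,v_R,y}$ in $G-\brac{x,y}$ rather than a $C_4$. The path $v_L-x-v_R$ avoids $N\brac{y}$ precisely because the edge $\brac{x,y}$ has been removed, while the paths from $v_L$ and $v_R$ back to $y$ are built using the vertex $q$ together with \emph{nested paths} $\mathcal{P}_{l_1,r_1}$ and $\mathcal{P}_{l_2,r_2}$; Lemma~\ref{lemma:InnerQNode} is used to handle the degenerate subcases where one or both nested paths fail to exist. You correctly identified Lemma~\ref{lemma:InnerQNode} as relevant, but it feeds an asteroidal-triple argument, not an induced-$C_4$ one.
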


\begin{proof}
By contradiction, assume that $\brac{x,y}$ is an interval edge,
Let $P_{l_1,r_1}$ for $r_1 < a$ be a nested path that intersects $q$ and have the smallest $l_1$.
Analogously, let $P_{l_2,r_2}$ for $a < l_2$ be a nested path that intersects $q$ and have the biggest $r_2$.
Notice that these paths may not exist.
For instance, if $q$ has its right endpoint in $S_a$, then $P_{l_2,r_2}$ does not exist.

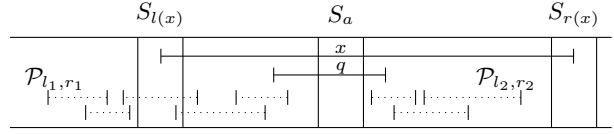
\begin{figure}[h]
\begin{center}
\begin{scriptsize}
\begin{tikzpicture}


	\draw [-] (0,2.2) -- (8,2.2);
	\draw [-] (0,1) -- (8,1);

	\draw [-] (4.1,1) -- (4.1,2.2);
	\draw [-] (4.7,1) -- (4.7,2.2);
    \node at (4.4,2.5) {\small $S_a$};

	\draw [-] (1.7,1) -- (1.7,2.2);
	\draw [-] (2.3,1) -- (2.3,2.2);
	\node at (2,2.5) {\small $S_{l\brac{x}}$};

	\draw [-] (7.2,1) -- (7.2,2.2);
	\draw [-] (7.8,1) -- (7.8,2.2);
	\node at (7.5,2.5) {\small $S_{r\brac{x}}$};

	\draw [|-|] (2,1.95) -- (7.5,1.95);
	\node at (4.4, 2.05) {$x$};

	\draw [|-|] (3.5,1.7) -- (5,1.7);
	\node at (4.4, 1.8) {$q$};

    \node at (6.6,1.65) {\small $\mathcal{P}_{l_2,r_2}$};
	\draw [|-|,dotted] (4.8,1.4) -- (5.4,1.4);
	\draw [|-|,dotted] (5.1,1.2) -- (6.1,1.2);
	\draw [|-|,dotted] (5.5,1.4) -- (6.8,1.4);

    \node at (0.6,1.65) {\small $\mathcal{P}_{l_1,r_1}$};
	\draw [|-|,dotted] (3.7,1.4) -- (3,1.4);
	\draw [|-|,dotted] (3.4,1.2) -- (2.2,1.2);
	\draw [|-|,dotted] (2.5,1.4) -- (1.5,1.4);
	\draw [|-|,dotted] (1.6,1.2) -- (1,1.2);
	\draw [|-|,dotted] (1.3,1.4) -- (0.5,1.4);

\end{tikzpicture}
\end{scriptsize}
\end{center}
\caption{Paths $\mathcal{P}_{l_1,r_1}$ and $\mathcal{P}_{l_2,r_2}$.}
\label{fig:InnerPaths}
\end{figure}

At first, we consider the case where both paths do not exist, and prove that there is an asteroidal triple in the graph $G - \brac{x,y}$.
By Lemma \ref{lemma:InnerQNode}, without loss of generality, there is a vertex
$v \in \brac{S_{r\brac{q}} \cap S_{r\brac{q}+1}} \setminus S_{l\brac{q}}$.
Both paths do not exist, so $v$ has to belong to $S_a$.
Moreover, Lemma \ref{lemma:QNodeProperties}f gives two vertices $v_L \in \brac{S_{l\brac{q}} \cup V_{l\brac{q}}} \setminus S_{l\brac{q}+1}$,
and $v_R \in \brac{S_{r\brac{q}+1} \cup V_{r\brac{q}+1}} \setminus S_{r\brac{q}}$.
Thus, the paths:
$v_L - x - v_R$,
$v_L - q - y$, and
$v_R - v - y$
certify the asteroidal triple $\set{v_L,v_R,y}$, and we are done in this case.

Now, assume that both paths exist, and put $L = \max\set{l_1,l\brac{x}}$ and $R = \min\set{r_2,r\brac{x}}$.
Lemma \ref{lemma:QNodeProperties}f implies that there is a vertex $v_L$ which has its right endpoint in $S_L$ or $v_L \in V_L$.
Analogously, define a vertex $v_R$ for the section $S_R$.
Clearly, both $v_L$ and $v_R$ are neighbors of $x$, but they do not belong to the neighborhood of $q$.
Hence, $\set{v_L,v_R,y}$ is an asteroidal triple in the graph $G - \brac{x,y}$.
To see this, consider the paths:
$v_L - x - v_R$,
$v_L - P_{l_1,r_1} - q - y$, and
$v_R - P_{l_2,r_2} - q - y$.
Thus, if both paths exist, then $\brac{x,y}$ is not an interval edge.

Without loss of generality, assume that $\mathcal{P}_{l_1,r_1}$ exists, but $\mathcal{P}_{l_2,r_2}$ does not.
Lemma \ref{lemma:InnerQNode} implies that there is a vertex $v$ such that
$v \in \brac{S_{l_1-1} \cap S_{l_1}} \setminus S_{r\brac{q}}$, or
$v \in \brac{S_{r\brac{q}} \cap S_{r\brac{q}+1}} \setminus S_{l_1}$.
Note that, a this point $v$ and $x$ might be the same vertex.
We assumed that right path does not exist, so $S_{r\brac{q}} \cap S_{r\brac{q}+1} \subset S_{a}$.
Moreover, $l_1$ is the smallest index such that there is a nested path $P_{l_1,r_1}$ for $r_1<a$ that intersects $q$.
Hence, $S_{l_1-1} \cap S_{l_1} \subset S_{a}$, and so in both cases $v$ belongs to $S_a$.
Lemma \ref{lemma:QNodeProperties}f gives three vertices:
$v_L \in \brac{S_{l_1} \cup V_{l_1}} \setminus S_{l_1-1}$,
$v_R \in \brac{S_{r\brac{q}} \cup V_{r\brac{q}}} \setminus S_{r\brac{q} - 1}$, and
$v_{R+1} \in \brac{S_{r\brac{q}+1} \cup V_{r\brac{q}+1}} \setminus S_{r\brac{q}}$.
Note that, both $v_R$ and $v_{R+1}$ are neighbors of $x$, and both $v_L$ and $v_{R+1}$ are not neighbors of $q$.

If $l_1 < l\brac{x}$,
then the paths:
$v_L - P_{l_1,r_1} - x - v_{R+1}$,
$v_L - P_{l_1,r_1} - q - y $, and
$v_{R+1} - x - q - y$ certify the asteroidal triple $\set{v_L,v_{R+1},y}$.
Thus, we conclude that $l\brac{x} \leq l_1 < l\brac{q}$,
which means that the whole path $P_{l_1,r_1}$ is contained in $x$.
Moreover, $v$ and $x$ are different vertices, and $v_L$ is a neighbor of $x$.
If $v \in \brac{S_{l_1-1} \cap S_{l_1}} \setminus S_{r\brac{q}}$,
then there is an asteroidal triple $\set{v_L,v_R,y}$ certified by paths: $v_L - x - v_R$, $v_L - v - y$ and $v_R - q - y$.
On the other hand, if $v \in \brac{S_{r\brac{q}} \cap S_{r\brac{q}+1}} \setminus S_{l_1}$,
then $\set{v_L,v_{R+1},y}$ is an asteroidal triple certified by paths:
$v_L - x - v_{R+1}$,
$v_L - P_{l_1,r_1} - q - y$, and
$v_{R+1} - v - y$.
Thus, in this case the edge $\brac{x,y}$ is also not an interval edge, and we are done.
\end{proof}


Finally, we are ready to prove the last two negative results on interval edges.

\begin{lemma}
\label{lemma:RemoveEdgeImpossibleYHasEdge}
If an $\tbrac{x,y}$-tree-path
starts in a central section $S_a$,
$y$ has a neighbor in subtree $T_a$,
and:
\begin{enumerate}
    \item $i > 1 \implies \brac{ \exists_{z_i \neq x}: z_i \in S_{a} \setminus S_{i} \text{ or } \exists_{\bar{z_i}}: \bar{z_i} \in \brac{S_{i-1} \cap S_{i}} \setminus S_{a} }$, and
    \item $j < k \implies \brac{ \exists_{z_j \neq x}: z_j \in S_{a} \setminus S_{j} \text{ or } \exists_{\bar{z_j}}: \bar{z_j} \in \brac{S_{j} \cap S_{j+1}} \setminus S_{a} }$, and
    \item $i = 1 \implies \exists_{z_1 \neq x}: z_1 \in S_{a} \setminus S_1$, and
    \item $j = k \implies \exists_{z_k \neq x}: z_k \in S_{a} \setminus S_k$,
\end{enumerate}
where $i = l\brac{x}$ and $j = r\brac{x}$,
then $\brac{x,y}$ is not an interval edge.
\end{lemma}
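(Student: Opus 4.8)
The plan is to reduce to a normal form and then exhibit an asteroidal triple in $G - \brac{x,y}$. First, if the $\tbrac{x,y}$-tree-path is not almost rotable then Lemmas \ref{lemma:IntervalEdge_QLeaf}, \ref{lemma:IntervalEdge_PNotLeaf} and \ref{lemma:IntervalEdge_PathCentralSection} already give the claim, so we may assume it is almost rotable, and in particular that $node\brac{y}$ is a leaf \emph{P}-node. Writing $i = l\brac{x}$ and $j = r\brac{x}$ (so $1 \leq i < a < j \leq k$), Lemma \ref{lemma:InnerInterval} lets us assume in addition that $S_a \setminus \set{x} \subseteq S_i \cup S_j$, since otherwise $\brac{x,y}$ is not an interval edge. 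It then suffices to exhibit an asteroidal triple in $G - \brac{x,y}$, and the one we construct will have the form $\set{y, \alpha, \beta}$.

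The key structural facts are that the neighbours of $y$ inside the \emph{Q}-node $Q$ are exactly the vertices of $S_a$, because $node\brac{y}$ lies strictly below the section $S_a$; that after deleting $\brac{x,y}$ the vertex $x$ is no longer a neighbour of $y$ but remains adjacent to every vertex of $S_i \cup \dots \cup S_j$ and of $V_i \cup \dots \cup V_j$; and that $x \in S_{a-1} \cap S_a \cap S_{a+1}$. The vertices $\alpha$ and $\beta$ of the triple will be non-neighbours of $y$ lying, roughly, to the left and to the right of $x$ along the \emph{Q}-node --- for instance vertices of $V_1$ and $V_k$, or vertices selected via Lemma \ref{lemma:QNodeProperties}f. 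A path from $\alpha$ to $\beta$ avoiding $N\sbrac{y}$ is then obtained as a nested path running through the sections of $Q$ and forced through $x$ at section $S_a$, so that it meets $S_a$ only at $x$. The heart of the matter is to build a path from $\alpha$ to $y$ avoiding $N\sbrac{\beta}$ and, symmetrically, a path from $\beta$ to $y$ avoiding $N\sbrac{\alpha}$.

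This is where the four hypotheses enter: they are precisely the negations of the ``relocate $S_a$'' conditions of Lemma \ref{lemma:RemoveEdgeInnerSectionNeighbour}, and hence supply, on each side of $S_a$, a witness vertex --- either some $z \neq x$ in $S_a$ that lies outside the corresponding extreme section of $x$ (and so, by $S_a \setminus \set{x} \subseteq S_i \cup S_j$, is a neighbour of $y$ with an endpoint strictly inside the span of $x$), or some $\bar z$ in $\brac{S_{i-1}\cap S_i}\setminus S_a$ (respectively $\brac{S_j \cap S_{j+1}}\setminus S_a$), a neighbour of $x$ that is not a neighbour of $y$. Using these witnesses, together with Lemma \ref{lemma:QNodeProperties}f to find vertices with an endpoint in a prescribed section and Lemma \ref{lemma:InnerQNode} to choose the successive vertices of a nested path so as to stay out of $S_k$ (respectively $S_1$), one routes a path from $\alpha$ that follows a nested path to a neighbour of $y$ in $S_a$ and then steps to $y$, staying entirely out of $S_k$ and hence out of $N\sbrac{\beta}$; the mirror construction gives the path from $\beta$ to $y$ avoiding $N\sbrac{\alpha}$. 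In the degenerate case $S_a = \set{x}$ there is no neighbour of $y$ in $S_a \setminus \set{x}$ to step through, and here the hypothesis that $y$ has a neighbour in $T_a$ is used: the paths to $y$ instead enter $T_a$ through $x$ and finish $x - w - y$ with $w \in V_a$.

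The main obstacle is the bookkeeping of this case analysis. One must check, in each combination of the left and right witnesses and in each of the corner cases $i = 1$, $j = k$, $S_a = \set{x}$, and ``$S_a$ meets only one of $S_i$, $S_j$'', that the chosen nested paths never cross $S_a$ on the wrong side, never run into $S_1$ or $S_k$ when they must avoid $N\sbrac{\alpha}$ or $N\sbrac{\beta}$, and never reuse a vertex some other path forbids --- and that the choices of the witnesses, of $\alpha$ and $\beta$, and of the nested-path vertices can all be made simultaneously. This relies throughout on the reduction $S_a \setminus \set{x} \subseteq S_i \cup S_j$ and on Lemmas \ref{lemma:QNodeProperties} and \ref{lemma:InnerQNode}, and handling the corner cases cleanly is the delicate part.
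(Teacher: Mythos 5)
The high-level strategy you describe---reduce via Lemma~\ref{lemma:InnerInterval} to $S_a \setminus \set{x} \subseteq S_{l(x)} \cup S_{r(x)}$, then use the hypothesis witnesses to assemble an asteroidal triple of the form $\set{y,\alpha,\beta}$---is indeed the strategy the paper uses. But the proposal never actually produces the triples, and the part you defer to ``bookkeeping'' is precisely the content of the lemma. The paper's proof splits into three concrete cases according to which kind of witness exists on each side (both $z$-type; both $\bar z$-type; one of each), and in each case names the two non-$y$ members of the triple via Lemma~\ref{lemma:QNodeProperties}f and writes out three specific paths. Without that, there is nothing to check. So this is a genuine gap, not just unfinished polish.

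There is also a concrete misconception in the sketch. You claim the hypothesis that $y$ has a neighbor $z$ in $T_a$ is only needed in the ``degenerate case $S_a = \set{x}$'' and that otherwise you can ``step through'' a neighbor of $y$ in $S_a\setminus\set{x}$. In fact the paper routes through $z$ in $T_a$ whenever a $\bar z$-type witness is used (two of the three cases), even when $S_a \supsetneq \set{x}$: if the witness on one side is $\bar z_j \in (S_j\cap S_{j+1})\setminus S_a$, it is not adjacent to $y$ at all, so the path to $y$ must pass through $x$ and then into $T_a$ via $z$, giving segments like $v_R - \bar z_j - x - z - y$. Stepping through a $z$-type vertex in $S_a$ instead would be unsafe there, because such a vertex can extend as far right as $S_k$ and so may hit $N[\beta]$; the paper's use of $z$ sidesteps exactly this. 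Relatedly, your plan of taking $\alpha,\beta$ from $V_1,V_k$ and arguing avoidance ``by staying out of $S_k$'' is too coarse: the paper chooses $v_L,v_R$ tightly near $S_{i}$/$S_{i-1}$ and $S_{j}$/$S_{j+1}$ precisely so that adjacency to $x$, to the $\bar z$'s, and to $z$ can all be controlled. To complete the proof you would need to commit to these choices and verify the three paths case by case, which is what the paper does.
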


\begin{proof}
By contradiction, assume that $\brac{x,y}$ is an interval edge, and let $z$ be the neighbor of $y$ in $T_a$.
If both vertices $z_i$ and $z_j$ exist, and $z_i = z_j$,
then $z_i \in S_{a} \setminus \brac{S_{i} \cup S_{j}}$, and Lemma \ref{lemma:InnerInterval} leads to a contradiction.
Hence, if both of them exist, then $z_i \neq z_j$.
But, in this case we can find an asteroidal triple $\set{v_L,v_R,y}$, where $v_L \in \brac{S_{i} \cup V_{i}} \setminus S_{i+1}$ and $v_R \in \brac{S_{j} \cup V_{j}} \setminus S_{j-1}$,
certified by paths:
$v_L - x - v_R$,
$v_L - z_j - y$, and
$v_R - z_i - y$.

Thus, without loss of generality $z_j$ does not exist, and there is a vertex $\bar{z_j}$ which belongs to $\brac{S_{j} \cap S_{j+1}} \setminus S_{a}$.
If $\bar{z_i}$ also exists,
then consider vertices
$v_L \in \brac{S_{i-1} \cup V_{i-1}} \setminus S_{i}$,
and $v_R \in \brac{S_{j+1} \cup V_{j+1}} \setminus S_{j}$ that are given by Lemma \ref{lemma:QNodeProperties}f,
and notice that paths:
$v_L - \bar{z_i} - x - \bar{z_j} - v_R$,
$v_L - \bar{z_i} - x - z - y$, and
$v_R - \bar{z_j} - x - z - y$
certify the asteroidal triple $\set{v_L,v_R,y}$.

Hence, $\bar{z_i}$ does not exist, and the only remaining case is where $\bar{z_j}$ and $z_i$ exist.
Note that if $z_i$ does not belong to $S_j$, then $z_i \in S_{a} \setminus \brac{S_i \cup S_j}$ and Lemma \ref{lemma:InnerInterval} gives a contradiction.
Hence, $z_i \in S_{j}$ and there is an edge between $z_i$ and $\bar{z_j}$.
Lemma \ref{lemma:QNodeProperties}f gives vertices $v_L \in \brac{S_{i} \cup V_{i}} \setminus S_{i+1}$ and $v_R \in \brac{S_{j+1} \cup V_{j+1}} \setminus S_{j}$,
and the asteroidal triple $\set{v_L,v_R,y}$ is certified by paths:
$v_L - x - \bar{z_j} - v_R$,
$v_L - x - z - y$, and
$v_R - \bar{z_j} - z_i - y$.
\end{proof}


\begin{lemma}
\label{lemma:RemoveEdgeImpossibleYHasNoEdge}
If an $\tbrac{x,y}$-tree-path
starts in a central section $S_a$,
$y$ does not have a neighbor in subtree $T_a$,
and:
\begin{enumerate}
    \item $\forall_{1 < b \leq l\brac{x}}: \exists_{z_b \neq x}: z_b \in S_{a} \setminus S_{b}$ or $\exists_{\bar{z_b}}: \bar{z_b} \in \brac{S_{b-1} \cap S_{b}} \setminus S_{a}$, and
    \item $\forall_{r\brac{x} \leq b < k}: \exists_{z_b \neq x}: z_b \in S_{a} \setminus S_{b}$ or $\exists_{\bar{z_b}}: \bar{z_b} \in \brac{S_{b} \cap S_{b+1}} \setminus S_{a}$, and
    \item $\exists_{z_1 \neq x}: z_1 \in S_{a} \setminus S_1$, and
    \item $\exists_{z_k \neq x}: z_k \in S_{a} \setminus S_k$,
\end{enumerate}
then $\brac{x,y}$ is not an interval edge.
\end{lemma}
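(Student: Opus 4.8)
The plan is to argue by contradiction: suppose $\brac{x,y}$ is an interval edge, so that $G' := G - \brac{x,y}$ is an interval graph and hence contains neither an induced $C_4$ nor an asteroidal triple. Write $i = l\brac{x}$ and $j = r\brac{x}$; since the $\tbrac{x,y}$-tree-path starts in the central section $S_a$ we have $1 \le i < a < j \le k$. The decisive structural observation is that, because $y$ has no neighbor in $T_a$, every neighbor of $y$ in $G$ lies either in $S_a$ or strictly above the $Q$-node; consequently, in $G'$ any path that reaches $y$ through a vertex of the $Q$-node or of $T_a$ must enter $y$ via the single edge joining it to some vertex of $S_a \setminus \set{x}$. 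I would first invoke Lemma \ref{lemma:InnerInterval}: if some vertex of $S_a \setminus \set{x}$ fails to belong to $S_i \cup S_j$ we are done immediately, so from now on assume $S_a \setminus \set{x} \subseteq S_i \cup S_j$. In particular every ``witness'' vertex of $S_a$ produced below automatically lands in $S_i$ or in $S_j$.

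Next I would unpack the hypotheses. Conditions (1)--(4) are precisely the negations, section by section, of the alternatives in Lemma \ref{lemma:RemoveEdgeInnerSectionNoNeighbour}: for every $1 < b \le i$ they hand us either a vertex $z_b \ne x$ with $z_b \in S_a \setminus S_b$ (which by the previous paragraph must lie in $S_i$, forcing $i > 1$), or a vertex $\bar z_b \in \brac{S_{b-1} \cap S_b} \setminus S_a$; symmetrically on the right for every $j \le b < k$; and unconditionally a vertex $z_1 \in S_a \setminus S_1$ and a vertex $z_k \in S_a \setminus S_k$, forced into $S_i$ respectively $S_j$. The case split is then driven by whether, on the left, the ``$\bar z_b$'' alternative occurs for some $1 < b \le i$ (a \emph{blocker lying outside} $x$) or all left alternatives are of the ``$z_b \in S_i \cap S_a$'' type, and analogously on the right. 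This gives four principal cases, together with the corner cases $i = 1$ and $j = k$, in which the relevant endpoints have to be taken inside $V_1$ or $V_k$ rather than in a section.

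In each case the aim is to exhibit an asteroidal triple $\set{v_L, v_R, y}$ in $G'$ (or, in a few degenerate sub-cases, an induced $C_4$ via Observation \ref{obs:InducedC4}). The endpoints $v_L$ and $v_R$ are produced by Lemma \ref{lemma:QNodeProperties}f applied at suitable sections to the left and to the right of $S_a$; they are neighbors of $x$ but, lying outside $S_a$ and outside $T_a$, not neighbors of $y$, so the backbone path $v_L - x - v_R$ avoids the neighborhood of $y$ (this is where the deletion of $\brac{x,y}$ is used). Each of the two legs from $v_L$ to $y$ and from $v_R$ to $y$ is assembled from a nested path in the sense of the paragraph preceding Lemma \ref{lemma:IntervalEdge_PathCentralSection}, which carries $v_L$ (respectively $v_R$) across consecutive sections up to $S_a$, followed by the single edge from a witness in $S_a$ to $y$; when the side in question carries a blocker $\bar z_b$, that leg instead detours through $x$, exactly as in the chains $v_L - \bar z_i - x - \bar z_j - v_R$ appearing in Lemma \ref{lemma:RemoveEdgeImpossibleYHasEdge}. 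Throughout, whenever a vertex of $S_a$ outside $S_i \cup S_j$ resurfaces, Lemma \ref{lemma:InnerInterval} closes that branch, and whenever two sections threaten to be mergeable, Lemma \ref{lemma:InnerQNode} supplies the separating vertex needed to keep the $Q$-node valid.

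I expect the main obstacle to be the bookkeeping in the case analysis rather than any single hard idea. The pair $v_L, v_R$ must simultaneously (a) be adjacent to $x$, (b) be non-adjacent to $y$, (c) avoid the interior of the opposite leg, and (d) be joined to $S_a$ by a nested path, and making all four hold forces a careful choice of which clause of Lemma \ref{lemma:QNodeProperties} to invoke and at which section. The corner cases $i = 1$ and $j = k$, and the sub-cases in which a blocker sits far to the left of $S_i$ (so that the detour-through-$x$ leg still has to be checked to avoid the neighborhood of $v_R$), are precisely where the full ``for all $b$'' strength of the hypotheses --- as opposed to the two-section hypotheses of Lemma \ref{lemma:RemoveEdgeImpossibleYHasEdge} --- is genuinely needed, and they will have to be written out one by one.
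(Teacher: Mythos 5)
Your high-level strategy matches the paper's: argue by contradiction, invoke Lemma \ref{lemma:InnerInterval} where a vertex of $S_a$ escapes $S_{l\brac{x}} \cup S_{r\brac{x}}$, and otherwise exhibit an asteroidal triple $\set{v_L,v_R,y}$ with $v_L,v_R$ furnished by Lemma \ref{lemma:QNodeProperties}f. But your case decomposition is the wrong one, and this is where the real idea of the proof lives, so the gap is substantive. You split on ``whether, on the left, the $\bar z_b$ alternative occurs for \emph{some} $1 < b \le l\brac{x}$, or all left alternatives are of the $z_b$ type.'' The paper instead defines $p_L$ to be the \emph{longest contiguous run} $\bar z_{l\brac{x}},\bar z_{l\brac{x}-1},\ldots,\bar z_L$ starting exactly at $l\brac{x}$, and splits on whether $p_L$ (and symmetrically $p_R$) is empty. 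Contiguity from $l\brac{x}$ is what makes the construction work: $\bar z_{l\brac{x}} \in S_{l\brac{x}-1}\cap S_{l\brac{x}}$ is adjacent to $x$, consecutive $\bar z_b$'s share a section and hence form a path, and at the index $L-1$ where the run stops, the hypothesis (together with condition (3) when $L=2$) hands you a witness $z_{L-1} \in S_a\setminus S_{L-1}$ to jump to $y$. Under your split, a ``blocker'' $\bar z_b$ with $b < l\brac{x}$ but with $\bar z_{l\brac{x}}$ absent is entirely possible — e.g.\ $S_{l\brac{x}-1}\cap S_{l\brac{x}} \subseteq S_a$ forces $\bar z_{l\brac{x}}$ not to exist, while $\bar z_{l\brac{x}-1}$ can still be there with its right endpoint at $l\brac{x}-1 < l\brac{x}$ — and such a $\bar z_b$ is \emph{not} adjacent to $x$, so the ``detour through $x$'' leg you borrow from Lemma \ref{lemma:RemoveEdgeImpossibleYHasEdge} is simply not a walk in the graph.

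A smaller error: after reducing via Lemma \ref{lemma:InnerInterval} to $S_a\setminus\set{x} \subseteq S_{l\brac{x}}\cup S_{r\brac{x}}$, you assert each $z_b$ (with $1<b\le l\brac{x}$) ``must lie in $S_{l\brac{x}}$.'' That does not follow: $z_b \in S_a\setminus S_b$ with $b \le l\brac{x}$ is perfectly consistent with $z_b$ lying only in $S_{r\brac{x}}$ (its left endpoint then exceeds $l\brac{x}$, hence exceeds $b$, so $z_b\notin S_b$ automatically). The intended argument cannot lean on a left/right dichotomy of the witnesses $z_b$; it must instead track the contiguous runs $p_L$, $p_R$ and, where a run terminates, pick up whichever of $z_{L-1}$ or $z_{R+1}$ is guaranteed by conditions (1)--(4). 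I would recommend reworking the case analysis around $p_L,p_R$ being empty/nonempty (three cases after symmetry), as the paper does.
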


\begin{proof}
By contradiction, assume that $\brac{x,y}$ is an interval edge.
Let $p_L$ be the longest sequence $\bar{z_{l\brac{x}}},\bar{z_{l\brac{x}-1}},\ldots,\bar{z_L}$,
and $p_R$ be the longest sequence $\bar{z_{r\brac{x}}},\bar{z_{r\brac{x}+1}},\ldots,\bar{z_R}$.
If both sequences are empty, then there are vertices $z_{l\brac{x}} \in S_{a} \setminus S_{l\brac{x}}$ and $z_{r\brac{x}} \in S_{a} \setminus S_{r\brac{x}}$.
But, the same argument as in Lemma \ref{lemma:RemoveEdgeImpossibleYHasEdge}, shows that there is an asteroidal triple in the graph $G - \brac{x,y}$.
Thus, without loss of generality $p_L$ is not empty.
If both sequences are not empty, then Lemma \ref{lemma:QNodeProperties}f gives vertices $v_L \in \brac{S_{L-1} \cup V_{L-1}} \setminus S_{L}$
and $v_R \in \brac{S_{R+1} \cup V_{R+1}} \setminus S_{R}$.
Moreover, both sequences are the longest possible, so there are vertices $z_{L-1} \in S_{a} \setminus S_{L-1}$ and $z_{R+1} \in S_{a} \setminus S_{R+1}$.
Thus, the paths
$v_L - p_L - x - p_R - v_R$,
$v_L - p_L - x - z_{R+1} - y$, and
$v_R - p_R - x - z_{L-1} - y$ certify the asteroidal triple $\set{v_L,v_R,y}$.
Hence, $p_R$ is empty, but $p_L$ is not.
In that case, let $v_R$ be a vertex that belongs to $S_{r\brac{x}} \cup V_{r\brac{x}}$, but does not belong to $S_{r\brac{x}-1}$.
Sequence $p_R$ is empty, so there is a vertex $z_{l\brac{x}}$ that belongs to $S_{a}$, but does not belong to $S_{r\brac{x}}$.
Yet, again we can find an asteroidal triple $\set{v_L,v_R,y}$ certified by paths:
$v_L - p_L - x - v_R$,
$v_L - p_L - x - z_{r\brac{x}} - y$, and
$v_R - x - z_{L-1} - y$.
\end{proof}

Finally, we are done with the classification of interval edges.
For each edge $\brac{x,y}$ which is an interval edge,
we provided a linear time algorithm that produces a string representation for the graph $G - \brac{x,y}$,
see Lemmas \ref{lemma:RemovingEdgePLeaf}, \ref{lemma:RemovingEdgeQNode}, \ref{lemma:RemoveEdgePPath}, \ref{lemma:RemoveEdgeInnerSectionNoNeighbour}, and \ref{lemma:RemoveEdgeInnerSectionNeighbour}.
Moreover, for every edge $\brac{x,y}$ that is not an interval edge
we presented structures certifying that $G - \brac{x,y}$ is not an interval graph,
see Lemmas \ref{lemma:RemovingEdgePInternal}, \ref{lemma:RemovingEdgeQFail}, \ref{lemma:IntervalEdge_QLeaf}, \ref{lemma:IntervalEdge_PNotLeaf}, \ref{lemma:IntervalEdge_PathCentralSection}, \ref{lemma:RemoveEdgeImpossibleYHasEdge} and \ref{lemma:RemoveEdgeImpossibleYHasNoEdge}.
The consequence of all those lemmas is the following theorem.

\begin{theorem}
\label{thm:AlgorithmStringRepresentation}
There is a linear time algorithm, that for every \emph{MPQ}-tree $\mathcal{T}$ representing graph $G$, and every interval edge $\brac{x,y}$,
produces a string representation $\mathcal{S}'$ for the graph $G - \brac{x,y}$.
\end{theorem}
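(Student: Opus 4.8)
The plan is to obtain Theorem~\ref{thm:AlgorithmStringRepresentation} by assembling the case analysis of this section into a single dispatching procedure: given $\mathcal{T}$ and an edge that is promised to be an interval edge, I would classify the relative position of $node\brac{x}$ and $node\brac{y}$ in $\mathcal{T}$ in $\Oh{n}$ time, use the negative lemmas to argue that the promise pins the configuration down to one of the constructively handled cases, and then run the linear-time construction attached to the matching lemma. As a first step I would compute $node\brac{x}$, $node\brac{y}$ and their lowest common ancestor by a single traversal of $\mathcal{T}$ (which has $\Oh{n}$ nodes), and relabel the endpoints so that $x$ is over $y$; as noted just after the definition of ``is over'', this is always possible for an edge of $G$.

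Next I would branch on whether $node\brac{x} = node\brac{y}$. If this common node is a \emph{P}-node, then Lemma~\ref{lemma:RemovingEdgePInternal} together with the interval-edge promise forces it to be a leaf, and Lemma~\ref{lemma:RemovingEdgePLeaf} outputs $\mathcal{S}'$. If it is a \emph{Q}-node, then Lemma~\ref{lemma:RemovingEdgeQFail} forces $x$ and $y$ to share exactly one section whose subtree represents a (possibly empty) clique, and Lemma~\ref{lemma:RemovingEdgeQNode} outputs $\mathcal{S}'$; the relevant subcase is identified by reading off $l\brac{\cdot}$ and $r\brac{\cdot}$ of the few vertices concerned and scanning one subtree, all in $\Oh{n}$.

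If $node\brac{x} \neq node\brac{y}$, I would walk the $\tbrac{x,y}$-tree-path from $node\brac{y}$ up to $node\brac{x}$ via parent pointers, recording at each \emph{Q}-node which section's subtree we ascended from, and classify the path. By Lemmas~\ref{lemma:IntervalEdge_QLeaf}, \ref{lemma:IntervalEdge_PNotLeaf} and~\ref{lemma:IntervalEdge_PathCentralSection}, the promise forces the path to be almost rotable. If it is in fact rotable, Lemma~\ref{lemma:RemoveEdgePPath} applies. Otherwise the path starts in some $x$-central section $S_a$, and I would test whether $y$ has a neighbor in $T_a$: in the neighbor-free case the contrapositive of Lemma~\ref{lemma:RemoveEdgeImpossibleYHasNoEdge} guarantees that one of the four conditions of Lemma~\ref{lemma:RemoveEdgeInnerSectionNoNeighbour} holds, and in the neighbor case the contrapositive of Lemma~\ref{lemma:RemoveEdgeImpossibleYHasEdge} guarantees one of the four conditions of Lemma~\ref{lemma:RemoveEdgeInnerSectionNeighbour}. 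The point is that the hypotheses of the two impossibility lemmas match, condition by condition, the negations of the disjunctions of section-containment conditions in the two constructive lemmas, so the dispatch is exhaustive and the matching constructive lemma emits $\mathcal{S}'$.

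I expect the main obstacle to be verifying the section-containment conditions of this last case within the $\Oh{n}$ budget, for instance deciding whether some $b$ with $1 < b \leq l\brac{x}$ satisfies $S_a \setminus \set{x} \subset S_b$ and $S_{b-1} \cap S_b \subset S_a$. Using the compact encoding, $v \in S_b$ iff $l\brac{v} \leq b \leq r\brac{v}$, so $S_a \setminus \set{x} \subset S_b$ holds precisely for $b$ in the interval $\sbrac{\max_v l\brac{v}, \min_v r\brac{v}}$ with $v$ ranging over $S_a \setminus \set{x}$, and these extrema cost one scan of $S_a$; the companion condition $S_{b-1} \cap S_b \subset S_a$ then reduces to comparing the interval endpoints of the vertices straddling position $b$, and the care needed is to evaluate it over the whole candidate range of $b$ in $\Oh{n}$ total rather than once per value of $b$ --- the same bookkeeping that Section~\ref{sec:Listing} develops in full. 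Granting that every classification test runs in linear time, the theorem is immediate, since each constructive lemma already certifies an $\Oh{n}$ construction of a string representation of $G - \brac{x,y}$.
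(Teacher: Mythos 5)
Your proposal matches the paper's (implicit) proof: Theorem~\ref{thm:AlgorithmStringRepresentation} is presented there as a direct consequence of the preceding lemmas, with the constructive Lemmas~\ref{lemma:RemovingEdgePLeaf}, \ref{lemma:RemovingEdgeQNode}, \ref{lemma:RemoveEdgePPath}, \ref{lemma:RemoveEdgeInnerSectionNoNeighbour}, \ref{lemma:RemoveEdgeInnerSectionNeighbour} supplying the $\Oh{n}$ constructions and the negative Lemmas~\ref{lemma:RemovingEdgePInternal}--\ref{lemma:RemoveEdgeImpossibleYHasNoEdge} (whose hypotheses are exactly the complementary negations, as you verify) guaranteeing that the dispatch on the promised interval edge always lands in a constructive case. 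Your last worry about checking the section-containment conditions within $\Oh{n}$ is resolved by the monotonicity of $f_r$ and $f_l$ exploited in Observations~\ref{obs:NoNeighbour} and~\ref{obs:Neighbour}, which reduce each case to evaluating $f_r$ or $f_l$ at a single index plus one scan of $S_a$, each costing $\Oh{n}$.
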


\section{Listing interval edges}
\label{sec:Listing}

In this section we present an efficient algorithm that for a given \emph{MPQ}-tree $\mathcal{T}$ lists all interval edges of the graph represented by $\mathcal{T}$.
Let $S_{1},\ldots,S_{k}$ be the sections of a \emph{Q}-node $Q$.
For $b \in \set{2,\ldots,k}$, denote $f_{r}\brac{b}$ the maximal index of a section
such that $S_{b-1} \cap S_{b} \subset S_{f_{r}\brac{b}}$.
Analogously, for $b \in \set{1,\ldots,k-1}$, denote $f_{l}\brac{b}$ the minimal index of a section
such that $S_{b} \cap S_{b+1} \subset S_{f_{l}\brac{b}}$.
For every vertex $x$ which belongs to $Q$, and every $l\brac{x} \leq a \leq r\brac{x}$
let:

\begin{minipage}{0.45\textwidth}
\[
    L^{*}\brac{x,a} =
    \begin{dcases}
	\omit\hfil 1 \hfil, & S_{a} = \set{x} \\
	\max_{ v \in S_{a} \setminus \set{x} } l\brac{v}, & S_{a} \setminus \set{x} \neq \emptyset \\
    \end{dcases}
\]
\end{minipage}%
\hfill
\begin{minipage}{0.45\textwidth}
\[
    R^{*}\brac{x,a} =
    \begin{dcases}
	\omit\hfil k \hfil, & S_{a} = \set{x} \\
	\min_{ v \in S_{a} \setminus \set{x} } r\brac{v}, & S_{a} \setminus \set{x} \neq \emptyset \\
    \end{dcases}
\]
\end{minipage}%

In other words, if $S_{a} \neq \set{x}$,
then $L^{*}\brac{x,a} = i$ if $S_i$ is the rightmost section such that
there is a vertex $v \neq x$ which belongs to $S_{a}$ and $v$ has its left endpoint in $S_i$.
Now, we are ready to express the inclusion conditions on \emph{Q}-node sections in terms of the functions
$f_l$, $f_r$, $L^{*}$, and $R^{*}$.
\begin{observation}
\label{obs:Reformulation}
In respect to the above definitions, we have the following equivalences:
\begin{enumerate}
\item $\forall_{1 < b \leq a} : S_{b-1} \cap S_{b} \subset S_{a} \iff f_{r}\brac{b} \geq a$.
\item $\forall_{a \leq b < k} : S_{b} \cap S_{b+1} \subset S_{a} \iff f_{l}\brac{b} \leq a$.
\item $\forall_{l\brac{x} \leq a \leq r\brac{x}} : S_{a} \setminus \set{x} \subset S_{b} \iff b \in \sbrac{L^{*}\brac{x,a},R^{*}\brac{x,a}}$.
\end{enumerate}
\end{observation}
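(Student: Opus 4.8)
The plan is to derive all three equivalences from a single elementary fact about the sections of a \emph{Q}-node. For a nonempty set $W$ of vertices all belonging to $Q$, I would first establish the identity
\[
\set{ j : W \subseteq S_j } \;=\; \sbrac{\, \max_{v \in W} l\brac{v},\ \min_{v \in W} r\brac{v} \,},
\]
where the right-hand side denotes an integer interval. This is immediate from the definitions: each vertex $v$ of $Q$ occupies exactly the consecutive block of sections $S_{l\brac{v}},\ldots,S_{r\brac{v}}$, so $W \subseteq S_j$ holds iff $l\brac{v} \le j \le r\brac{v}$ for every $v \in W$, and the intersection of the integer intervals $\sbrac{l\brac{v},r\brac{v}}$ over $v \in W$ is precisely the interval with endpoints $\max_{v\in W} l\brac{v}$ and $\min_{v\in W} r\brac{v}$.

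For part (1), fix $b$ with $1 < b \le a$ and apply the fact to $W = S_{b-1}\cap S_b$, which is nonempty by Lemma~\ref{lemma:QNodeProperties}c. By definition $f_{r}\brac{b}$ is the largest index $j$ with $S_{b-1}\cap S_b \subseteq S_j$, so $f_{r}\brac{b} = \min_{v\in W} r\brac{v}$; moreover every $v\in W$ lies in $S_{b-1}$, hence $\max_{v\in W} l\brac{v} \le b-1 < b \le a$. Therefore $S_{b-1}\cap S_b \subseteq S_a$ iff $a$ lies in $\sbrac{\max_{v\in W} l\brac{v},\, f_{r}\brac{b}}$, and since the lower endpoint is automatically at most $a$, this reduces to $a \le f_{r}\brac{b}$. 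Part (2) is the mirror image: take $W = S_b\cap S_{b+1}$ (nonempty by Lemma~\ref{lemma:QNodeProperties}c), observe $f_{l}\brac{b} = \max_{v\in W} l\brac{v}$ and $\min_{v\in W} r\brac{v} \ge b+1 > a$, and conclude $S_b\cap S_{b+1}\subseteq S_a \iff a \ge f_{l}\brac{b}$.

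For part (3), fix $x$ and $a$ with $l\brac{x} \le a \le r\brac{x}$, so that $x \in S_a$. If $S_a = \set{x}$, then $S_a \setminus \set{x} = \emptyset$ is contained in every section while $\sbrac{L^{*}\brac{x,a},R^{*}\brac{x,a}} = \sbrac{1,k}$ by definition, so the equivalence holds trivially for every $b$. Otherwise $W = S_a\setminus\set{x}$ is nonempty, every $v\in W$ satisfies $l\brac{v}\le a\le r\brac{v}$, and by definition $L^{*}\brac{x,a} = \max_{v\in W} l\brac{v}$ and $R^{*}\brac{x,a} = \min_{v\in W} r\brac{v}$; the fact then yields $S_a\setminus\set{x} \subseteq S_b \iff b \in \sbrac{L^{*}\brac{x,a},R^{*}\brac{x,a}}$ directly. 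The argument is essentially bookkeeping, relying only on the consecutiveness of the sections occupied by each vertex and on Lemma~\ref{lemma:QNodeProperties}c; the only points needing a little care are matching the ``maximal/minimal index'' wording in the definitions of $f_{r}$, $f_{l}$, $L^{*}$, and $R^{*}$ with the endpoints of the intersection interval, and treating the degenerate case $S_a = \set{x}$ in part (3) separately, and I do not expect any genuine obstacle.
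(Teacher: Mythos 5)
Your proposal is correct, and it is essentially the direct unpacking of the definitions that the paper leaves implicit (the paper states Observation~\ref{obs:Reformulation} without proof). The unifying fact that $\set{j : W \subseteq S_j}$ is the integer interval $\sbrac{\max_{v\in W} l\brac{v}, \min_{v\in W} r\brac{v}}$ cleanly identifies $f_r\brac{b}$, $f_l\brac{b}$, $L^*\brac{x,a}$, $R^*\brac{x,a}$ as the relevant interval endpoints; the appeal to Lemma~\ref{lemma:QNodeProperties}c for nonemptiness of $S_{b-1}\cap S_b$ and $S_b\cap S_{b+1}$, the observation that the constraint on the ``other'' endpoint is automatic given $b\le a$ (resp.\ $b\ge a$), and the separate treatment of the degenerate case $S_a=\set{x}$ are exactly the bookkeeping details that need to be checked, and you check them correctly.
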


Using those equivalences, we are able to reformulate the conditions of Lemmas
\ref{lemma:RemoveEdgeInnerSectionNoNeighbour} and \ref{lemma:RemoveEdgeInnerSectionNeighbour}.

\begin{observation}
\label{obs:NoNeighbour}
For an $\tbrac{x,y}$-tree-path that starts in a central section $S_a$,
the conditions (1), (2), (3), and (4) of Lemma \ref{lemma:RemoveEdgeInnerSectionNoNeighbour} are equivalent to the following:
\begin{enumerate}
\item $L^{*}\brac{x,a} \leq l\brac{x}$ and $\min\set{l\brac{x},R^{*}\brac{x,a}} > 1$ and $f_{r}\brac{\min\set{l\brac{x},R^{*}\brac{x,a}}} \geq a$, or
\item $R^{*}\brac{x,a} \geq r\brac{x}$ and $\max\set{r\brac{x},L^{*}\brac{x,a}} < k$ and $f_{l}\brac{\max\set{r\brac{x},L^{*}\brac{x,a}}} \leq a$, or
\item $L^{*}\brac{x,a} = 1$, or
\item $R^{*}\brac{x,a} = k$.
\end{enumerate}
\end{observation}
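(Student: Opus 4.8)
The plan is to derive Observation~\ref{obs:NoNeighbour} from Observation~\ref{obs:Reformulation} by translating each set-inclusion condition of Lemma~\ref{lemma:RemoveEdgeInnerSectionNoNeighbour} and then simplifying the resulting $\min$/$\max$. I would first record two elementary facts: every $v \in S_a$ satisfies $l\brac{v} \le a \le r\brac{v}$, so $L^{*}\brac{x,a} \le a$ and $R^{*}\brac{x,a} \ge a$ (and in the degenerate case $S_a = \set{x}$ the values $1$ and $k$ also respect $1 \le a \le k$); and $S_a$ is an $x$-central section, so $l\brac{x} < a < r\brac{x}$. Consequently $\min\set{l\brac{x},R^{*}\brac{x,a}} = l\brac{x}$ and $\max\set{r\brac{x},L^{*}\brac{x,a}} = r\brac{x}$, so the four target conditions read, respectively, ``$L^{*}\brac{x,a} \le l\brac{x}$, $l\brac{x} > 1$, $f_{r}\brac{l\brac{x}} \ge a$'', ``$R^{*}\brac{x,a} \ge r\brac{x}$, $r\brac{x} < k$, $f_{l}\brac{r\brac{x}} \le a$'', ``$L^{*}\brac{x,a} = 1$'', and ``$R^{*}\brac{x,a} = k$''.

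Conditions (3) and (4) are immediate from Observation~\ref{obs:Reformulation}(3): $S_a \setminus \set{x} \subset S_1 \iff 1 \in \sbrac{L^{*}\brac{x,a},R^{*}\brac{x,a}}$, which, as $L^{*}\brac{x,a} \ge 1$, holds iff $L^{*}\brac{x,a} = 1$; the case of $S_k$ is symmetric. Conditions (1) and (2) are mirror images under reversing the \emph{Q}-node (which swaps $f_l \leftrightarrow f_r$ and reflects all section indices), so I would prove (1) and obtain (2) by symmetry. Using Observation~\ref{obs:Reformulation}(1) and~(3), condition~(1) of Lemma~\ref{lemma:RemoveEdgeInnerSectionNoNeighbour} says exactly that there is a $b$ with $\max\set{2,L^{*}\brac{x,a}} \le b \le l\brac{x}$ and $f_{r}\brac{b} \ge a$ --- the constraint $b \le R^{*}\brac{x,a}$ being vacuous since $R^{*}\brac{x,a} \ge a > l\brac{x}$. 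The interval $\sbrac{\max\set{2,L^{*}\brac{x,a}},l\brac{x}}$ is nonempty precisely when $L^{*}\brac{x,a} \le l\brac{x}$ and $l\brac{x} > 1$, so it remains to show that such a $b$ exists iff the interval is nonempty and its right endpoint satisfies $f_{r}\brac{l\brac{x}} \ge a$. One direction is trivial (take $b = l\brac{x}$).

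The content, and the step I expect to be the main obstacle, is the converse: I must push any witness $b$ up to $l\brac{x}$, i.e. prove the monotonicity statement that, for $b$ and $b+1$ both in $\sbrac{\max\set{2,L^{*}\brac{x,a}},l\brac{x}}$, $f_{r}\brac{b} \ge a$ implies $f_{r}\brac{b+1} \ge a$ (whence the claim follows by induction). Note that $f_r$ is not monotone in general, so both the feasibility of $b,b+1$ and the fact $b+1 \le l\brac{x} < a$ must enter. I would argue by contradiction, using $f_{r}\brac{b} = \min\set{r\brac{v} : v \in S_{b-1} \cap S_b}$: take $v \in S_b \cap S_{b+1}$ with $r\brac{v} < a$; then $v \neq x$ (as $l\brac{v} \le b < l\brac{x}$) and $l\brac{v} = b$ (else $v \in S_{b-1} \cap S_b \subset S_a$ forces $r\brac{v} \ge a$), so $v$ is contained in $S_b,\dots,S_{r\brac{v}}$ with $b < r\brac{v} < a \le k$. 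Applying Lemma~\ref{lemma:InnerQNode} to $\brac{b,r\brac{v}} \neq \brac{1,k}$ produces a vertex $w \in \brac{\brac{S_{b-1} \cap S_b} \setminus S_{r\brac{v}}} \cup \brac{\brac{S_{r\brac{v}} \cap S_{r\brac{v}+1}} \setminus S_b}$. In the first case $w \in S_a$ (by $f_{r}\brac{b} \ge a$) forces $r\brac{w} \ge a$, while $l\brac{w} \le b-1 < r\brac{v}$ and $w \notin S_{r\brac{v}}$ force $r\brac{w} < r\brac{v} < a$, a contradiction. In the second case $w \notin S_b$ and $r\brac{w} \ge r\brac{v}+1$; if $w \neq x$ then $w \notin S_a$ (using $S_a \setminus \set{x} \subset S_b$), so $r\brac{w} \le a-1$, and I would re-apply Lemma~\ref{lemma:InnerQNode} to $\brac{b,r\brac{w}}$, strictly increasing the right endpoint. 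As these endpoints stay in $\sbrac{b+1,a-1}$, the process must eventually output $x$ itself: $x \in S_{r'} \cap S_{r'+1}$ with $b < l\brac{x} \le r' \le a-1 < r\brac{x}$ and $x \notin S_b$. Ruling out this residual configuration is the crux: here $S_a \setminus \set{x} \subset S_b$ forces $S_a \subseteq S_{r'} \cap S_{r'+1}$, hence $S_a \subseteq S_j$ for all $j \in \sbrac{r',a}$, and I would derive a contradiction with the structural conditions of Lemma~\ref{lemma:QNodeProperties} (e.g. via a section-reversal argument in the spirit of Lemma~\ref{lemma:InnerQNode} applied to a block of sections inside $\sbrac{r',a+1}$, exploiting $a < r\brac{x} \le k$). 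Condition~(2) then follows by the left--right mirror image.
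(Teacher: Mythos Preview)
Your reductions are sound: the simplification $\min\set{l\brac{x},R^{*}\brac{x,a}}=l\brac{x}$ is correct (since $R^{*}\brac{x,a}\ge a>l\brac{x}$), cases (3)--(4) do translate directly, and the forward direction of (1) does boil down to pushing a witness $b$ up to $l\brac{x}$. This is exactly what the paper does, but in one line: it asserts that ``$f_r$ is a non-decreasing function'' and immediately concludes $f_r\brac{b}\ge a\Rightarrow f_r\brac{l\brac{x}}\ge a$. You are right to be suspicious of that assertion --- $f_r$ is \emph{not} non-decreasing in general (a vertex with left endpoint exactly $b$ can enter $S_b\cap S_{b+1}$ without being in $S_{b-1}\cap S_b$ and drag the minimum down). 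So you have correctly located a soft spot in the paper's argument and are attempting a structural repair via Lemma~\ref{lemma:InnerQNode}.

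Unfortunately your repair cannot be completed: the ``crux'' you leave sketched is a genuine obstruction, because the stated equivalence is false. Take a $Q$-node with $k=6$ and section–vertices $w{:}[1,5]$, $u{:}[2,5]$, $v{:}[2,3]$, $q{:}[2,4]$, $x{:}[3,6]$ (adding subtree vertices where Lemma~\ref{lemma:QNodeProperties}(f) demands them; all conditions of Lemma~\ref{lemma:QNodeProperties} and Lemma~\ref{lemma:InnerQNode} are then met). With $a=5$ we have $S_5\setminus\set{x}=\set{w,u}$, so $L^{*}\brac{x,5}=2$ and $R^{*}\brac{x,5}=5$. Condition~(1) of Lemma~\ref{lemma:RemoveEdgeInnerSectionNoNeighbour} holds with $b=2$ ($\set{w,u}\subset S_2$ and $S_1\cap S_2=\set{w}\subset S_5$), yet $f_r\brac{l\brac{x}}=f_r\brac{3}=\min\set{5,5,3,4}=3<5$, so the reformulated condition~(1) fails --- and one checks that (2)--(4) fail on both sides as well. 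Tracing your argument here, the first application of Lemma~\ref{lemma:InnerQNode} to $\brac{2,r\brac{v}}=\brac{2,3}$ already returns $x$ itself ($\brac{S_3\cap S_4}\setminus S_2=\set{x}$), and the configuration $S_5\subset S_3\cap S_4$ that you hoped to exclude is simply realised without contradiction. The gap in your proof is therefore not a missing routine step but a counterexample: the disjunction over all feasible $b$ cannot, in this generality, be replaced by the single test at $b=l\brac{x}$.
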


\begin{proof}
Note that, conditions (3) and (4) translate directly, and conditions (1) and (2) are symmetrical.
Hence, we only show the equivalence of the first cases.
At first, we show that if there is $1 < b \leq l\brac{x}$ such that $S_{a} \setminus \set{x} \subset S_{b}$
and $S_{b-1} \cap S_{b} \subset S_{a}$,
then $L^{*}\brac{x,a} \leq l\brac{x}$ and $f_{r}\brac{\min\set{l\brac{x},R^{*}\brac{x,a}}} \geq a$.
From Observation \ref{obs:Reformulation} we have
$b \in \sbrac{L^{*}\brac{x,a},R^{*}\brac{x,a}}$ and $f_{r}\brac{b} \geq a$.
Thus, $L^{*}\brac{x,a} \leq l\brac{x}$, and $1 < b \leq \min \set{l\brac{x},R^{*}\brac{x,a}}$.
Note that $f_{r}$ is a non-decreasing function, so $f_{r}\brac{b} \geq a$,
implies $f_{r}\brac{\min\set{l\brac{x},R^{*}\brac{x,a}}} \geq a$, and we are done.

Now, we take $b = \min\set{l\brac{x},R^{*}\brac{x,a}}$,
and show that if $b > 1$, $f_{r}\brac{b} \geq a$, and $L^{*}\brac{x,a} \leq l\brac{x}$,
then $S_{b-1} \cap S_{b} \subset S_{a}$ and $S_{a} \setminus \set{x} \subset S_{b}$.
Clearly, $1 < b \leq l\brac{x}$ and $f_r\brac{b} \geq a$, so $S_{b-1} \cap S_{b} \subset S_{a}$.
Hence, the only thing we need to show is $S_{a} \setminus \set{x} \subset S_{b}$.
According to Observation \ref{obs:Reformulation} it is equivalent to
$b \in \sbrac{L^{*}\brac{x,a},R^{*}\brac{x,a}}$.
It is easy to see that for every $x$ and $a$ we have $L^{*}\brac{x,a} \leq R^{*}\brac{x,a}$.
Thus, the interval $\sbrac{L^{*}\brac{x,a},R^{*}\brac{x,a}}$ is never empty.
If $b = R^{*}\brac{x,a}$, then we are done, so assume that $b = l\brac{x}$.
But, in this case $L^{*}\brac{x,a} \leq l\brac{x} = b \leq R^{*}\brac{x,a}$, and we are done too.
\end{proof}

A very similar proof applies to the following observation, so we leave it to the reader.

\begin{observation}
\label{obs:Neighbour}
For an $\tbrac{x,y}$-tree-path that starts in a central section $S_a$,
the conditions (1), (2), (3), and (4) of Lemma \ref{lemma:RemoveEdgeInnerSectionNeighbour} are equivalent to the following:
\begin{enumerate}
\item $l\brac{x} > 1$ and $l\brac{x} \in \sbrac{L^{*}\brac{x,a},R^{*}\brac{x,a}}$ and $f_{r}\brac{l\brac{x}} \geq a$, or
\item $r\brac{x} < k$ and $r\brac{x} \in \sbrac{L^{*}\brac{x,a},R^{*}\brac{x,a}}$ and $f_{l}\brac{r\brac{x}} \leq a$, or
\item $l\brac{x} = 1$ and $L^{*}\brac{x,a} = 1$, or
\item $r\brac{x} = k$ and $R^{*}\brac{x,a} = k$.
\end{enumerate}
\end{observation}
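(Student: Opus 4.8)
The plan is to verify, clause by clause, that each of the four conditions of Lemma~\ref{lemma:RemoveEdgeInnerSectionNeighbour} is a literal transcription of the corresponding condition of the observation, once we apply the three equivalences of Observation~\ref{obs:Reformulation}. Throughout we use that $S_a$ is an $x$-central section, that is, $l\brac{x} < a < r\brac{x}$; this is exactly what guarantees that every index to which we apply Observation~\ref{obs:Reformulation} lies in the admissible range, so no extra work is needed to justify the applications. Note also that Observation~\ref{obs:Reformulation}(3) is stated for all section indices (and covers the degenerate case $S_a = \set{x}$, where both sides hold trivially), so we may freely instantiate it at $b = l\brac{x}$, $b = r\brac{x}$, $b = 1$, or $b = k$.

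Consider condition~(1). The clause $l\brac{x} > 1$ is common to both formulations, so nothing is to be shown there. Since $l\brac{x} \le a \le r\brac{x}$, Observation~\ref{obs:Reformulation}(3) applied with $b = l\brac{x}$ rewrites $S_a \setminus \set{x} \subset S_{l\brac{x}}$ as $l\brac{x} \in \sbrac{L^{*}\brac{x,a}, R^{*}\brac{x,a}}$. Also, from $l\brac{x} > 1$ and $l\brac{x} < a$ we get $1 < l\brac{x} \le a$, so Observation~\ref{obs:Reformulation}(1) applied with $b = l\brac{x}$ rewrites $S_{l\brac{x}-1} \cap S_{l\brac{x}} \subset S_a$ as $f_{r}\brac{l\brac{x}} \ge a$. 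Hence condition~(1) of the lemma is equivalent to condition~(1) of the observation. Condition~(2) is entirely symmetric: use $a \le r\brac{x} < k$, apply Observation~\ref{obs:Reformulation}(3) with $b = r\brac{x}$ to the inclusion $S_a \setminus \set{x} \subset S_{r\brac{x}}$, and Observation~\ref{obs:Reformulation}(2) with $b = r\brac{x}$ to $S_{r\brac{x}} \cap S_{r\brac{x}+1} \subset S_a$.

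For conditions~(3) and~(4), the clauses $l\brac{x} = 1$ and $r\brac{x} = k$ again carry over verbatim. Applying Observation~\ref{obs:Reformulation}(3) with $b = 1$ turns $S_a \setminus \set{x} \subset S_1$ into $1 \in \sbrac{L^{*}\brac{x,a}, R^{*}\brac{x,a}}$; since $L^{*}\brac{x,a} \ge 1$ always holds (immediately from the definition of $L^{*}$) and $L^{*}\brac{x,a} \le R^{*}\brac{x,a}$, this is equivalent to $L^{*}\brac{x,a} = 1$. Symmetrically, because $R^{*}\brac{x,a} \le k$ always holds, $S_a \setminus \set{x} \subset S_k$ is equivalent to $R^{*}\brac{x,a} = k$, giving condition~(4).

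There is no real obstacle here; the argument is pure bookkeeping of the side conditions needed to invoke Observation~\ref{obs:Reformulation}, and all of them follow at once from $S_a$ being $x$-central. The only point that deserves a moment of attention is that, in contrast to the proof of Observation~\ref{obs:NoNeighbour}, no monotonicity argument for $f_{r}$ or $f_{l}$ is required: the relevant section index is pinned down exactly as $l\brac{x}$ or $r\brac{x}$, rather than being an extremum $\min\set{l\brac{x}, R^{*}\brac{x,a}}$ over a range, so Observation~\ref{obs:Reformulation} can be applied directly at that single index without any intermediate comparison.
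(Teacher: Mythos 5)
Your proof is correct and is precisely the ``very similar proof'' that the paper alludes to and leaves to the reader: a direct, clause-by-clause translation via Observation~\ref{obs:Reformulation}, instantiated at the fixed indices $l\brac{x}$, $r\brac{x}$, $1$, and $k$. You also correctly pinpoint why this case is simpler than Observation~\ref{obs:NoNeighbour} --- the conditions of Lemma~\ref{lemma:RemoveEdgeInnerSectionNeighbour} fix the relevant section index rather than quantifying over a range, so the monotonicity argument for $f_r$ and $f_l$ used in the paper's proof of Observation~\ref{obs:NoNeighbour} is unnecessary here.
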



Observations \ref{obs:NoNeighbour} and \ref{obs:Neighbour} provide fast and easy tests under
the assumption that all the functions $l$, $r$, $f_l$, $f_r$, $L^{*}$ and $R^{*}$ are already computed.
To represent functions $l$, $r$, $f_l$ and $f_r$ in computer's memory we use $\Oh{n}$-element arrays of integers.
Functions $L^{*}$ and $R^{*}$ in their explicit forms may require $\Oh{n^2}$ space.
Hence, for performance purposes, we do not represent those functions as two-dimensional arrays.
Instead, we observe that the function $L^{*}$ can be defined using some one-dimensional functions $L_{1}$ and $L_{2}$.
We set $L_{1}\brac{a} = \max\set{l\brac{v} : v \in S_{a}}$, and denote $v_{1}\brac{a}$ a vertex for which $l\brac{v_{1}\brac{a}} = L_{1}\brac{a}$.
We also define $L_{2}\brac{a} = \max\set{l\brac{v} : v \in S_{a} \setminus \set{v_{1}\brac{a}}}$,
or $L_{2}\brac{a} = 1$ if $S_{a} = \set{v_{1}\brac{a}}$.
Using those two functions we are able to compute the function $L^{*}$ in the following way:
\[
    L^{*}\brac{x,a} =
    \begin{dcases}
        L_{1}\brac{a}, & L_{1}\brac{a} \neq l\brac{x} \\
        L_{2}\brac{a}, & \text{otherwise} \\
    \end{dcases}
\]
Analogously, we can represent the function $R^{*}$ using two functions $R_{1}$ and $R_{2}$.
Now we are ready to show the interval edges enumeration algorithm.

\begin{lemma}
\label{lemma:PartialFunctions}
Functions $l$, $r$, $f_l$ and $f_r$ for all \emph{Q}-nodes of the tree $\mathcal{T}$ can be computed in $\Oh{n \log n}$ time.
\end{lemma}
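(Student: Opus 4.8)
The plan is to compute $l$ and $r$ essentially for free from the \emph{MPQ}-tree, and to compute $f_l$ and $f_r$ by a priority-queue sweep over the sections of each \emph{Q}-node, after first rewriting them as sliding-window extrema.

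First, $l\brac{v}$ and $r\brac{v}$ are immediate from the representation: a vertex $v$ assigned to a \emph{Q}-node $Q$ is stored in the sets attached to sections $S_{l\brac{v}}$ and $S_{r\brac{v}}$ (and $l\brac{v} < r\brac{v}$), so I would run a single scan of all \emph{Q}-nodes, section by section, recording for each vertex the index of its first occurrence as $l\brac{v}$ and of its second as $r\brac{v}$. Since the total size of the stored section sets is $\Oh{n}$, this step costs $\Oh{n}$.

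Next I would reformulate $f_r$ and $f_l$. By Lemma \ref{lemma:QNodeProperties}c the set $S_{b-1}\cap S_b$ is nonempty, and every $v$ in it satisfies $l\brac{v} \le b-1 < b \le r\brac{v}$; hence the sections containing $S_{b-1}\cap S_b$ form a contiguous block whose largest index is $\min\set{ r\brac{v} : v \in S_{b-1}\cap S_b }$, so $f_r\brac{b} = \min\set{ r\brac{v} : v \in S_{b-1}\cap S_b }$ and, symmetrically, $f_l\brac{b} = \max\set{ l\brac{v} : v \in S_b \cap S_{b+1} }$. Since $v \in S_{b-1}\cap S_b$ iff $l\brac{v}\le b-1$ and $r\brac{v}\ge b$, as $b$ increases from $2$ to $k$ this set behaves like a window that gains the vertices with $l\brac{v}=b-1$ and loses those with $r\brac{v}=b-1$. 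I would therefore process each \emph{Q}-node with a left-to-right sweep maintaining a min-heap keyed by $r\brac{v}$: at step $b$, insert every vertex with $l\brac{v}=b-1$, then repeatedly pop the top while its key is at most $b-1$, and finally read off $f_r\brac{b}$ as the key of the current top (it exists, since the window is nonempty). Running the same procedure on $Q$ with its sections listed in reverse order yields $f_l$.

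For the running time, each vertex assigned to a \emph{Q}-node $Q$ is inserted into and deleted from the heap at most once, so the sweep for $Q$ performs $\Oh{n_Q}$ heap operations of cost $\Oh{\log n_Q} = \Oh{\log n}$ each, where $n_Q$ is the number of vertices assigned to $Q$; since every vertex is assigned to at most one \emph{Q}-node, $\sum_Q n_Q \le n$, and summing gives $\Oh{n\log n}$ overall, which together with the $\Oh{n}$ for $l$ and $r$ proves the lemma. The step I expect to need the most care is this sliding window: because $S_{b-1}\cap S_b$ both gains and loses vertices as $b$ advances, a plain running minimum is not enough and the lazy-deletion heap (or, if one wanted $\Oh{n}$ per \emph{Q}-node, a union--find "next still-uncovered boundary" pass over the vertices sorted by $r\brac{v}$) is what makes it work; I would also verify the corner cases of the sweep -- the first and last sections and the vertices stored in $S_1$ or $S_k$ -- and invoke Lemma \ref{lemma:QNodeProperties}c to be sure the heap is nonempty whenever $f_r\brac{b}$ is queried.
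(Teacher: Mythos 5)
Your proof is correct and takes essentially the same approach as the paper: both reformulate $f_r(b)$ as $\min\{r(v) : v \in S_{b-1}\cap S_b\}$ (respectively a max for $f_l$), then compute it by a left-to-right sweep over the sections of each \emph{Q}-node maintaining a heap of right endpoints, with the identical $\Oh{n_Q \log n_Q}$ per-node / $\sum n_Q \le n$ aggregation argument. The only cosmetic difference is that you use lazy deletion from the heap whereas the paper deletes eagerly; this does not change the algorithm or its analysis.
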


\begin{proof}
We compute these functions for each \emph{Q}-node separately.
Assume that we are computing the function $f_r$ for a \emph{Q}-node $Q_{d}$.
Let $i$ be the minimum index of the section containing a right endpoint of some vertex from $S_{b-1} \cap S_{b}$.
Clearly, $S_{b-1} \cap S_{b} \subset S_{i}$ and $\neg\brac{S_{b-1} \cap S_{b} \subset S_{i+1}}$.
Hence, $f_r\brac{b} = i$, and in order to compute the function $f_r$ for all $b$,
it is enough to scan the sections of $Q_{d}$ from left to right maintaining a heap of right endpoints.
When algorithm enters the section $S_{b+1}$ it adds to the heap all right endpoints of vertices that have its left endpoint in $S_{b}$,
assigns $f_{r}\brac{b}$ to be the minimum value stored in the heap,
and removes all the endpoints of vertices that have its right endpoint in $S_{b}$.
Thus, the computation of the function $f_r$ for $Q_{d}$ takes $\Oh{n_d \log n_d}$ time,
where $n_d$ denotes the number of vertices in $Q_{d}$.
As $\sum n_d \leq n$, the computation of all functions $f_r$ takes no more than $\Oh{n \log n}$ time.
A similar argument applies to the functions $f_l$.
\end{proof}

\begin{lemma}
\label{lemma:PartialLR}
Functions $L_{1}$, $L_{2}$, $R_{1}$ and $R_{2}$ for all \emph{Q}-nodes of the tree $\mathcal{T}$ can be computed in $\Oh{n \log n}$ time.
\end{lemma}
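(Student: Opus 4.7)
The plan is to mirror the sweep argument used in Lemma~\ref{lemma:PartialFunctions} and compute $L_1, L_2, R_1, R_2$ for each \emph{Q}-node $Q_d$ independently. For a fixed $Q_d$ with sections $S_1, \ldots, S_{k_d}$, I will visit the sections from left to right and maintain the current set $S_a$ of ``active'' vertices in two heaps: a max-heap $H_l$ keyed by $l\brac{v}$, which provides $L_1$, $v_1$ and (after a temporary pop) $L_2$, and a min-heap $H_r$ keyed by $r\brac{v}$, which provides $R_1, R_2$ symmetrically. By Lemma~\ref{lemma:QNodeProperties}c consecutive sections overlap, so the transition from $S_{a-1}$ to $S_a$ only requires inserting the vertices with $l\brac{v} = a$ and removing those with $r\brac{v} = a-1$.

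To enable constant-time access to the lists ``vertices starting at $a$'' and ``vertices ending at $a$'', I first bucket-sort the $n_d$ vertices of $Q_d$ by $l\brac{v}$ and by $r\brac{v}$ in $\Oh{n_d}$ time. During the sweep, removals of non-root elements are handled by lazy deletion: a removed vertex is marked, and each time I read the root I pop off marked elements until a live one surfaces. After the updates for $S_a$, reading the root of $H_l$ yields $L_1\brac{a}$ together with $v_1\brac{a}$; to produce $L_2\brac{a}$, I pop $v_1\brac{a}$, clean and read the next root (returning $1$ when the heap becomes empty, which matches the case $S_a = \set{v_1\brac{a}}$), and then push $v_1\brac{a}$ back. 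The computation of $R_1, R_2$ is entirely symmetric using $H_r$.

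For the running time, each vertex of $Q_d$ is inserted into $H_l$ once and marked for deletion at most once, and every section contributes $\Oh{1}$ additional heap operations for the $L_2$ readout, each costing $\Oh{\log n_d}$. This gives $\Oh{n_d \log n_d}$ per \emph{Q}-node and, since $\sum_d n_d \leq n$, an overall bound of $\Oh{n \log n}$. The only delicate point I expect is the lazy-deletion bookkeeping: I need to ensure that the temporary pop used to read $L_2$ is applied \emph{after} the top of $H_l$ has been cleaned of marked entries, so that what is popped and later reinserted is indeed the current $v_1\brac{a}$ rather than a stale record of a vertex that has already left $S_a$.
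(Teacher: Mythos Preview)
Your proposal is correct and follows essentially the same approach as the paper: the paper's proof simply says to repeat the heap-based sweep of Lemma~\ref{lemma:PartialFunctions} while also extracting the second extremum, which is exactly what you do. Your write-up is considerably more detailed (bucketing by $l,r$, lazy deletion, the temporary pop/push for $L_2$), but these are standard implementation choices compatible with the paper's sketch.
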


\begin{proof}
The proof is very similar to the proof of Lemma \ref{lemma:PartialFunctions}.
Yet again, we use a heap of right or left endpoints,
but now we are not only interested in the minimal element but also in the second one.
\end{proof}


\begin{theorem}
\label{thm:AlgorithmListEdges}
There is an algorithm that for a given \emph{MPQ}-tree $\mathcal{T}$ lists all its interval edges
in \newline$\Oh{\max \set{n+m, n \log n}}$ time.
\end{theorem}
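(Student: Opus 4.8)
The plan is to traverse the tree $\mathcal{T}$ once and, for every vertex $x$ that is over some vertex $y$, decide whether $(x,y)$ is an interval edge using the classification from Section~\ref{sec:IntervalEdges}, charging the work so that the total is $\Oh{\max\set{n+m, n\log n}}$. First I would run the preprocessing: compute the functions $l$, $r$, $f_l$, $f_r$ for all \emph{Q}-nodes in $\Oh{n\log n}$ time (Lemma~\ref{lemma:PartialFunctions}), and the functions $L_1$, $L_2$, $R_1$, $R_2$ for all \emph{Q}-nodes in $\Oh{n\log n}$ time (Lemma~\ref{lemma:PartialLR}); together these let us evaluate $L^{*}(x,a)$ and $R^{*}(x,a)$ in constant time. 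I would also precompute, for every node of $\mathcal{T}$, whether the subtree rooted at it represents a clique, and for every \emph{P}-node whether it is a leaf; these are $\Oh{n}$ with a single bottom-up pass.

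Next I would enumerate the candidate edges grouped by the relative position of $node(x)$ and $node(y)$, matching the case split of Section~\ref{sec:IntervalEdges}. For the two "same node" cases: if $x,y$ lie in a common \emph{P}-node, the edge is an interval edge exactly when that \emph{P}-node is a leaf (Lemmas~\ref{lemma:RemovingEdgePInternal}, \ref{lemma:RemovingEdgePLeaf}), so a leaf \emph{P}-node with $j$ vertices contributes all $\binom{j}{2}$ of its internal edges and every non-leaf \emph{P}-node contributes none; if $x,y$ lie in a common \emph{Q}-node, by Lemmas~\ref{lemma:RemovingEdgeQFail} and \ref{lemma:RemovingEdgeQNode} we need them to share exactly one section $S_i$ whose subtree is a clique — for each section $S_i$ with clique subtree I would look at the (at most two) vertices whose unique shared section is $S_i$ and emit that pair, using $l,r$ to test "exactly one common section." For the "different nodes" case, I would, for each ordered pair with $x$ over $y$, walk the $\tbrac{x,y}$-tree-path and classify it as rotable / almost rotable / neither: this uses the precomputed leaf-flag at $node(y)$, a check that the path does not pass through a central section, and — when the path is almost rotable but starts in an $x$-central section $S_a$ — the constant-time tests of Observations~\ref{obs:NoNeighbour} and \ref{obs:Neighbour} (the "$y$ has/has no neighbor in $T_a$" dichotomy is just whether $T_a$ is empty, already precomputed). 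By Lemmas~\ref{lemma:RemoveEdgePPath}, \ref{lemma:RemoveEdgeInnerSectionNoNeighbour}, \ref{lemma:RemoveEdgeInnerSectionNeighbour}, \ref{lemma:IntervalEdge_QLeaf}, \ref{lemma:IntervalEdge_PNotLeaf}, \ref{lemma:IntervalEdge_PathCentralSection}, \ref{lemma:RemoveEdgeImpossibleYHasEdge}, \ref{lemma:RemoveEdgeImpossibleYHasNoEdge} this decision is correct and complete.

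The complexity analysis is where care is needed, and I expect it to be the main obstacle. Naively, walking a tree-path for each of the $\Om{m}$ candidate edges is too slow, so the path classification must be amortized. The key observations I would exploit: (i) the only per-edge test that is not $\Oh{1}$ from precomputed data is detecting whether the path goes through a central section and where it "starts" — but this depends only on $node(x)$, $node(y)$ and the first two nodes of the path, so for a fixed $x$ all its descendants' paths share the segment from $node(x)$ down one step, and I can process, for each node $v$ of $\mathcal{T}$, all vertices $x$ with $node(x)=v$ together and push information down the subtree once rather than per-edge; (ii) for a fixed \emph{Q}-node $Q$ and a vertex $x\in Q$, the paths that "start in a central section $S_a$" are indexed by the sections $S_a$ containing $x$ and by vertices in $V_a$, and summing over all \emph{Q}-nodes the quantity $\sum_a (|S_a| + |V_a|)$ is $\Oh{n+m}$ since it counts, up to constants, section-memberships and edges; (iii) the leaf-\emph{P}-node contributions and common-\emph{Q}-node-section contributions are each bounded by $\Oh{m}$ because they are subsets of $E(G)$ and each is emitted $\Oh{1}$ times. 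Thus every interval edge is output once and the total bookkeeping is $\Oh{n+m}$ beyond the $\Oh{n\log n}$ preprocessing, giving $\Oh{\max\set{n+m,n\log n}}$. The delicate point to get right in the full proof is the amortization in (i)–(ii): one must phrase the traversal so that a tree-path of length $\ell$ contributing to some edge is not re-walked $\ell$ times, which I would handle by a single DFS of $\mathcal{T}$ that carries down, for each ancestor \emph{Q}-node, the relevant section indices, so that upon reaching $node(y)$ all the needed quantities for every ancestor $x$ over $y$ are available in $\Oh{1}$ per ancestor — and the number of such (ancestor, descendant) pairs that are actually adjacent in $G$ is exactly $m$.
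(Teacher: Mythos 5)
Your overall plan---preprocess the functions from Lemmas~\ref{lemma:PartialFunctions} and \ref{lemma:PartialLR}, handle the two ``same node'' cases directly, and amortize the different-node case over a single traversal of~$\mathcal{T}$---matches the paper's strategy. However, several concrete claims in your write-up are false, and fixing them is where the real work of the proof lies.

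First, the claim that ``the `$y$ has/has no neighbor in $T_a$' dichotomy is just whether $T_a$ is empty'' is wrong. When the $\tbrac{x,y}$-tree-path starts in section $S_a$, the subtree $T_a$ contains $node\brac{y}$ and is therefore \emph{always} nonempty. Whether $y$ has a neighbor inside $T_a$ depends on whether some node strictly between the root of $T_a$ and $node\brac{y}$ (or $node\brac{y}$ itself, beyond $y$) is nonempty; vertices in $T_a$ that hang off a sibling branch of a \emph{P}-node, or off a disjoint range of \emph{Q}-node sections, are \emph{not} neighbors of $y$. This is a per-path flag, not a per-subtree flag, and the paper maintains it incrementally along the traversal rather than precomputing it.

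Second, the claim that detecting ``whether the path goes through a central section \ldots depends only on $node\brac{x}$, $node\brac{y}$ and the first two nodes of the path'' is also wrong. By the paper's definition this is a property of every internal \emph{Q}-node $n_i$ with $1<i<t$: any of them could be entered via a central section. A path $n_1-n_2-n_3-n_4$ where $n_3$ is a \emph{Q}-node and $n_4$ sits in a central section of $n_3$ goes through a central section even though $n_1$, $n_2$, and $n_4$ reveal nothing. So the flag must again be carried along the whole path. Relatedly, the parenthetical ``(at most two) vertices whose unique shared section is $S_i$'' is incorrect: arbitrarily many vertices can have their right endpoint in $S_i$, and arbitrarily many their left endpoint there, so the number of pairs emitted per section is bounded only by $m$ in aggregate.

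Finally, your amortization argument is less careful than the paper's and has a gap. Observing that ``the number of (ancestor, descendant) pairs actually adjacent in $G$ is exactly $m$'' does not by itself bound the traversal cost, because walking a path visits empty \emph{P}-nodes that contribute no adjacent pair. The paper closes this gap with Lemma~\ref{lemma:QNodeProperties}g: since no two consecutive empty \emph{P}-nodes occur, at least half the nodes on any leaf-to-root path are nonempty, each nonempty ancestor of a leaf $L$ yields an edge to a vertex of $L$, and hence the total path length summed over all leaves is $\Oh{n+m}$. You need this (or an equivalent) argument; without it the traversal cost is not bounded. Your point (ii) with the sum $\sum_a\brac{\norm{S_a}+\norm{V_a}}$ is not obviously $\Oh{n+m}$ and in any case is not the quantity you need to bound---it is the path lengths, not the section populations, that govern the work, and those are controlled by Lemma~\ref{lemma:QNodeProperties}g.
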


\begin{proof}
The algorithm is pretty straightforward.
At first, we compute all the functions $l$, $r$, $f_l$, $f_r$, $L_1$, $L_2$, $R_1$ and $R_2$.
%
Then, we inspect all \emph{P}-nodes and all sections of \emph{Q}-nodes listing all edges $\brac{x,y}$
that satisfy the conditions of Lemmas \ref{lemma:RemovingEdgePLeaf} or \ref{lemma:RemovingEdgeQNode}.
For a \emph{P}-node that is a leaf, we list all pairs $\brac{v_a,v_b}$ for $a \neq b$,
while for the section $S_i$ with subtree that is either empty or is a \emph{P}-node with no children,
we list the edges of the form $\brac{v_L,v_R}$,
where $v_L$ is a vertex that has its right endpoint in $S_i$,
and $v_R$ is a vertex that has its left endpoint in $S_i$.
Note that, an \emph{MPQ}-tree has no more than $\Oh{n}$ nodes and sections.
Moreover, we have a constant time access to all vertices $v_L$ and $v_R$.
Thus, this phase works in $\Oh{n+m}$.

Now, we want to find all interval edges $\brac{x,y}$
such that $x$ and $y$ belong to different nodes in $\mathcal{T}$, and $x$ is over $y$.
Lemmas \ref{lemma:IntervalEdge_QLeaf} and \ref{lemma:IntervalEdge_PNotLeaf}
imply that it is enough to consider only those pair of vertices $\brac{x,y}$,
where $y$ belongs to a leaf of $\mathcal{T}$.
Hence, for each leaf $L$ of a tree $\mathcal{T}$,
we traverse a unique path between $L$ and the root of $\mathcal{T}$,
starting from $L$, and listing edges of the form $\brac{x,y}$,
where $x$ belongs to the currently visited node, and $y$ belongs to $L$.
We do not list all such edges, but for each candidate we need to decide whether $\brac{x,y}$ is an interval edge or not.
In order to do it efficiently, we keep two boolean variables:
1. does $y$ have a neighbor in the visited subtree, and
2. does the path go through some central section.
Using those two variables and precomputed functions,
we can decide whether $\brac{x,y}$ is an interval edge in a constant time thanks to
Lemma \ref{lemma:RemoveEdgePPath}, and Observations \ref{obs:NoNeighbour} and \ref{obs:Neighbour}.
Thus, the time spent by the algorithm in this phase is bounded from above by the sum of lengths of all paths we have visited plus the number of tested edges.
Lemma \ref{lemma:QNodeProperties}g, implies that on those paths there are no two consecutive empty \emph{P}-nodes,
so we can bound the sum of lengths of those paths by $\Oh{m}$.
Thus, the time spent by the algorithm in this phase is $\Oh{n+m}$,
and the whole algorithm works in $\Oh{\max\set{n+m,n \log n}}$ time.
\end{proof}

\section{Parent-Child Relationship}
\label{sec:ParentChild}
In this section we present the graph enumeration algorithm.
We define a parent-child relationship in the same way as authors in \cite{Yamazaki19} did using the following lemma.

\begin{lemma}[\cite{Kiyomi06}]
\label{lemma:ParentEdge}
If $G=\brac{V,E}$ is an interval graph which is not a clique,
then there is at least one edge $e \in E$ such that $G + e$ is also an interval graph.
\end{lemma}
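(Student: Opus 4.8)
The plan is to argue geometrically from an interval representation of $G$. Fix intervals $I_v=[a_v,b_v]$ realising $G$ and perturb the endpoints so that all $2n$ of them are distinct. I will locate two non-adjacent vertices $x$ and $y$ whose intervals sit consecutively with no endpoint of any other interval strictly between them, and then lengthen $I_x$ a little to the right so that it comes to meet $I_y$ and nothing else; the modified family is then an interval representation of $G+\brac{x,y}$, which therefore is an interval graph.

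Concretely, I would take $x$ to be a vertex whose interval has the smallest right endpoint $b_x$. Every other vertex $w$ has $b_w>b_x$, so if $w$ is adjacent to $x$ then the overlap of $I_w$ and $I_x=[a_x,b_x]$ forces $a_w<b_x$; hence if $x$ were adjacent to all other vertices, every interval would contain the point $b_x$ and $G$ would be a clique, contrary to hypothesis. So $x$ has a non-neighbour, and any non-neighbour $w$ of $x$ satisfies $a_w>b_x$, i.e.\ $I_w$ lies strictly to the right of $I_x$. Among all non-neighbours of $x$ I would choose $y$ with $a_y$ as small as possible; by this minimality no left endpoint of any interval lies in the open interval $\brac{b_x,a_y}$.

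Next I would modify the representation by replacing $b_x$ with a value $b_x'\in\brac{a_y,c}$, where $c$ is the smallest endpoint larger than $a_y$ (such $c$ exists, for example $c\le b_y$); say $b_x'=(a_y+c)/2$. Stretching $I_x$ only to the right cannot destroy any intersection, so no edge of $G$ is lost. For a new intersection to appear, some $w\neq x$ must have a point of $I_w$ in $\brac{b_x,b_x'}$ while $I_w$ was disjoint from the old $I_x$; as above the latter forces $a_w>b_x$, so in fact $a_w$ itself lies in $\brac{b_x,b_x'}$ (it cannot equal $b_x'$, which is not an endpoint). But $\brac{b_x,b_x'}$ contains no left endpoint other than $a_y$: nothing lies in $\brac{b_x,a_y}$ by the choice of $y$, and nothing lies in $\brac{a_y,b_x'}\subseteq\brac{a_y,c}$ by the choice of $c$. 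Hence $w=y$, so the only edge added is $\brac{x,y}$, and the modified family realises exactly $G+\brac{x,y}$.

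The step I expect to require the most care is precisely this last verification that the single new edge is $\brac{x,y}$: it is a short but fussy case analysis of which endpoints can fall into $\brac{b_x,b_x'}$, resting on the monotonicity of edge addition under right-extension of an interval together with the minimality of $a_y$. If one prefers to sidestep endpoint bookkeeping, there is an equivalent combinatorial route: take a consecutive linear ordering $C_1,\dots,C_k$ of the maximal cliques of $G$ (with $k\geq 2$ since $G$ is not a clique), pick $y\in C_k\setminus C_{k-1}$ and $x\in C_{k-1}\setminus C_k$ (both nonempty by maximality), observe that $x$ and $y$ share no clique and hence are non-adjacent, and insert the clique $\brac{C_{k-1}\cap C_k}\cup\set{x,y}$ between $C_{k-1}$ and $C_k$; one then checks that the cliques containing any fixed vertex stay consecutive and that the only pair newly forced together is $\set{x,y}$, so this ordering certifies that $G+\brac{x,y}$ is an interval graph.
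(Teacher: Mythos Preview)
Your argument is correct. In the paper this lemma is only cited from~\cite{Kiyomi06} and not proved, but essentially the same construction reappears inside the proof of Theorem~\ref{thm:AlgorithmParent}: locate a vertex with an early right endpoint, take the first left endpoint of a non-neighbour lying to its right, and push the right endpoint just past it. There the verification that exactly one edge is added is delegated to the swapping Lemma~\ref{lemma:SwappingLemma} in the string-representation language, whereas you carry out the equivalent endpoint bookkeeping directly on real intervals; the two are the same idea in different coordinates.

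Your alternative clique-sequence route is also sound---it amounts to inserting one extra bag into the clique path---though note that the inserted set $\brac{C_{k-1}\cap C_k}\cup\set{x,y}$ need not be a \emph{maximal} clique of $G+\brac{x,y}$. That does not matter: for the interval property it suffices that each bag is a clique, every edge lies in some bag, and every vertex occurs in a consecutive run of bags, all of which you have checked.
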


\begin{theorem}[\cite{Yamazaki19} Thm. 5]
\label{thm:YamazakiParent}
Let $G = \brac{V,E}$ be any interval graph. Then its parent can be computed in $\Oh{n+m}$ time.
\end{theorem}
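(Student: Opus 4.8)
The statement is quoted from \cite{Yamazaki19}, so the plan is to reconstruct their argument in the present terminology. Recall first how the parent is defined: for an interval graph $G$ that is not a clique, $par\brac{G} = G + e^{\ast}$, where $e^{\ast}$ is a \emph{canonically chosen} non-edge of $G$ whose addition keeps $G$ an interval graph. Lemma \ref{lemma:ParentEdge} guarantees that at least one such non-edge exists, and when $G$ is a clique it is the root $G_1$ and has no parent, so nothing is computed. Since the vertex set is fixed as $\set{1,\ldots,n}$, it suffices to fix any rule computable from $G$ alone; for concreteness take $e^{\ast}$ to be the lexicographically smallest non-edge $\brac{i,j}$ with $G+\brac{i,j}$ an interval graph. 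The task is then to locate $e^{\ast}$ in $\Oh{n+m}$ time, and the obvious route — test all $\Om{n^2}$ non-edges with a linear recognition routine — is far too slow.

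First I would compute, in $\Oh{n+m}$ time, a \emph{PQ}-tree $T$ of $G$ by the Booth--Lueker recognition algorithm \cite{Lueker79} (equivalently an \emph{MPQ}-tree via \cite{Korte89}); $T$ encodes all normalized interval representations of $G$, and from $T$ one reads a canonical representative (say, the leftmost frontier) in linear time, so we avoid the $\Oh{n\log n}$ cost of computing a canonical \emph{MPQ}-tree (Theorem \ref{thm:MPQTreeCanonicalForm}). Next I would characterize the \emph{addable} non-edges, i.e.\ the pairs $\brac{i,j}\notin E\brac{G}$ with $G+\brac{i,j}$ an interval graph. This is the dual of the interval-edge characterization of Section \ref{sec:IntervalEdges}, and it is easier, because for the parent only one edge — not the whole list — is needed: roughly, $\brac{i,j}$ is addable when some rotation of $T$ makes the maximal cliques containing $i$ immediately precede those containing $j$, so that extending the interval of $i$ just past the left endpoint of the interval of $j$ inserts the single edge $\brac{i,j}$ and nothing else; whether this is possible depends only on the relative position of $node\brac{i}$ and $node\brac{j}$ in $T$, together with the section and endpoint data supplied by Lemma \ref{lemma:QNodeProperties}.

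Then, to find $e^{\ast}$ without enumerating all non-edges, I would scan $i = 1,2,\ldots$ in increasing order, and for each $i$ read off from $T$ the structured set of $j$ for which $\brac{i,j}$ is addable, keeping the smallest such $j$; with $\Oh{1}$ amortized tree bookkeeping plus time proportional to the number of neighbours of $i$ to skip over the current edges at $i$, summing over $i$ gives $\Oh{n+m}$. Once $e^{\ast}=\brac{i,j}$ is identified, the parent $G+e^{\ast}$ is output simply by adding the pair to $E\brac{G}$ (and, if its representation is needed downstream, by performing the corresponding local rotation and endpoint extension in $T$ in $\Oh{n}$ time).

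The main obstacle is exactly the linear-time search: one must prove that the addable non-edges incident to a fixed vertex — or, more conveniently, to a fixed node of $T$ — admit a compact description that can be scanned in total time $\Oh{n+m}$ rather than $\Om{n^2}$, and that the analogues of the corner cases from Section \ref{sec:IntervalEdges} (\emph{Q}-nodes, empty \emph{P}-nodes, and the configurations ruled out by Lemma \ref{lemma:QNodeProperties}g--i) do not spoil this bound. The correctness of the addable-edge characterization via \emph{PQ}-tree rotations is the delicate point; the recognition step, the scan, and the final edge insertion are routine linear-time manipulations.
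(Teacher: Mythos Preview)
The paper does not supply its own proof of Theorem~\ref{thm:YamazakiParent}; the result is quoted from \cite{Yamazaki19} and used as a black box. What the paper \emph{does} prove is the neighbouring Theorem~\ref{thm:AlgorithmParent}, which assumes a \emph{canonical} \emph{MPQ}-tree of $G$ is already in hand and then locates the parent edge in $\Oh{n}$ time. That argument is far simpler than what you sketch: in the canonical string $\mathcal{S}$ the universal vertices occupy a prefix $1,2,\ldots,j$ (matched by the suffix $j,\ldots,2,1$), so the lexicographically smallest non-edge must have first coordinate $j+1$; its second coordinate $y$ is simply the first left endpoint in $\mathcal{S}$ occurring after the right endpoint of $j+1$, and the swapping lemma (Lemma~\ref{lemma:SwappingLemma}) shows that sliding the right endpoint of $j+1$ past that position adds exactly the edge $\brac{j+1,y}$. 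No characterization of all addable non-edges is needed, and no scan over $i$ is performed. The price is that the canonical tree must be available, which is why the paper states this as a separate theorem rather than as a proof of Theorem~\ref{thm:YamazakiParent}.

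Your plan targets the original Yamazaki setting (arbitrary vertex labels, no canonical tree), where the shortcut above is not directly available because the lex-smallest label need not sit at a structurally convenient position in the tree. The outline is reasonable, but as written it is a plan rather than a proof: the two items you yourself flag as obstacles --- the precise \emph{PQ}-tree characterization of addable non-edges, and the claim that the minimum addable $j$ for each $i$ can be extracted in amortized $\Oh{1+\deg\brac{i}}$ time --- are exactly the substance of the theorem, and you have not carried them out. In particular, the non-neighbours of $i$ can number $\Om{n}$, so ``skip over the current edges at $i$'' does not by itself bound the work; you would need a structural lemma to the effect that the addable non-edges incident to $i$ form a constant number of contiguous blocks in some precomputed order, and you have asserted this without argument.
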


Yamazaki et al. used Lemma \ref{lemma:ParentEdge} and defined the parent of $G$ to be a graph $G + e$
such that $G + e$ is an interval graph and $e$ is lexicographically the smallest possible.
They proved that for every interval graph $G$ its parent can be computed in $\Oh{n+m}$ time - see Theorem \ref{thm:YamazakiParent}.
Thanks to the fact that we work with \emph{MPQ}-trees and string representations,
we are able to get rid of the $\Oh{m}$ factor using the algorithm from Theorem \ref{thm:LinearMPQTree}.

\begin{theorem}
\label{thm:AlgorithmParent}
There is a linear time algorithm that for every canonical \emph{MPQ}-tree $\mathcal{T}$ representing an interval graph $G$ that is not a clique,
produces a string representation $\mathcal{S}'$ of a graph $G + e$, where $e$ is the lexicographically smallest edge.
\end{theorem}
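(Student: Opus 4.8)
The plan is to prove and then use the ``insertion'' counterpart of the classification of Section~\ref{sec:IntervalEdges}. There, for an interval graph $G$ with \emph{MPQ}-tree $\mathcal{T}$, we described which edges can be deleted while staying in the class and, for each, produced a string representation of the smaller graph in linear time; here I would establish the symmetric fact: given $\mathcal{T}$, characterise which non-edges $\brac{x,y}$ of $G$ can be \emph{added} so that $G+\brac{x,y}$ is still an interval graph, and for such a pair output a string representation $\mathcal{S}'$ of $G+\brac{x,y}$ in $\Oh{n}$ time. The engine is again Lemma~\ref{lemma:SwappingLemma}: if $\mathcal{S}$ is any string representation equivalent to $\mathcal{T}$ in which the interval of $x$ lies entirely to the left of the interval of $y$, then $\brac{x,y}$ can be turned into an edge and nothing else changed precisely when the part of $\mathcal{S}$ strictly between $second\brac{x}$ and $first\brac{y}$ consists of right endpoints only --- slide $second\brac{x}$ rightwards (each move is a right/right swap, harmless by Lemma~\ref{lemma:SwappingLemma}) and finish with the single right/left swap against $first\brac{y}$ --- or of left endpoints only, sliding $first\brac{y}$ leftwards symmetrically. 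Reading this condition off $\mathcal{T}$ gives a short list of tree configurations, organised as in Section~\ref{sec:IntervalEdges} by whether $node\brac{x}$ and $node\brac{y}$ coincide, lie in a common \emph{Q}-node, or are joined by a tree path of a prescribed shape; in the addable configurations the very manipulations of Lemmas~\ref{lemma:RemovingEdgePLeaf}, \ref{lemma:RemovingEdgeQNode} and \ref{lemma:RemoveEdgePPath} (a bounded number of subtree rotations plus guarded swaps) produce $\mathcal{S}'$ in linear time, and in the non-addable ones one exhibits an induced $C_4$ or an asteroidal triple in $G+\brac{x,y}$ exactly as on the deletion side.

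With the characterisation available, I would compute the parent by locating the lexicographically smallest \emph{addable} non-edge $e=\brac{x,y}$ and then invoking the construction above. First precompute, for all \emph{Q}-nodes, the arrays $l$, $r$, $f_l$, $f_r$ and the one-dimensional surrogates of $L^{*}$, $R^{*}$ from Section~\ref{sec:Listing}; since for the parent we never need to examine more than the single \emph{Q}-node containing $node\brac{x}$ together with a constant neighbourhood of $node\brac{y}$, these can be produced in $\Oh{n}$ rather than the $\Oh{n\log n}$ of Lemma~\ref{lemma:PartialFunctions}. Then scan vertices $x=1,2,\dots$; the characterisation shows that the set of addable partners of a fixed $x$ is governed by a local piece of $\mathcal{T}$ around $node\brac{x}$ (essentially its nearest sibling subtree or a neighbouring section of the enclosing \emph{Q}-node), so ``does $x$ have an addable partner'' is a constant-size test, and a localised form of Lemma~\ref{lemma:ParentEdge} should give that the first non-universal $x$ always has one, fixing the first coordinate of $e$. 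For that single $x$, scan its potential partners in increasing label order, testing each in $\Oh{1}$ with the precomputed data, and return the first addable $y$. Emitting $\mathcal{S}'$ for this pair is $\Oh{n}$ by the previous paragraph, so the whole computation is linear, improving Theorem~\ref{thm:YamazakiParent}.

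I expect the main obstacle to be the first paragraph: making the insertion classification exhaustive and correct --- assigning every non-edge to one of the tree configurations and, for the addable ones, writing down the guarded swap/rotation sequence and checking via Lemma~\ref{lemma:SwappingLemma} and the structural constraints of Lemma~\ref{lemma:QNodeProperties} that it adds $\brac{x,y}$ and nothing else --- with the non-addable cases needing explicit forbidden substructures. A secondary but genuine point is the linear-time search: $G$ may have $\Om{n^2}$ non-edges, so one must argue that the lexicographically smallest addable one is pinned down after inspecting only $\Oh{n}$ vertex labels and $\Oh{n}$ worth of tree data, and in particular that it is safe to commit to the first non-universal vertex and never backtrack --- precisely the place where the localised version of Lemma~\ref{lemma:ParentEdge} is needed.
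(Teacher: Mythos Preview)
Your plan would work in principle, but it is far more elaborate than what the theorem needs. The paper's proof sidesteps the entire insertion-side classification by exploiting one property of the \emph{canonical} string $\mathcal{S}$: its labels are assigned by order of first appearance, so $first\brac{1}<first\brac{2}<\cdots<first\brac{n}$. From this it follows that the lexicographically smallest non-edge of $G$ is already addable, and there is nothing to classify or search.

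Concretely, the paper finds the longest prefix $12\ldots j$ of $\mathcal{S}$ such that $j\ldots 21$ is a suffix; these $j$ vertices are exactly the universal ones, so the first coordinate of the smallest non-edge is $x=j{+}1$. Let $y$ be the vertex whose left endpoint is the first left endpoint to the right of $second\brac{j{+}1}$ in $\mathcal{S}$. Every $k<y$ with $k\neq j{+}1$ has $first\brac{k}<first\brac{y}$ and hence $first\brac{k}<second\brac{j{+}1}$, so $\brac{j{+}1,k}$ is already an edge; thus $y$ is the smallest non-neighbour of $j{+}1$ and $\brac{j{+}1,y}$ is the lexicographically smallest non-edge of $G$ outright. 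Because between $second\brac{j{+}1}$ and $first\brac{y}$ there are only right endpoints, sliding $second\brac{j{+}1}$ rightward is a sequence of right/right swaps (harmless by Lemma~\ref{lemma:SwappingLemma}) followed by a single right/left swap that inserts exactly the edge $\brac{j{+}1,y}$. The whole procedure is one linear scan of $\mathcal{S}$.

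So the ``localised form of Lemma~\ref{lemma:ParentEdge}'' you allude to is in fact the entire proof: not only does the first non-universal vertex have an addable partner, its \emph{smallest} non-neighbour is that partner, and the canonical string already places them so that the swap is immediate. Your programme of building a full insertion analogue of Section~\ref{sec:IntervalEdges} might be of independent interest, but for this theorem it is unnecessary, and the difficulties you flag (an exhaustive case analysis on tree configurations, $\Oh{1}$ addability tests, avoiding an $\Om{n^2}$ scan, the auxiliary arrays of Section~\ref{sec:Listing}) simply do not arise.
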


\begin{proof}
Let $\mathcal{S}$ be the canonical string for $\mathcal{T}$.
Let $12...j$ be the longest prefix of $\mathcal{S}$ such that $j...21$ is a suffix of $\mathcal{S}$.
Clearly, such prefix can be found in $\Oh{n}$ time.
Moreover, all vertices $1,\ldots,j$ have degree $n-1$ in the graph $G$.
Hence there is no pair $\brac{x,y} \notin E$ such that $x \in \set{1,\ldots,j}$.
We prove that there is a pair $\brac{j+1,y}$ for some $y \geq j+2$ such that $G + \brac{j+1,y}$ is an interval graph.
Let $y$ be the leftmost endpoint of an interval that is to the right of the right endpoint of $j+1$.
If such $y$ does not exist, then to the right of the right endpoint of $j+1$ there are only right endpoints.
Hence, $G$ is a clique and we reached a contradiction.
Thus, $y$ exists, $\brac{j+1,k}$ is an edge for all $k \leq y$, and there is no edge between $j+1$ and $y$.
So, $G + \brac{j+1,y}$ is the parent of $G$.
In order to produce a string representation for the graph $G + \brac{j+1,y}$,
we move the right endpoint of $j+1$ to the right until it passes the left endpoint of $y$.
Lemma \ref{lemma:SwappingLemma} guarantees that no edge except $\brac{j+1,y}$ was added.
\end{proof}

Finally, we take all the pieces together and present our main result.

\begin{theorem}
Let $\mathcal{T}$ be a canonical \emph{MPQ}-tree representing a non-empty ($m \geq 1$) interval graph $G$.
The set of canonical trees for children of $G$ in the family tree $\mathcal{F}_{n}$ can be computed in $\Oh{nm \log n}$ time.
\end{theorem}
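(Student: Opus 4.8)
The plan is to realize each child of $G$ as the canonicalization of $G - e$ for a suitable interval edge $e$, and to detect which $e$ work by a parent test. Recall that in the family tree the parent map raises the number of edges by exactly one, $par(H) = H + e'$ for a single edge $e'$; hence if $H$ is a child of $G$ then $H \cong G - e$ for some edge $e$ of $G$, and since $H$ is an interval graph, $e$ must be an interval edge of $G$. In particular $G$ has at most $m$ children, and each of them is the canonical \emph{MPQ}-tree of $G - e$ for some interval edge $e$. Note also that $G - e$ is never a complete graph (that would force $m = n(n-1)/2 + 1$) and is never the root of $\mathcal{F}_n$, so $par\brac{G-e}$ is well defined and computable by Theorem \ref{thm:AlgorithmParent}.

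Accordingly, I would proceed in three stages. First, run the algorithm of Theorem \ref{thm:AlgorithmListEdges} on $\mathcal{T}$ to list all interval edges of $G$ in $\Oh{\max\set{n+m, n\log n}}$ time (at most $m$ of them), and compute once the canonical string of $G$ from $\mathcal{T}$ in $\Oh{n}$ time. Second, for each interval edge $e$: apply Theorem \ref{thm:AlgorithmStringRepresentation} to produce a string representation of $G - e$; convert it into an \emph{MPQ}-tree by the corollary of Theorem \ref{thm:LinearMPQTree}; and canonicalize it by Theorem \ref{thm:MPQTreeCanonicalForm}, obtaining the canonical tree $\mathcal{T}_e$ and canonical string $\mathcal{S}_e$ of $G - e$. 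To test whether $G - e$ is a child of $G$, apply Theorem \ref{thm:AlgorithmParent} to $\mathcal{T}_e$ to get a string representation of $par\brac{G-e}$, canonicalize it in the same way, and compare its canonical string with that of $G$ (Theorem \ref{thm:TreeIsomorphism} makes this comparison a valid isomorphism test); keep $\mathcal{T}_e$ exactly when the two strings agree. Each interval edge costs $\Oh{n \log n}$, so this stage runs in $\Oh{nm \log n}$ time. Third, since distinct interval edges may yield isomorphic graphs $G - e$, sort the at most $m$ surviving canonical strings (each of length $2n$, with $m = \Oh{n^2}$) and delete repeats; a comparison sort costs $\Oh{nm \log m} = \Oh{nm \log n}$. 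What remains is exactly the set of canonical trees of the children of $G$, and the total running time is $\Oh{nm \log n}$, the first stage being dominated for $m \ge 1$.

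The individual steps are routine given the earlier results; the parts that require care are the structural claim that the children of $G$ are precisely the canonicalizations of interval-edge deletions that pass the parent test --- in particular, verifying that both $par$ and the test descend correctly to isomorphism classes via the canonical form, so that no child is missed and no spurious one is produced --- and the bookkeeping ensuring that all per-edge operations and the final deduplication fit within $\Oh{nm \log n}$.
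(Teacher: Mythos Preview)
Your proposal is correct and follows essentially the same approach as the paper: enumerate interval edges, for each one build and canonicalize $G-e$, test whether its parent's canonical string equals that of $G$, and then deduplicate. The only cosmetic differences are that the paper deduplicates with a trie rather than a comparison sort and performs deduplication before the parent test rather than after; neither changes the argument or the $\Oh{nm\log n}$ bound.
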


\begin{proof}
At first, we list all interval edges for the tree $\mathcal{T}$ in $\Oh{\max\set{n+m, n \log n}}$ time, see Theorem \ref{thm:AlgorithmListEdges}.
Then, for each interval edge $e$, we produce a string representation $\mathcal{S}'$ of $G - e$ in linear time,
build \emph{MPQ}-tree $\mathcal{T}'$ for the interval graph represented by $\mathcal{S}'$ in $\Oh{n}$,
and finally compute the canonical form of $\mathcal{T}'$ in $\Oh{n \log n}$ time,
see Theorems \ref{thm:AlgorithmStringRepresentation}, \ref{thm:LinearMPQTree}, and \ref{thm:MPQTreeCanonicalForm}.
Hence, we compute all canonical strings for children candidates in $\Oh{nm \log n}$ time.
Theorem \ref{thm:TreeIsomorphism} implies that in order to check isomorphism, it is enough to remove duplicates from this set.
It can be easily done by storing all computed strings in a trie.
Finally, we need to filter out those graphs for which $G$ is not a parent.
For each candidate canonical string, we compute its parent string in $\Oh{n}$ time, see Theorem \ref{thm:AlgorithmParent},
build a canonical \emph{MPQ}-tree in $\Oh{n \log n}$ time and check whether its canonical string equals with a canonical string for the graph $G$ in $\Oh{n}$ time.
Thus, we filter out non-children in $\Oh{nm \log n}$ time, and the whole process takes $\Oh{nm \log n}$ time.
\end{proof}

\section{Performance}

In the previous section we presented theoretical analysis of our enumeration algorithm.
In this section we present two lemmas, that helped us to significantly speedup the execution of the algorithm.
Unfortunately, presented tricks do not improve the worst-case time delay,
and it is quite easy to show an \emph{MPQ}-tree that even with those tricks take $\Oh{nm \log n}$ time to process.
We leave it as an exercise.

\begin{lemma}
Let $v_1 \neq v_2$ be two vertices of the graph $G = \brac{V,E}$.
If $N\brac{v_1} \setminus \set{v_2} = N\brac{v_2} \setminus \set{v_1}$,
then for every $y \notin \set{v_1,v_2}$ graphs $G - \brac{v_1,y}$ and $G - \brac{v_2,y}$ are isomorphic.
\end{lemma}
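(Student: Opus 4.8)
The plan is to exhibit an explicit isomorphism, namely the transposition of $v_1$ and $v_2$. Define $\tau \colon V \to V$ by $\tau\brac{v_1} = v_2$, $\tau\brac{v_2} = v_1$, and $\tau\brac{w} = w$ for every $w \notin \set{v_1,v_2}$. The first step is to check that $\tau$ is an automorphism of $G$. Any edge with both endpoints outside $\set{v_1,v_2}$ is fixed by $\tau$, hence preserved. The set $\set{v_1,v_2}$ is mapped onto itself, so the pair $\brac{v_1,v_2}$ is an edge of $G$ if and only if its image is. Finally, for $w \notin \set{v_1,v_2}$ we have $w \neq v_2$, so $w \in N\brac{v_1}$ iff $w \in N\brac{v_1} \setminus \set{v_2}$; by hypothesis this equals $N\brac{v_2} \setminus \set{v_1}$, and since $w \neq v_1$ this is equivalent to $w \in N\brac{v_2}$. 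Thus $\brac{v_1,w} \in E \iff \brac{v_2,w} = \brac{\tau\brac{v_1},\tau\brac{w}} \in E$, and symmetrically for $\brac{v_2,w}$. Hence $\tau$ preserves adjacency in every case and is an automorphism of $G$.

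The second step uses $y \notin \set{v_1,v_2}$, which gives $\tau\brac{y} = y$, so $\tau$ sends the pair $\brac{v_1,y}$ to $\brac{v_2,y}$ and vice versa. Since $\tau$ is a bijection of $V$ with $\tau\brac{E} = E$, it maps $E \setminus \set{\brac{v_1,y}}$ bijectively onto $E \setminus \set{\brac{v_2,y}}$; that is, $\tau$ is an isomorphism from $G - \brac{v_1,y}$ to $G - \brac{v_2,y}$, which is what we want. One should note that, because $y \neq v_1$ and $y \neq v_2$, the neighborhood hypothesis also gives that $\brac{v_1,y}$ is an edge precisely when $\brac{v_2,y}$ is, so the argument is uniform: if both are non-edges then $G - \brac{v_1,y} = G = G - \brac{v_2,y}$ and the claim is trivial, and otherwise the above applies.

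There is essentially no difficult step: the hypothesis $N\brac{v_1} \setminus \set{v_2} = N\brac{v_2} \setminus \set{v_1}$ is exactly the statement that $v_1$ and $v_2$ are twins, so the transposition swapping them is an automorphism, and removing the ``parallel'' edges $\brac{v_1,y}$ and $\brac{v_2,y}$ respects this symmetry. The only point that needs a little attention is to handle the edge $\brac{v_1,v_2}$ and the non-edge case correctly, and both are immediate once one observes that $\tau$ fixes the set $\set{v_1,v_2}$ and fixes $y$.
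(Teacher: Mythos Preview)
Your proof is correct and follows exactly the same approach as the paper: define the transposition $\tau$ swapping $v_1$ and $v_2$ and fixing all other vertices, and observe that it is an isomorphism from $G-\brac{v_1,y}$ to $G-\brac{v_2,y}$. The paper states this in a single sentence without verifying the details, whereas you spell them out, but the idea is identical.
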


\begin{proof}
One can easily check that the function $f: V \rightarrow V$ such that $f\brac{v_1} = v_2$, $f\brac{v_2} = v_1$ and $f$ identifies other vertices,
encodes an isomorphism between $G - \brac{v_1,y}$ and $G - \brac{v_2,y}$.
\end{proof}

A consequence of the above lemma is the fact that if a \emph{P}-node contains more than one vertex namely $v_1,\ldots,v_j$,
then graphs $G - \brac{v_1,y}$ and $G - \brac{v_a,y}$ for every $a > 1$ are isomorphic.
Thus, when listing potential candidates for the children of the graph $G$,
we may omit all edges of the form $\brac{v_a,y}$ for $a > 1$ and list only the edges of the form $\brac{v_1,y}$.
The same argument applies to a \emph{Q}-node and vertices that belong to the same sections.

\begin{lemma}
Let $v_1,\ldots,v_k$ be a subset of vertices of the graph $G = \brac{V,E}$
such that $\forall_{i \neq j}: N\brac{v_i} \setminus \set{v_j} = N\brac{v_j} \setminus \set{v_i}$,
and $v_1,\ldots,v_k$ form a clique in $G$.
All graphs $G_{i,j} = G - \brac{v_i,v_j}$ are pairwise isomorphic.
\end{lemma}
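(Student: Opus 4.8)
The plan is to exhibit, for any two clique edges $\brac{v_i,v_j}$ and $\brac{v_{i'},v_{j'}}$, an automorphism of $G$ that carries the first edge onto the second; such an automorphism restricts to an isomorphism between $G_{i,j}$ and $G_{i',j'}$, and since isomorphism is transitive this proves that all the $G_{i,j}$ are pairwise isomorphic.

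First I would prove the key claim that every permutation $\pi$ of the set $\set{v_1,\ldots,v_k}$, extended by the identity on $V \setminus \set{v_1,\ldots,v_k}$, is an automorphism of $G$. It suffices to check, for an arbitrary pair of vertices $a,b$, that $\brac{a,b} \in E$ iff $\brac{\pi\brac{a},\pi\brac{b}} \in E$, and I would split into three cases. If $a,b \in \set{v_1,\ldots,v_k}$, then both $\brac{a,b}$ and $\brac{\pi\brac{a},\pi\brac{b}}$ are edges, since $v_1,\ldots,v_k$ form a clique. If $a = v_i$ and $b \notin \set{v_1,\ldots,v_k}$, pick any index $\ell \neq i$; then $b \in N\brac{v_i}$ iff $b \in N\brac{v_i}\setminus\set{v_\ell} = N\brac{v_\ell}\setminus\set{v_i}$ iff $b \in N\brac{v_\ell}$, so adjacency of $b$ to a vertex of $\set{v_1,\ldots,v_k}$ does not depend on which one, and since $\pi\brac{v_i} \in \set{v_1,\ldots,v_k}$ and $\pi\brac{b} = b$ we get $\brac{v_i,b}\in E$ iff $\brac{\pi\brac{v_i},\pi\brac{b}}\in E$. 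Finally, if neither $a$ nor $b$ lies in $\set{v_1,\ldots,v_k}$, then $\pi$ fixes both and there is nothing to check.

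Next, given indices $i \neq j$ and $i' \neq j'$ (so $k \geq 2$; for $k \leq 1$ there are no such edges and the statement is vacuous), I would choose a permutation $\pi$ of $\set{v_1,\ldots,v_k}$ with $\pi\brac{v_i} = v_{i'}$ and $\pi\brac{v_j} = v_{j'}$, which exists because $\set{v_i,v_j}$ and $\set{v_{i'},v_{j'}}$ are two-element subsets of a $k$-element set and the map can be extended arbitrarily on the remaining $k-2$ vertices. Extending $\pi$ by the identity yields an automorphism $f$ of $G$ by the previous step, and $f$ maps the edge $\brac{v_i,v_j}$ precisely onto the edge $\brac{v_{i'},v_{j'}}$. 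Hence for every pair $a,b$ we have $\brac{a,b} \in E\brac{G_{i,j}}$ iff $\brac{a,b}\in E$ and $\set{a,b} \neq \set{v_i,v_j}$, iff $\brac{f\brac{a},f\brac{b}} \in E$ and $\set{f\brac{a},f\brac{b}} \neq \set{v_{i'},v_{j'}}$, iff $\brac{f\brac{a},f\brac{b}} \in E\brac{G_{i',j'}}$; so $f$ is an isomorphism from $G_{i,j}$ onto $G_{i',j'}$.

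There is no genuine obstacle here; the only step requiring any care is the verification that the extended permutation is an automorphism, specifically the case $a = v_i$, $b \notin \set{v_1,\ldots,v_k}$, where one invokes the twin hypothesis through a third index $\ell$. In essence this is just the natural upgrade of the preceding lemma from a single transposition to an arbitrary permutation of a set of mutually adjacent twins.
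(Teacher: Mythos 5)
Your proof is correct and follows essentially the same approach as the paper's: exhibit a permutation of $\set{v_1,\ldots,v_k}$ (extended by the identity elsewhere) that is an automorphism of $G$ carrying $\set{v_i,v_j}$ onto $\set{v_{i'},v_{j'}}$, and observe that it restricts to an isomorphism of the edge-deleted graphs. You are somewhat more careful than the paper's very terse sketch — in particular, the paper describes $f$ as mapping $v_i\mapsto v_{i'}$, $v_j\mapsto v_{j'}$ and fixing all other vertices, which taken literally need not be a bijection; your choice of a genuine permutation of the twin set, together with the explicit verification that any such permutation is an automorphism, fills that gap.
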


\begin{proof}
Consider two graphs $G_{i,j}$ and $G_{i',j'}$ and a function $f: V \rightarrow V$ such that
$f\brac{v_i} = v_{i'}$, $f\brac{v_j} = v_{j'}$. For the other vertices $f$ is an identity.
\end{proof}

A consequence of this lemma is the fact that if a \emph{P}-node contains more than $2$ vertices,
then we may omit all the edges between them, except $\brac{v_1,v_2}$.
The same applies to the vertices that occupy the same sections of a \emph{Q}-node.
Both presented optimizations do not improve the worst case complexity of our algorithm, but significantly speed it up.

\medskip
We implemented the proposed algorithms, and generated all non-isomorphic interval graphs on $n$ vertices for all $n \in \set{1,\ldots,15}$.
The results are available at \url{https://patrykmikos.staff.tcs.uj.edu.pl/graphs/}.

\nocite{*}
\bibliographystyle{abbrvnat}
\bibliography{mikos-dmtcs-episciences}
\label{sec:biblio}

\end{document}